\newcounter{thmctr}
\newtheorem{thm}[thmctr]{Theorem}
\newtheorem{lemma}[thmctr]{Lemma}
\newtheorem{prop}[thmctr]{Proposition}
\newtheorem{cor}[thmctr]{Corollary}
\newtheorem{conj}[thmctr]{Conjecture}
\newcommand{\dhruvuni}{University of Illinois at Chicago \\ mubayi@math.uic.edu}
\newcommand{\johnuni}{University of Illinois at Chicago \\ lenz@math.uic.edu}
\newcommand{\dhruvfoot}{\footnote{Research supported in part by  NSF Grants 0969092 and 1300138.}}
\newcommand{\johnfoot}{\footnote{Research partly supported by NSA Grant H98230-13-1-0224.}}
    \newenvironment{notarxiv}{\comment}{\endcomment}
    \newenvironment{onlyarxiv}{}{}
    \newenvironment{notarxiv}{}{}
    \newenvironment{onlyarxiv}{\comment}{\endcomment}
\title{Multicolor Ramsey Numbers for Complete Bipartite Versus Complete Graphs}
\author{John Lenz \johnfoot \\ \johnuni \and Dhruv Mubayi \dhruvfoot \\ \dhruvuni}
\begin{document}
\maketitle
\begin{abstract}
  Let $H_1, \ldots, H_{k}$ be graphs.  The multicolor Ramsey number
  $r(H_1,\ldots,H_{k})$ is the minimum integer $r$ such that in every edge-coloring of $K_r$ by
  $k$ colors, there is a monochromatic copy of $H_i$ in color $i$ for some $1 \leq i \leq k$.
  In this paper, we investigate the multicolor Ramsey number $r(K_{2,t},\ldots,K_{2,t},K_m)$,
  determining the asymptotic behavior up to a polylogarithmic factor for almost all ranges of $t$ and $m$.
  Several different constructions are used for the lower bounds, including the random graph and
  explicit graphs built from finite fields.  A technique of Alon and R\"odl using the probabilistic
  method and spectral arguments is employed to supply tight lower bounds.  A sample result is
  $$c_1 \frac{m^2t}{\log^4(mt)} \leq r(K_{2,t},K_{2,t},K_m) \leq c_2 \frac{m^2t}{\log^2 m}$$ for any
  $t$ and $m$, where $c_1$ and $c_2$ are absolute constants.

  \medskip

  Keywords: Ramsey Theory, Graph Eigenvalues, Graph Spectrum
\end{abstract}

\section{Introduction} 

The multicolor Ramsey number $r(H_1,\dots,H_{k})$ is the minimum integer $r$ such that in every
edge-coloring of $K_r$ by $k$ colors, there is a monochromatic copy of $H_i$ in color $i$ for some
$1 \leq i \leq k$.  Ramsey's famous theorem~\cite{ram-ramsey30} states that $r(K_s,K_t) < \infty$
for all $s$ and $t$.  Determining these numbers is usually a very difficult problem.  Even
determining the asymptotic behavior is difficult; there are only a few infinite families of graphs
where the order of magnitude is known.  A famous example is $r(K_3,K_m) = \Theta(m^2/\log m)$, where
the upper bound was proved by Ajtai, Koml\'os, and Szemer\'edi~\cite{ram-ajtai80} and the lower
bound by Kim~\cite{ram-kim95}.

For more colors, in 1980 Erd\H{o}s and S\'os~\cite{ram-erdos80} conjectured
that $r(K_3,K_3,K_m)/r(K_3,K_m) \rightarrow \infty$ as 
$m \rightarrow \infty$.  This conjecture was open for 25 years until it was proved true
by Alon and R\"odl~\cite{mr-alon05}.  In their paper, they provided a general technique using
graph eigenvalues and the probabilistic method which provides good
estimates on multicolor Ramsey numbers.  This breakthrough provided the first
sharp asymptotic (up to a poly-log factor) bounds on infinite families of multicolor Ramsey numbers with at
least three colors.

The exact results proved by Alon and R\"odl~\cite{mr-alon05} are shown in Table~\ref{alontable}.
For $k \geq 1$, define $r_k(H;G)$ to be $r(H,\dots,H,G)$, where $H$ is repeated $k$ times.  In other
words, $r_k(H;G)$ is the minimum integer $r$ such that in every edge-coloring of $K_r$ by $k+1$
colors, there is a monochromatic copy of $H$ in one of the first $k$ colors or a copy of $G$ in the
$k+1$st color.  In Table~\ref{alontable}, $s$ and $t$ are fixed with $t \geq (s-1)! + 1$, $\delta >
0$ is any positive constant, and $m$ is going to infinity.  Also, in the tables below, $a \ll b$
means there exists some positive constant $c$ such that $a \leq c b$.
All logarithms in this paper are base $e$.

\begin{table}
  \newcommand\T{\rule{0pt}{3ex}}
  \newcommand\B{\rule[-1.8ex]{0pt}{0pt}}
  \begin{center}
    \begin{tabular}{| c | c | c |} 
      \hline
      $H$ & $k = 2$ & $k \geq 3$ \\ \hline
      $K_3$    & $\frac{m^{3}}{\log^{4+\delta} m} \ll r_2(K_3;K_m) 
                 \ll \frac{m^3 \log\log m}{\log^2 m}$ 
               & $\frac{m^{k+1}}{\log^{2k+\delta} m} \ll r_k(K_3;K_m)
                 \ll \frac{m^{k+1}\left( \log \log m \right)^{k-1}}{\log^k m}$ 
  \B \T \\ \hline
      $C_4$    & $\frac{m^2}{\log^4 m} \ll r_2(C_4;K_m) \ll \frac{m^2}{\log^2 m}$  
               & $r_k(C_4;K_m) = \Theta\left( \frac{m^{2}}{\log^{2} m} \right)$ 
  \B \T \\ \hline
      $C_6$    & $\frac{m^{3/2}}{\log^3 m} \ll r_2(C_6;K_m) \ll \frac{m^{3/2}}{\log^{3/2} m}$  
               & $r_k(C_6;K_m) = \Theta\left( \frac{m^{3/2}}{\log^{3/2} m} \right)$ 
  \B \T \\ \hline
      $C_{10}$ & $\frac{m^{5/4}}{\log^{5/2} m} \ll r_2(C_{10};K_m) \ll \frac{m^{5/4}}{\log^{5/4} m}$  
               & $r_k(C_{10};K_m) = \Theta\left( \frac{m^{5/4}}{\log^{5/4} m} \right)$
  \B \T \\ \hline
      $K_{s,t}$ & $\frac{m^s}{\log^{2s} m} \ll r_2(K_{s,t};K_m) \ll \frac{m^s}{\log^s m}$  
                & $r_k(K_{s,t};K_m) = \Theta\left( \frac{m^{s}}{\log^{s} m} \right)$ 
  \B \T \\ \hline
    \end{tabular}
    \caption{ Results on $r_k(H;K_m)$ proved by Alon and R\"odl~\cite{mr-alon05}.} \label{alontable}
  \end{center}
\end{table}

One surprising aspect of Alon and R\"odl's~\cite{mr-alon05} techniques is that they prove very good
upper and lower bounds for multicolor Ramsey numbers in cases where the two-color Ramsey number is
not as well understood.  For example, Erd\H{o}s~\cite{ram-erdos83} conjectured that $r(C_4,K_m) =
O(m^{2-\epsilon})$ for some absolute constant $\epsilon > 0$, and this conjecture is still open.
The current best upper bound is an unpublished result of Szemer\'edi which was reproved by Caro,
Rousseau, and Zhang~\cite{ram-caro00} where they showed that $r(C_4,K_m) = O(m^2/\log^2 m)$ and the
current best lower bound is $\Omega(m^{3/2}/\log m)$ by Bohman and Keevash \cite{ram-bohman11}.  In
sharp contrast, for three colors Alon and R\"odl~\cite{mr-alon05} determined $r(C_4,C_4,K_m)$ up to
a poly-log factor and found the order of magnitude of $r_k(C_4;K_m)$ for $k \geq 3$.  A similar
situation occurs for the other graphs in Table~\ref{alontable} besides $K_3$.

\section{Results} 

We focus on the problem of determining $r_k(K_{2,t};K_m)$ when $k$ is fixed and $t$ is no longer a
constant.  Our results can be summarized by the following table; more precise statements are given
later.

\begin{table}[ht]
  \newcommand\T{\rule{0pt}{3ex}}
  \newcommand\B{\rule[-1.8ex]{0pt}{0pt}}
  \begin{center}
    \begin{tabular}{| c | c | c | c |}
      \hline
       & $m \ll \log^2 t$ & $\log^2 t \ll m \ll 2^t$ & $2^t \ll m$
  \T \B \\ \hline
       $k = 1$
               & $mt \ll r \ll \frac{m^2 t}{\log^2 m}$
               & $\frac{m^2 t}{\log^2 (mt)} \ll r \ll \frac{m^2 t}{\log^2 m}$
               & $r \ll \frac{m^2 t}{\log^2 m}$
  \T \B \\ \hline
       $k = 2$ 
               & $mt \ll r \ll \frac{m^2 t}{\log^2 m}$
               & $\frac{m^2 t}{\log^2 (mt)} \ll r \ll \frac{m^2 t}{\log^2 m}$
               & $\frac{m^2 t}{\log^4 (mt)} \ll r \ll \frac{m^2 t}{\log^2 m}$
  \T \B \\ \hline
       $k \geq 3$
               & $mt \ll r \ll \frac{m^2 t}{\log^2 m}$
               & $\frac{m^2 t}{\log^2 (mt)} \ll r \ll \frac{m^2 t}{\log^2 m}$
               & $\frac{m^2 t}{\log^2 (mt)} \ll r \ll \frac{m^2 t}{\log^2 m}$
  \T \B \\ \hline
    \end{tabular}
  \end{center}
  \caption{Results on $r = r_k(K_{2,t};K_m)$ in this paper.} \label{ourtable}
\end{table}

We are able to find the order of magnitude of $r_k(K_{2,t};K_m)$ up to a ploy-log factor for all
ranges of $m$ and $t$ except the upper right table cell where $m$ is much larger than $t$ and $k =
1$.  This is similar to the fact that the order of magnitude of $r(C_4,K_m)$ is unknown but Alon and
R\"odl~\cite{mr-alon05} found the order of magnitude up to a poly-log factor when $k \geq 2$.  So
the only remaining case is $r(K_{2,t},K_m)$ when $m$ is much larger than $t$.  The best known lower
bound is $r(K_{2,t},K_m) \geq c_t (m/\log m)^{\rho(K_{2,t})}$, where $\rho(K_{2,t}) = 2 -
\frac{2}{t}$ (see \cite{prob-alonspencer,ram-krivelevich95}.) Unfortunately, this lower bound has a
constant $c_t$ depending on $t$ when we would like to know the exact order of magnitude.

The upper bound in Table~\ref{ourtable} is a straightforward counting argument using the extremal
number of $K_{2,t}$.   Szemer\'edi (unpublished) and Caro, Rousseau, and Zhang~\cite{ram-caro00}
proved the following proposition for two colors; we extend it for all $k$ using a related but
slightly different technique.

\begin{prop}\label{ramupper}
  For $k \geq 1$, $t \geq 2$, and $m \geq 3$ integers, there exists a constant $c$ depending only on $k$ such that
  \begin{align*}
    r_k(K_{2,t};K_m) \leq c \frac{m^2t}{\log^2 m}.
  \end{align*}
\end{prop}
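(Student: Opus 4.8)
The plan is to prove the equivalent statement that if $K_n$ is edge-colored with colors $1,\dots,k+1$ so that color $i$ contains no $K_{2,t}$ for each $i\le k$ and color $k+1$ contains no $K_m$, then $n\le c(k)\,m^2t/\log^2 m$. Write $G_i$ for the graph of color $i$ and set $G=G_1\cup\dots\cup G_k\subseteq K_n$. Since the complement of $G$ is exactly color $k+1$, which has no $K_m$, the graph $G$ has no independent set of size $m$, so $\alpha(G)\le m-1$. The whole content of the proof is the lower bound $\alpha(G)=\Omega_k\!\big(\sqrt{n/t}\,\log n\big)$ for any graph $G$ that is a union of $k$ graphs each free of $K_{2,t}$; once this is known, combining it with $\alpha(G)\le m-1$ and solving for $n$ gives the stated bound.

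For the lower bound on $\alpha(G)$ I would first collect two counting estimates. By the K\H{o}v\'ari--S\'os--Tur\'an theorem each $G_i$ has at most $\tfrac12(\sqrt{t-1}\,n^{3/2}+n)$ edges, so $e(G)\le k\sqrt{t}\,n^{3/2}$ and the average degree $d$ of $G$ satisfies $d\le 2k\sqrt{tn}$. Next, $G$ has few triangles: classifying each triangle of $G$ by the multiset of colors appearing on its three edges, one bounds (i) monochromatic triangles by $(t-1)e(G_i)$ in each color, using that every edge of a $K_{2,t}$-free graph lies in at most $t-1$ triangles; (ii) triangles with exactly two edges of the same color by the number of monochromatic cherries, which is at most $(t-1)\binom n2$ per color, since any two vertices have at most $t-1$ common neighbors in a $K_{2,t}$-free graph; and (iii) rainbow triangles by $\sum_v d_{G_a}(v)\,d_{G_b}(v)=O(tn^2)$ via Cauchy--Schwarz together with $\sum_v d_{G_i}(v)^2=O(tn^2)$ (again from the cherry bound). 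Altogether $G$ has $O_k(tn^2)$ triangles. Now I would invoke a Shearer-type independence bound for graphs with few triangles, which gives $\alpha(G)=\Omega\!\big(\tfrac{n}{d}\log d\big)$ whenever the number of triangles is at most $d^{3-\epsilon}n$ for a fixed $\epsilon>0$. When $d$ is within a polylogarithmic factor of its maximum $2k\sqrt{tn}$, the triangle count $O_k(tn^2)$ meets this hypothesis and yields $\alpha(G)=\Omega_k\!\big(\sqrt{n/t}\,\log(tn)\big)$; when $d$ is smaller, the greedy bound $\alpha(G)\ge n/(d+1)$ is already at least this large. (The remaining degenerate cases --- $n$ below a constant depending on $k$, or $t\ge n$, where ``$K_{2,t}$-free'' is vacuous --- are absorbed by enlarging $c$.)

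Finally, combining $c_k\sqrt{n/t}\,\log(tn)\le\alpha(G)\le m-1$ gives $n\le tm^2/\big(c_k^2\log^2(tn)\big)$. A first pass using only $\log(tn)\ge1$ gives $n=O_k(tm^2)$; then either $n<m$, in which case $n<m\le tm^2/\log^2 m$ trivially, or $n\ge m$, in which case $\log(tn)\ge\log n\ge\log m$ and therefore $n\le c(k)\,m^2t/\log^2 m$. I expect the main obstacle to be the middle step. For $k=1$ one has the much stronger structural fact that the neighborhood of any vertex induces a graph of maximum degree $t-1$, so $G$ itself is locally sparse; but the union of several $K_{2,t}$-free graphs is neither $K_{2,t}$-free nor locally sparse, since a cherry with edges of two different colors can push a codegree up to $\Omega(n)$. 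What makes the argument go through is that the \emph{triangle} count of the union is nonetheless small by a polynomial factor in the average degree, which is precisely what the sparse-graph independence bound requires; carrying out the triangle classification carefully and checking the hypotheses of that bound across the full range of possible average degrees is where the work lies.
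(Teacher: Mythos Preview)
Your overall strategy is right, but the triangle bound $s=O_k(tn^2)$ is precisely one logarithm too weak, and this costs you the entire $\log^{-2}m$ factor. The quantitative sparse-neighborhood bound reads
\[
\alpha(G)\;\ge\;\frac{cn}{d}\Bigl(\log d-\tfrac12\log\tfrac{s}{n}\Bigr)\;=\;\frac{cn}{2d}\,\log\frac{nd^2}{s},
\]
so to extract $\Omega\bigl(\tfrac{n}{d}\log d\bigr)$ one needs $s\le nd^{2-\epsilon}$ for some fixed $\epsilon>0$; your stated hypothesis ``$s\le nd^{3-\epsilon}$'' is off by one in the exponent. With $d=2k\sqrt{tn}$ one has $nd^2=4k^2tn^2$, so your count $s=O_k(tn^2)$ gives only $\log(nd^2/s)=O_k(1)$ and hence $\alpha(G)=\Omega_k(\sqrt{n/t})$ with no logarithmic gain; the conclusion is then merely $n=O_k(m^2t)$. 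The bottleneck in your classification is the mixed-cherry term: bounding two-colored triangles by the number of monochromatic cherries, $(t-1)\binom{n}{2}$, is exactly of order $nd^2$, so nothing is left over for the logarithm.

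The paper recovers the missing logarithm by applying the $K_{2,t}$-freeness \emph{locally inside neighborhoods} rather than globally. For any vertex $v$ and any $A\subseteq N(v)$ with $|A|=2d$, each color class $C_i[A]$ is still $K_{2,t}$-free, so by K\H{o}v\'ari--S\'os--Tur\'an
\[
|E(G[A])|\;\le\;k\cdot\mathrm{ex}(2d,K_{2,t})\;\le\;4k\sqrt{t}\,d^{3/2}.
\]
This is smaller than $d^2$ by a factor $f=\Theta_k\bigl(\sqrt{d/t}\bigr)=\Theta_k\bigl((n/t)^{1/4}\bigr)$; equivalently, after passing to the bounded-degree subgraph the triangle count is $s\le nd^2/f=O_k(t^{5/4}n^{7/4})$, which now \emph{does} satisfy $s\le nd^{2-\epsilon}$. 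Feeding this into the same independence bound yields $\alpha(G)\ge\Omega_k\bigl(\tfrac{n}{d}\log f\bigr)=\Omega_k\bigl(\sqrt{n/t}\,\log(n/t)\bigr)$, after which the arithmetic in your final paragraph goes through. So the missing idea is not a different lemma but a second application of K\H{o}v\'ari--S\'os--Tur\'an, this time to the $2d$-vertex neighborhoods, which converts the global edge bound into a local one and wins back the factor $(n/t)^{1/4}$ in the triangle count.
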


\newcommand{\reflinearlower}{Proposition~\ref{thmlinearlower}\xspace}
\newcommand{\refrandlower}{Proposition~\ref{thmrandlower}\xspace}
\newcommand{\refktwolower}{Theorem~\ref{alllowerthm}~\textit{(i)}\xspace}
\newcommand{\refkthreelower}{Theorem~\ref{alllowerthm}~\textit{(ii)}\xspace}

The main contribution in this paper is the various lower bounds given in the table.  One simple
lower bound is to take $m-1$ vertex sets $X_1, \ldots, X_{m-1}$, each of size $t+1$.  Color
edges inside each $X_i$ with one color and color all edges between $X_i$s in the other color.
This proves $r(K_{2,t},K_m) > (m-1)(t+1)$.  In fact, this proves the following proposition.

\begin{prop} \label{thmlinearlower}
  Let $k \geq 1$, $t \geq 2$, and $m \geq 3$ be integers. Then $r_k(K_{2,t};K_m) > (m-1)(t+1)$.
\end{prop}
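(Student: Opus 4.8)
The plan is to exhibit an explicit edge-coloring of $K_{(m-1)(t+1)}$ with $k+1$ colors that has no monochromatic $K_{2,t}$ in any of the first $k$ colors and no monochromatic $K_m$ in color $k+1$; the existence of such a coloring immediately gives $r_k(K_{2,t};K_m) > (m-1)(t+1)$. First I would partition the $(m-1)(t+1)$ vertices into $m-1$ blocks $X_1,\dots,X_{m-1}$, each of size exactly $t+1$. Then I color every edge whose endpoints lie in the same block $X_i$ with color $1$, and every edge whose endpoints lie in two different blocks with color $k+1$; colors $2,\dots,k$ receive no edges.

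Next I would check the two required properties. In color $1$ the color class is a vertex-disjoint union of cliques, one copy of $K_{t+1}$ on each $X_i$; since $K_{2,t}$ has $t+2$ vertices while each component has only $t+1$, no copy of $K_{2,t}$ can occur, and colors $2,\dots,k$ are empty so they trivially contain no $K_{2,t}$. In color $k+1$ the color class is the complete $(m-1)$-partite graph with parts $X_1,\dots,X_{m-1}$; any clique in a complete multipartite graph meets each part in at most one vertex, so its clique number is $m-1 < m$, and there is no monochromatic $K_m$.

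There is essentially no obstacle in this argument: the only points to keep straight are the two elementary observations that $K_{2,t}$ cannot embed into $K_{t+1}$ (a vertex count, using $t+2 > t+1$) and that a complete multipartite graph with $m-1$ parts has clique number $m-1$. The hypotheses $k \ge 1$, $t \ge 2$, and $m \ge 3$ are used only to ensure that color $1$ exists, that the blocks are nonempty, and that $m-1 \ge 2$ so the multipartite graph is nontrivial. I would close by noting that this construction is exactly the one indicated in the discussion preceding the statement, now phrased for all $k \ge 1$.
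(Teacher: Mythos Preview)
Your proposal is correct and follows exactly the construction sketched in the paragraph preceding the proposition: partition the vertices into $m-1$ blocks of size $t+1$, put all intra-block edges in color~$1$ and all inter-block edges in color~$k+1$. Your verification that color~$1$ contains no $K_{2,t}$ (by the vertex count $t+2>t+1$) and that color~$k+1$ contains no $K_m$ (since the complete $(m-1)$-partite graph has clique number $m-1$) is just a spelled-out version of what the paper leaves implicit.
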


Note that being slightly more clever for $k \geq 2$ and making each $X_i$ of size $r_k(K_{2,t}) - 1$
does not give a large improvement.  A theorem of Lazebnik and Mubayi~\cite{ffc-lazebnik02} proves
that $r_k(K_{2,t}) > k^2(t-1)$ when $k$ and $t$ are prime powers and $r_k(K_{2,t}) \leq k^2 (t-1)
+ k + 2$ for all $k$ and $t$.  Therefore the size of each $X_i$ could be increased to roughly $k^2
t$ but that implies only a constant improvement in \reflinearlower.

Another lower bound comes from the random graph $G(n,p)$.  Consider a coloring of $E(K_n)$ obtained
by taking $k$ random graphs $G(n,p)$ as the first $k$ colors and letting the last color be the
remaining edges.  Depending on the choice of $n$ and $p$, this construction avoids $K_{2,t}$ in the
first $k$ colors and $K_m$ in the last color. In \refrandlower, we show that when $\log^2 t \ll m
\ll 2^t$ it is possible to choose $p$ so that $G(m^2t/\log^2 (mt),p)$ avoids $K_{2,t}$ and has
independence number at most $m$.  When $m \gg 2^t$, the number of vertices must be reduced to
roughly $m^{2-2/t}$ which does not provide a good lower bound on the Ramsey number.  Most likely,
when $m \ll \log^2 t$ a more detailed analysis shows that one can choose $p$ so that $G(m^2t/\log^2
(mt),p)$ avoids $K_{2,t}$ and has independence number at most $m$.  We skip this analysis and only
investigate Proposition~\ref{thmrandlower} for $m \gg \log^2 t$ because when $m \ll \log^2 t$, the
lower bound of $mt$ from \reflinearlower is better than $m^2t/\log^2(mt)$.  The precise statement of
this lower bound is given in the following proposition.

\begin{prop} \label{thmrandlower}
  Let $k \geq 1$, $t \geq 2$.  For all constants $c_1,c_2 > 0$, there exists a constant $d > 0$
  depending only on $k$ and $c_1,c_2$ such that if $c_1\log^2 t \leq m \leq c_2 2^t$ then
  $r_k(K_{2,t};K_m) \geq d\frac{m^2 t}{\log^2(mt)}$.
\end{prop}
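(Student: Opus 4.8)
The plan is a random construction followed by a small alteration. Fix $n = \lceil d\, m^2 t/\log^2(mt)\rceil$, where $d = d(k,c_1,c_2) > 0$ is a small constant to be chosen at the end, and set $p = 16\log(mt)/(km)$. We may assume $mt \ge T_0$ for a large constant $T_0 = T_0(k,c_1,c_2)$: when $mt < T_0$ the number $m^2t/\log^2(mt)$ ranges over a finite set, so for $m \ge 2$ the bound follows from \reflinearlower once $d$ is taken small enough, and $m = 1$ is degenerate (then $r_k = 1$). From $m \ge c_1 \log^2 t$ and $mt \ge T_0$ one checks $m \ge 16\log(mt)$, hence $kp \le 1$, so we may color each edge of $K_n$ independently, receiving color $i$ with probability $p$ for each $i \in [k]$ and color $k+1$ with probability $1-kp$; then each of the first $k$ color classes is distributed as $G(n,p)$ and color class $k+1$ as $G(n,1-kp)$.

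The first step is to delete few vertices to make colors $1,\dots,k$ free of $K_{2,t}$. A graph contains $K_{2,t}$ iff some pair of vertices has at least $t$ common neighbors; call such a pair \emph{bad}. For a fixed pair and a fixed color $i \le k$ the number of common neighbors is $\mathrm{Bin}(n-2,p^2)$, so a union bound over $t$-subsets gives probability at most $\binom{n-2}{t}p^{2t} \le (enp^2/t)^t$ that the pair is bad in color $i$. Thus if $X$ is the number of pairs bad in some color $i \le k$, then $\mathbb{E}[X] \le k\binom n2 (enp^2/t)^t$. Our parameters give $np^2 = (256d/k^2)\,t$, so $enp^2/t = 256 ed/k^2$, which is strictly below $\tfrac14$ as soon as $d < k^2/(1024 e)$. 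The condition $\mathbb{E}[X] \le n/8$ then becomes $256ed/k^2 \le (4kn)^{-1/t}$, and this is where the hypothesis $m \le c_2 2^t$ enters: it gives $n \le 3 d c_2^2\, 4^t t$, hence $(4kn)^{1/t} \le 4\,(12kdc_2^2 t)^{1/t} \to 4$, so the inequality holds once $T_0$ (hence $t$) is large. Deleting one vertex from each bad pair removes at most $X$ vertices and leaves a subgraph in which every surviving pair has fewer than $t$ common neighbors in each color $i \le k$, so colors $1,\dots,k$ are $K_{2,t}$-free there.

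For the last color, let $Y$ be the indicator that color $k+1$ contains $K_m$. Then $\mathbb{E}[Y] \le \binom nm(1-kp)^{\binom m2} \le \exp\!\bigl(m\log n - kp\binom m2\bigr)$. Since $\log n \le 2\log(mt) + O(1)$ while $kp\binom m2 = 8\log(mt)(m-1) \ge 4m\log(mt)$ (using $m \ge 2$), the exponent is at most $m\bigl(2\log(mt) + O(1) - 4\log(mt)\bigr) < 0$ once $mt$ is large, so $\mathbb{E}[Y] \le \tfrac14$. Now $\mathbb{E}[2X/n + Y] \le \tfrac12 < 1$, so there is an outcome with $2X/n + Y < 1$; as $Y \in \{0,1\}$ this forces $Y = 0$ and $X < n/2$. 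For that outcome, deleting one vertex per bad pair leaves a complete graph on $N > n/2$ vertices, edge-colored in $k+1$ colors, with no monochromatic $K_{2,t}$ in colors $1,\dots,k$ and no $K_m$ in color $k+1$ (the latter preserved under vertex deletion). Hence $r_k(K_{2,t};K_m) > N > n/2 \ge \tfrac d2\, m^2t/\log^2(mt)$, and relabeling $d$ (and taking the minimum with the constant from the $mt < T_0$ case) completes the proof.

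The main obstacle is the bookkeeping of constants in the second step, concentrated in the inequality $256ed/k^2 \le (4kn)^{-1/t}$: one needs $m \le c_2 2^t$ precisely to keep $n$ — and therefore $(4kn)^{1/t}$ — bounded, leaving room between the $K_{2,t}$-avoidance constraint and the independence-number constraint $p \asymp \log(mt)/m$ forced by the last color. If $m$ were superpolynomial in $2^t$, the number of vertices demanded by the last color would be too large for $G(n,p)$ to be made $K_{2,t}$-free by sparse deletion, which is exactly why the $k=1$, large-$m$ entry of Table~\ref{ourtable} remains open.
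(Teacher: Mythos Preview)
Your proof is correct. Both your argument and the paper's use $G(n,p)$ with $n \asymp m^2t/\log^2(mt)$ and $p \asymp \log(mt)/m$, and both invoke the hypothesis $m \le c_2 2^t$ at the same spot (to keep $n$ small enough that $K_{2,t}$'s are rare). The executions differ in two respects. First, the paper colors only \emph{one} of the first $k$ classes randomly (leaving colors $2,\dots,k$ empty) and chooses the constant in $p$ so that the expected number of copies of $K_{2,t}$ is $n^2 e^{-7t} = o(1)$, so no alteration is needed; you randomly color all $k$ classes and clean up by deleting a vertex from each bad pair. Second, for the last color the paper appeals to Frieze's concentration theorem for $\alpha(G(n,p))$, whereas you use a direct first-moment bound on $m$-cliques in color $k{+}1$. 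Your route is more self-contained (no external concentration result) at the price of the deletion step; the paper's is a bit cleaner but imports a nontrivial theorem. Either way, the substance --- random graph at the right density, with $m \le c_2 2^t$ controlling the $K_{2,t}$ count --- is the same.
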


\reflinearlower and \refrandlower take care of the left two columns in Table~\ref{ourtable}.
\reflinearlower works in both columns and most likely \refrandlower also works in both columns,
although we do not prove that since \reflinearlower is better when $m \ll \log^2 t$.  What about the
range $m \gg 2^t$?  As mentioned, an extension of \refrandlower using the random graph $G(n,p)$
gives a lower bound of $c_t m^{2-2/t}$ for some constant $c_t$ depending on $t$. When $t$ is
constant, Alon and R\"odl's~\cite{mr-alon05} result from Table~\ref{alontable} shows lower bounds of
$m^2/\log^4 m$ and $m^2/\log^2 m$ depending on $k$.  If $t$ is not fixed but still much smaller than
$m$, we can prove the following precise lower bounds.  This is our main theorem.

\begin{thm} \label{alllowerthm}
  Let $t \geq 2$ and $k \geq 3$.  There exists a constant $\rho > 0$
  depending only on $k$ such that the following holds.
  \begin{enumerate}
    \item[(i)] If $m \geq 128 \log^2 t$, then $r(K_{2,t},K_{2,t},K_m) \geq \rho \frac{m^2t}{\log^4(mt)}$.
    \item[(ii)] If $m \geq 16 k \log t$, then $r_k(K_{2,t};K_m) \geq \rho \frac{m^2t}{\log^2(mt)}$.
  \end{enumerate}
\end{thm}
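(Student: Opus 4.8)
The plan is to follow the Alon--R\"odl strategy: manufacture the coloring from a single pseudorandom $K_{2,t}$-free graph coming from a finite field, split it into $k$ colors with independent randomness, and control the last color spectrally. Since $m \gg 2^t$ is exactly the range in which the random graph $G(n,p)$ of \refrandlower has too many copies of $K_{2,t}$, I would begin from an explicit $K_{2,t}$-free graph $\Gamma$ on $N$ vertices built from $\mathbb{F}_q$ (a generalized polarity graph, or a norm-graph construction): it is $D$-regular with $D = \Theta(\sqrt{tN})$ --- extremal for K\H{o}v\'ari--S\'os--Tur\'an, every pair of vertices having fewer than $t$ common neighbours --- and has second eigenvalue $\lambda = O(\sqrt D)$. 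Obtaining $\Gamma$ for the desired value $N \asymp m^2t/\log^2(mt)$ requires a prime power $q \approx \sqrt{tN}$ with the appropriate residue modulo $t$, available up to constants by the distribution of primes in arithmetic progressions; this is where a hypothesis like $m \ge 16k\log t$ enters, ensuring that the target $N$ is at least polynomial in $t$ so that a usable $\Gamma$ exists.

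With $\Gamma$ fixed, put $V(K_n) = V(\Gamma)$ (so $n = N$), let $\pi_1 = \mathrm{id}$ and $\pi_2,\dots,\pi_k$ be independent uniformly random permutations of $V(\Gamma)$, and color the edge $\{u,v\}$ with the least $i$ for which $\{\pi_i(u),\pi_i(v)\}\in E(\Gamma)$, using color $k+1$ if no such $i$ exists. Every color $i\le k$ is a subgraph of the copy $\pi_i^{-1}(\Gamma)\cong\Gamma$, hence $K_{2,t}$-free, so nothing needs to be checked there. The entire content is that, with positive probability, color $k+1$ contains no $K_m$; equivalently, the union $\bigcup_{i\le k}\pi_i^{-1}(\Gamma)$ has independence number below $m$. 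An $m$-set $S$ is independent in this union precisely when $\pi_i(S)$ is an independent set of $\Gamma$ for every $i$, so by independence of the $\pi_i$ and a union bound,
\[
  \Pr[\text{color }k{+}1\text{ contains a }K_m]\;\le\;\binom{N}{m}\,\big(q_m(\Gamma)\big)^{k},
\]
where $q_m(\Gamma)$ is the probability that a uniformly random $m$-subset of $V(\Gamma)$ is independent. Since $\binom Nm\le(eN/m)^m$ with $N/m=\Theta(mt/\log^2(mt))$, it suffices that $q_m(\Gamma)<(m/eN)^{m/k}$; balancing this against the available estimate on $q_m(\Gamma)$ pins $Dm/N = m\sqrt{t/N}$ at scale $\log(mt)$, i.e.\ $N\asymp m^2t/\log^2(mt)$, which is the claimed order, with constant depending on $k$.

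The heart of the argument --- and what I expect to be the main obstacle --- is a sufficiently strong upper bound on $q_m(\Gamma)$, i.e.\ on the number of independent $m$-sets of $\Gamma$, for which the spectral gap and the codegree bound $<t$ must be used together. In the relevant range $m<\lambda N/D$ the expander mixing lemma alone cannot even force an $m$-set to span an edge. Instead I would reveal a random $m$-set in $\Theta(\log(mt))$ blocks and bound the chance that no block sends an edge to an earlier one: for a fixed independent set $A$, expander mixing caps the number of vertices with no neighbour in $A$ by $\lambda^2N^2/(D^2|A|)$, and the extremality $D\asymp\sqrt{tN}$ --- equivalently the codegree bound --- is what makes this count genuinely small, so that the telescoping product over the blocks decays fast enough to beat $\binom Nm$ after the $k$-th power. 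The delicate points are that the blocks of a random $m$-set are dependent, so one must check that conditioning on earlier blocks being spread out only shrinks the relevant forbidden sets in the favorable direction, and that the affordable number of blocks is compatible with the $k$-fold product: for $k\ge 3$ there is slack, whereas with only two $K_{2,t}$-colors the budget is tighter and a couple of logarithms are lost, which I expect accounts for the weaker $\log^4$ exponent in part (i) (there one likely also substitutes a cruder estimate or splices in a random-graph color, paralleling the weaker two-colour bounds for $C_4$ in Alon--R\"odl). The numerical hypotheses $m\ge 16k\log t$ and $m\ge 128\log^2 t$ are there to leave enough room to carry all of this out.
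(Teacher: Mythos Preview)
Your plan is correct and is essentially the paper's proof: the paper applies the Alon--R\"odl theorem (their Theorem~2.1/Lemma~3.1, quoted here as Theorem~\ref{alonlower}) as a black box that packages exactly the spectral count of independent $m$-sets you sketch, feeding in the explicit $K_{2,t+1}$-free $(n,d,\lambda)$-graph from $\mathbb{F}_q$ (Theorem~\ref{primeconstr}) with $q$ located via the prime number theorem in arithmetic progressions (Lemma~\ref{primedensity}). One small correction on part~(i): for $k=2$ the paper does not splice in a random-graph color or swap in a cruder estimate --- it runs the identical algebraic construction and the same Alon--R\"odl inequality, merely taking $n \asymp m^2t/\log^4(mt)$ and $m' = (n/d)\log^2 n$ instead of $(n/d)\log n$; the extra $\log^2$ is exactly the slack needed for inequality~\eqref{eq:alonlower} to close when $k=2$.
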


The construction in the above theorem works for $k \geq 2$ and (roughly) the rightmost two columns
in Table~\ref{ourtable}.  When $k = 2$, it is slightly worse than the random graph construction from
\refrandlower and matches it when $k \geq 3$.  But it has the advantage over the random graph of
working in the rightmost column of Table~\ref{ourtable}, where $m$ is much larger than $t$.  Also,
the construction only works for $k \geq 2$, which is the reason for the missing lower bound in the
upper right cell of Table~\ref{ourtable}.

This construction is an algebraic graph construction using finite fields and is similar to a
construction by Lazebnik and Mubayi~\cite{ffc-lazebnik02}, which in turn was based on constructions
of Axenovich, F\"uredi, and Mubayi~\cite{ffc-axenovich00} and F\"uredi~\cite{ffc-furedi96}.  A
theorem of Alon and R\"odl~\cite{mr-alon05} which relates the second largest eigenvalue of a graph
with the number of the independent sets is then used to show the construction is a good choice
for a $K_{2,t}$-free graph with small independence number.  The properties of the construction are
stated in the following theorem.

\begin{thm}\label{primeconstr}
  For any prime power $q$ and any integer $t \geq 2$ such that $q \equiv 0 \pmod t$ or $q \equiv 1 \pmod t$,
  there exists a graph $G$ with the following properties:
  \begin{itemize}
    \setlength{\itemsep}{1pt} \setlength{\parskip}{0pt} \setlength{\parsep}{0pt}
    \item $G$ has $q(q-1)/t$ vertices,
    \item $G$ has no multiple edges but some vertices have loops,
    \item $G$ is regular of degree $q-1$ (loops contribute one to the degree),
    \item $G$ is $K_{2,t+1}$-free,
    \item the second largest eigenvalue of the adjacency matrix of $G$ is $\sqrt{q}$.
  \end{itemize}
\end{thm}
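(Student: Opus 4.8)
The plan is to realize $G$ as a quotient of a norm‑graph–type construction on $\mathbb{F}_q^2$ by a group of order $t$, the point being that folding a $K_{2,2}$‑free graph by $t$ turns isolated common neighbours into sets of size $t$. When $q \equiv 1 \pmod t$, let $K\le \mathbb{F}_q^{*}$ be the subgroup of order $t$ and set $s:=(q-1)/t$; take $V=\mathbb{F}_q\times(\mathbb{F}_q^{*}/K)$ and declare $(X,[x])\sim(Y,[y])$ exactly when $X+Y\in xyK$, which is well defined since replacing $x,y$ by $x\lambda,y\mu$ with $\lambda,\mu\in K$ does not change the coset $xyK$. When $q\equiv 0\pmod t$, so $q=p^{e}$ and $t=p^{j}$, there is no multiplicative subgroup of order $t$, so instead take an additive subgroup $K^{+}\le(\mathbb{F}_q,+)$ of order $t$, put $V=(\mathbb{F}_q/K^{+})\times\mathbb{F}_q^{*}$, and declare $([X],x)\sim([Y],y)$ when $X+Y\in xy+K^{+}$. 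In both cases $|V|=q(q-1)/t$, the relation is a symmetric $0/1$ relation so $G$ has no multiple edges, a direct count over the non‑quotient coordinate shows $G$ is $(q-1)$‑regular with loops counted once, and loops occur precisely for the vertices with $2Y\in y^{2}K$ (resp. $2Y\in y^{2}+K^{+}$), which is a nonempty set, so some vertices carry loops.

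For the $K_{2,t+1}$‑free property I would fix two distinct vertices and count common neighbours directly. In the multiplicative model a common neighbour $(X,[x])$ of $(Y_1,[y_1])$ and $(Y_2,[y_2])$ comes from a solution of $X+Y_1=xy_1\lambda$ and $X+Y_2=xy_2\mu$ with $\lambda,\mu\in K$; subtracting gives $Y_1-Y_2=x(y_1\lambda-y_2\mu)$. If $Y_1\ne Y_2$ then each pair $(\lambda,\mu)$ determines $x$ (hence $[x]$) and then $X$ uniquely, and two pairs produce the same vertex if and only if they lie in the same orbit of the free diagonal action $(\lambda,\mu)\mapsto(\lambda\nu,\mu\nu)$ of $K$ on $K\times K$; since there are at most $t^{2}$ pairs, there are at most $t$ common neighbours. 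If $Y_1=Y_2$ (so the two vertices differ in the quotient coordinate) one checks directly that there are none. Hence no two vertices have more than $t$ common neighbours, so $G$ contains no $K_{2,t+1}$. The additive model is handled symmetrically, with the roles of the two coordinates exchanged.

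For the eigenvalue I would compute $A^{2}$ exactly. Refining the count above shows that two distinct vertices have exactly $t$ common neighbours when both coordinates differ, exactly $t-1$ when only the non‑quotient coordinate differs, and $0$ when the non‑quotient coordinates agree, while $(A^{2})_{vv}=q-1$. Ordering $V$ so that the quotient coordinate is the outer index,
$$A^{2}=(q-1)\,I_{s}\otimes I_{q}+t\,(J_{s}-I_{s})\otimes(J_{q}-I_{q})+(t-1)\,I_{s}\otimes(J_{q}-I_{q}),$$
with $J$ the all‑ones matrices. Diagonalizing with the common eigenvectors $u\otimes v$ of $(J_{s},I_{s})$ and $(J_{q},I_{q})$ and using $ts=q-1$, the eigenvalues of $A^{2}$ are $(q-1)^{2}$ with multiplicity $1$, $q$ with multiplicity $(s-1)(q-1)$, $1$ with multiplicity $q-1$, and $0$ with multiplicity $s-1$. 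Therefore the eigenvalues of $A$ are $q-1$, $\pm\sqrt{q}$, $\pm1$ and $0$; since $(q-1)^{2}$ has multiplicity one and $A$ is nonnegative, $q-1$ is simple, and for $q\ge 3$ one has $1<\sqrt q<q-1$, so the second largest eigenvalue of $A$ is $\sqrt q$.

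The main obstacle is the exactness in the middle step. For $K_{2,t+1}$‑freeness a bound of "at most $t$" suffices, but to pin the spectrum down to the clean value $\sqrt q$ one genuinely needs $(A^{2})_{vw}$ to equal $t$ or $t-1$ in the respective cases, which requires tracking exactly when the parametrizing pair $(\lambda,\mu)$ yields a degenerate solution (such as $x=0$, forced precisely when the quotient coordinates coincide, or the vertex $v$ or $w$ itself via a loop). This is also the step where the hypotheses $q\equiv 0,1\pmod t$ are used, since they are exactly what guarantees an ambient group — additive or multiplicative — with a subgroup of order $t$. The remaining verifications (well‑definedness of the adjacency relation, $(q-1)$‑regularity, the loop count, and the translation of the $A^{2}$ identity into the stated spectrum) are routine.
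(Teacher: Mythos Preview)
Your constructions coincide with the paper's $G^\times$ and $G^+$ (with the coordinate order swapped), and your treatment of regularity and $K_{2,t+1}$-freeness follows the same lines. The genuine divergence is in the spectral computation. The paper argues via characters: for each pair $(\chi,\phi)$ of an additive and a multiplicative character it shows that the vector $\langle\chi,\phi\rangle$ is an eigenvector of $A^2$ with eigenvalue $|\Gamma_{\chi,\phi}|^2$, where $\Gamma_{\chi,\phi}=\sum_{z\in\mathbb{F}_q^*}\chi(\bar z)\phi(z)$ is a Gauss sum, and then invokes the classical evaluation $|\Gamma_{\chi,\phi}|=\sqrt q$ for non-principal $\chi,\phi$. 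Your route is purely combinatorial: you refine the common-neighbour count to $t$, $t-1$, or $0$ according to which coordinates differ, express $A^2$ as a short sum of tensor products of $I$ and $J-I$, and read off the spectrum of $A^2$ from the joint eigenbasis. This is more elementary---no character theory or Gauss sums---at the price of needing the \emph{exact} case-by-case codegree counts, which you correctly flag as the crux and which do hold as you state them. A side benefit is that your case analysis is in fact sharper than the paper's codegree lemma, which asserts the common neighbourhood of any two vertices is always exactly $t$; as your analysis shows, it is $0$ when the non-quotient coordinates agree and $t-1$ when only they differ. This imprecision does not hurt the paper, since its spectral proof is independent of the exact codegree, whereas your approach genuinely needs it.

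Two small remarks. In the additive model the block sizes become $q/t$ and $q-1$, so the tensor identity carries $I_{q-1}$ and $J_{q-1}$ in the inner factor and the relation $ts=q$ replaces $ts=q-1$; the same calculation still yields $A^2$-eigenvalues $(q-1)^2,\,q,\,1,\,0$, now with the multiplicities of $0$ and $1$ interchanged relative to the multiplicative case (matching the paper's two separate lemmas). Second, from the spectrum of $A^2$ you obtain the eigenvalues of $A$ only up to sign; you assert that both $+\sqrt q$ and $-\sqrt q$ occur without argument. For the downstream application only the second-largest eigenvalue in absolute value is needed, so this is harmless, but if you want the literal statement a trace argument (or the paper's pairing of conjugate characters) splits the multiplicity between the two signs.
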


Several open problems remain: in Table~\ref{ourtable} are the upper or lower bounds correct?  The
upper and lower bounds are very close; we are fighting against a poly-log term.  But it would still
be interesting to know which bounds are correct.  One of the differences is a $\log^2 m$ versus a
$\log^2 (mt)$ in the denominator.  If $m$ is much larger than $t$ then $\log^2 m \sim \log^2(mt)$,
but in the left two columns the gap starts to widen.  As $m$ gets smaller relative to $t$, the
$m^2t/\log^2(mt)$ lower bound eventually becomes worse than a really simple $mt$ lower bound.

Other open problems include $r(K_{2,t},K_m)$ when $m$ is much larger than $t$ and 
$r_k(K_{s,t};K_m)$ when $s$ is larger than two.  Using ideas from the projective norm graphs, the
construction in Section~\ref{secconstruction} can be extended to use norms to forbid $K_{s,t}$ for
$s$ fixed, at the expense of more complexity in the proof of the spectrum.  Thus the remaining
problem on $r_k(K_{s,t};K_m)$ is to investigate when $s$, $t$, and $m$ are all going to infinity.  In
other words, how do the constants (implicit) in Table~\ref{ourtable} depend on $s$?  Comments about
these and other open problems are discussed in Section~\ref{secopenprob}.

\section{The Ramsey Numbers $r_k(K_{2,t};K_m)$} \label{secramsey} 
In this section we prove all the upper and lower bounds given in Table~\ref{ourtable}:
Proposition~\ref{ramupper} in Section~\ref{secramupper}, Proposition~\ref{thmrandlower} in
Section~\ref{secrandlower}, and Theorem~\ref{alllowerthm} in Section~\ref{secalglower}.

\subsection{An upper bound} \label{secramupper} 

In this section, we prove Proposition~\ref{ramupper}.  For two colors, the proposition was first
proved in the 1980s by Szemer\'edi but he never published a proof.  Caro, Rousseau, and
Zhang~\cite{ram-caro00} published a proof in 2000 and Jiang and Salerno~\cite{ram-jiang10} gave
another more general proof but still for two colors.  We use a slightly different (but closely
related) proof technique inspired by Alon and R\"odl~\cite{mr-alon05} to extend the upper bound to
three or more colors.  First, we need the following two theorems.
If $F$ is a graph and $n$ is an integer, define $ex(n,F)$ to be the maximum number of edges in an
$n$-vertex graph which does not contain $F$ as a subgraph.

\begin{thm} \label{thm:kovarisosturan}
  (K\"ovari, S\'os, Tur\'{a}n \cite{tbi-kovari54}) For $2 \leq t \leq n$, $ex(n,K_{2,t}) \leq
  \frac{1}{2} \sqrt{t-1} n^{3/2} + \frac{n}{2} \leq \sqrt{t} n^{3/2}$.
\end{thm}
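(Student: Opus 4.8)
The plan is the classical K\H{o}v\'ari--S\'os--Tur\'an double-counting argument specialized to $K_{2,t}$. Let $G$ be an $n$-vertex ($K_{2,t}$-free) graph with $e$ edges, and write $d(v)$ for the degree of a vertex $v$. First I would count \emph{cherries}, i.e.\ unordered pairs of edges sharing a common endpoint. Counting by the center, the number of cherries is exactly $\sum_{v} \binom{d(v)}{2}$. Counting instead by the pair of leaves, a cherry is determined by an unordered pair $\{x,y\}$ of vertices together with a common neighbor of $x$ and $y$; if some pair $\{x,y\}$ had $t$ common neighbors, those $t$ vertices together with $x$ and $y$ would form a $K_{2,t}$ in $G$, which is forbidden. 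Hence every pair contributes at most $t-1$ cherries, and
\[
  \sum_{v} \binom{d(v)}{2} \;\le\; (t-1)\binom{n}{2}.
\]

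Next I would bound the left-hand side from below by convexity. The function $x \mapsto \binom{x}{2} = \tfrac12 x(x-1)$ is convex on $[0,\infty)$, so Jensen's inequality applied to the degree sequence gives, with $\bar d := \tfrac1n\sum_v d(v) = \tfrac{2e}{n}$,
\[
  \sum_{v} \binom{d(v)}{2} \;\ge\; n\binom{\bar d}{2} \;=\; \frac{n}{2}\cdot\frac{2e}{n}\left(\frac{2e}{n}-1\right).
\]
Combining the two displays and multiplying through by $2/n$, and writing $x := 2e/n$, yields the quadratic inequality $x^2 - x \le (t-1)(n-1)$, so $x \le \tfrac12\bigl(1+\sqrt{1+4(t-1)(n-1)}\,\bigr)$. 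Using $\sqrt{a+b}\le\sqrt a+\sqrt b$ and $n-1\le n$, the right side is at most $\tfrac12\bigl(1+1+2\sqrt{t-1}\,\sqrt n\,\bigr) = 1+\sqrt{t-1}\,n^{1/2}$. Therefore $2e/n \le 1+\sqrt{t-1}\,n^{1/2}$, i.e.\ $e \le \tfrac12\sqrt{t-1}\,n^{3/2} + \tfrac n2$, which is the first claimed bound on $ex(n,K_{2,t})$.

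For the second inequality it suffices to check $\tfrac n2 \le \bigl(\sqrt t - \tfrac12\sqrt{t-1}\,\bigr)n^{3/2}$. Since $\sqrt{t-1}\le\sqrt t$ we have $\sqrt t - \tfrac12\sqrt{t-1} \ge \tfrac12\sqrt t \ge \tfrac12$, and $n^{3/2}\ge n$ for $n\ge 1$, so $\tfrac12\sqrt{t-1}\,n^{3/2}+\tfrac n2 \le \tfrac12\sqrt{t-1}\,n^{3/2} + \bigl(\sqrt t - \tfrac12\sqrt{t-1}\,\bigr)n^{3/2} = \sqrt t\, n^{3/2}$, as required.

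I do not expect a genuine obstacle: the whole argument is elementary. The only delicate point is the passage from the quadratic bound $x^2-x\le(t-1)(n-1)$ to the clean closed form; one must relax $\sqrt{1+4(t-1)(n-1)}$ using subadditivity of the square root (rather than completing the square exactly) in order to land on precisely the constant $\tfrac12\sqrt{t-1}$ together with the lower-order term $\tfrac n2$, and one should keep the $n-1$ versus $n$ bookkeeping straight so the additive $\tfrac n2$ is not accidentally absorbed.
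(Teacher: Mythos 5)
Your proof is correct and is precisely the classical K\H{o}v\'ari--S\'os--Tur\'an double-counting argument; the paper itself does not prove this theorem but simply cites \cite{tbi-kovari54}, where this is exactly the argument used. The cherry count, the $K_{2,t}$-freeness bound of $t-1$ common neighbours per pair, the Jensen step, and the relaxation of the discriminant via $\sqrt{a+b}\le\sqrt a+\sqrt b$ are all carried out soundly, and the final comparison $\tfrac12\sqrt{t-1}\,n^{3/2}+\tfrac n2\le\sqrt t\,n^{3/2}$ is verified correctly for $t\ge 2$, $n\ge 1$.
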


The following theorem is a corollary of the famous result of Ajtai, Koml\'os, and
Szemer\'edi~\cite{ram-ajtai80} on $r(K_3,K_m)$ (see also \cite[Lemma 12.16]{randg-bollobasbook}.)

\begin{thm} \label{thm:sparsenbrhood}
  There exists an absolute constant $c$ such that the following holds.  Let $G$ be an $n$-vertex
  graph with average degree $d$ and let $s$ be the number of triangles in $G$.  Then
  \begin{align*}
    \alpha(G) \geq \frac{cn}{d} \left( \log d - \frac{1}{2} \log \left( \frac{s}{n} \right) \right).
  \end{align*}
\end{thm}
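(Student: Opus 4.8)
The plan is to reduce to the triangle--free case by a random vertex sampling argument, and then invoke the classical bound of Ajtai--Koml\'os--Szemer\'edi (equivalently, Shearer): every triangle--free graph on $N$ vertices of average degree $D$ has independence number at least $c_0 N\frac{\log D}{D}$ for $D\ge 2$ (and at least $c_0 N$ for $D\le 2$), where $c_0>0$ is absolute, and this lower bound is a decreasing function of $D$. The mechanism that makes the reduction work is a scaling mismatch: if each vertex is kept independently with probability $p$, then the expected number of vertices scales like $p$, the expected number of edges like $p^2$, and the expected number of triangles like $p^3$. Hence a suitable choice of $p$ makes the triangles negligible while the average degree drops only by a factor $p$; deleting one vertex per surviving triangle then leaves a large triangle--free graph of average degree $O(pd)$. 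Before this, one disposes of degenerate regimes: we may assume $d\ge 2$, and we may assume the quantity $\log d-\tfrac12\log(s/n)$ is at least, say, $2$ and that $s\ge n$ — when $\log d-\tfrac12\log(s/n)$ is small or negative the claim follows from the Caro--Wei/Tur\'an bound $\alpha(G)\ge n/(d+1)\ge n/(2d)$, and when $s<n$ the graph $G$ is already almost triangle--free and one applies the triangle--free bound directly to the triangle--free graph obtained by deleting one vertex of each triangle of $G$.

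Set $p=\min\{1,\sqrt{n/(2s)}\}$ and let $V_p$ be a random subset of $V(G)$ obtained by keeping each vertex independently with probability $p$; write $W=e(G[V_p])$ and let $T$ be the number of triangles of $G[V_p]$. By linearity of expectation, $\mathbb{E}|V_p|=pn$, $\mathbb{E}[T]=p^3 s\le pn/2$, and $\mathbb{E}[W]=p^2\cdot\tfrac{nd}{2}$. Since we cannot take the expectation of the final (nonlinear) bound directly, consider instead the linear combination
\[
  \Phi \;=\; |V_p| \;-\; T \;-\; \frac{1}{2pd}\,W ,
\]
for which $\mathbb{E}[\Phi]\ge pn-\tfrac{pn}{2}-\tfrac{pn}{4}=\tfrac{pn}{4}>0$. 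Fix a realization $U$ of $V_p$ with $\Phi\ge pn/4$. Delete one vertex from each triangle of $G[U]$ to obtain a triangle--free graph $G'$. Then $G'$ has at least $|U|-T\ge pn/4$ vertices, and at most $e(G[U])=W$ edges; since $\tfrac{1}{2pd}W\le |U|-T$ we get $W\le 2pd\,(|U|-T)$, so the average degree of $G'$ is at most $4pd$.

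Now apply the triangle--free bound to $G'$. Because that lower bound is decreasing in the average degree, $G'$ having at least $pn/4$ vertices and average degree at most $4pd$ gives
\[
  \alpha(G)\;\ge\;\alpha(G')\;\ge\;\frac{pn}{4}\cdot\frac{c_0\log(4pd)}{4pd}\;=\;\frac{c_0}{16}\cdot\frac{n}{d}\,\log(4pd),
\]
where one checks $4pd\ge 2$ in the only relevant range (when $\log d-\tfrac12\log(s/n)>0$, i.e. $s<nd^2$, we have $4pd=4d\sqrt{n/(2s)}>2\sqrt2$; otherwise the claimed bound is nonpositive and there is nothing to prove). Finally, in the generic branch $p=\sqrt{n/(2s)}$ we have
\[
  \log(4pd)\;=\;\log d+\tfrac12\log(n/s)+\log(2\sqrt2)\;\ge\;\log d-\tfrac12\log(s/n),
\]
so the theorem follows with $c=c_0/16$; the branch $p=1$ (which occurs only when $s<n$) is subsumed in the degenerate cases handled above.

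\textbf{Expected main obstacle.} The delicate point is the first--moment step: one needs a \emph{single} induced subgraph that is simultaneously large after triangle--deletion and of small average degree, and since $\alpha$ of the resulting triangle--free graph depends nonlinearly on these two quantities, one cannot average the final inequality. The weighted combination $\Phi$ is the device that resolves this. A secondary nuisance is the careful bookkeeping for the boundary regimes ($d$ small, $s$ small, or the right--hand side nonpositive), where the trivial bound $\alpha(G)\ge n/(d+1)$ and a direct application of the triangle--free bound to $G$ itself take over; these are routine but must be stated to make the inequality hold as written.
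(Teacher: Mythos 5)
The paper does not actually prove Theorem~\ref{thm:sparsenbrhood}: it is cited as a known consequence of the Ajtai--Koml\'os--Szemer\'edi method, with a pointer to Lemma~12.16 of Bollob\'as's \emph{Random Graphs}. Your proof, via random vertex sampling (vertices kept with probability $p$), deletion of one vertex per surviving triangle, and an application of the triangle-free Shearer/AKS bound, with the weighted quantity $\Phi=|V_p|-T-\tfrac{1}{2pd}W$ to extract a single good realization, is precisely the standard way this lemma is established. The first-moment scaling ($p$, $p^2$, $p^3$ for vertices, edges, triangles) and the choice $p=\min\{1,\sqrt{n/(2s)}\}$ are correct, as is the monotonicity argument used to pass from an upper bound on the average degree of $G'$ to a lower bound on $\alpha(G')$ (provided one uses the version of the triangle-free bound in which $f(D)=\max\{\log D,1\}/D$ is decreasing on all of $(0,\infty)$).

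There is, however, a genuine gap in your handling of the regime where $s$ is small relative to $n$. When $p=1$ (equivalently $s\le n/2$), your final inequality $\log(4pd)\ge\log d-\tfrac12\log(s/n)$ becomes $\log 4\ge\tfrac12\log(n/s)$, i.e.\ $s\ge n/16$, so for $s<n/16$ the generic branch does not deliver the claimed bound. Your fallback of ``deleting one vertex per triangle and applying the triangle-free bound'' also does not suffice there: the triangle-free bound yields only $\alpha\gtrsim n\log d/d$, while for $s$ much smaller than $n$ the theorem demands $\alpha\gtrsim \tfrac{n}{d}\bigl(\log d+\tfrac12\log(n/s)\bigr)$, which has an extra $\log(n/s)$ term that $\log d$ cannot absorb. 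The underlying reason is that the statement as printed is literally false for very small $s$: for a $3$-regular graph on $n$ vertices with exactly one triangle (such graphs exist for all large $n$), the right-hand side is $\Theta\!\bigl(n\log n\bigr)$, exceeding $n$, while $\alpha(G)\le n$. So the theorem carries an implicit assumption of the form $s\ge c'n$ (or equivalently that the parenthesized quantity does not exceed $\log d$ by more than a constant). Since the paper only invokes the theorem through Corollary~\ref{cor:everynbr}, where $s=nd^2/f$ with $f\ll d^2$, this constraint is always met in its actual use. Your proof is correct in that regime; you should simply state the hypothesis $s\ge c'n$ (and $s\ge 1$) explicitly rather than claiming to cover $s<n$ via the degenerate cases, where the claim as written cannot hold.
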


We will apply this theorem in a graph where we can bound the average degree and know a bound on the
number of edges in any neighborhood; using standard tricks the theorem can be changed to use average
degree.

\begin{cor} \label{cor:everynbr}
  There exists an absolute constant $c$ such that the following holds.  Let $G$ be an $n$-vertex
  graph with average degree at most $d$, where for every vertex $v \in V(G)$, every $2d$-subset of
  $N(v)$ spans at most $d^2/f$ edges.  Then the independence number of $G$ is at least $\frac{cn
  \log f}{d}$.
\end{cor}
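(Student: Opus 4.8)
The plan is to reduce everything to a count of the number of triangles in $G$ and then invoke Theorem~\ref{thm:sparsenbrhood}. Indeed, if $s$ denotes the number of triangles of $G$, then Theorem~\ref{thm:sparsenbrhood} gives
\[
  \alpha(G) \;\geq\; \frac{c n}{d}\left( \log d - \tfrac12 \log \tfrac{s}{n} \right),
\]
so once we know $s \leq n d^2 / f$ the bracketed quantity is at least $\log d - \tfrac12\log(d^2/f) = \tfrac12 \log f$, and we are done (with $c/2$ in place of $c$). Hence the whole task is to prove that $G$ has at most about $n d^2 / f$ triangles.

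To bound the triangle count I would use the identity $3s = \sum_{v \in V(G)} e(G[N(v)])$, where $e(G[N(v)])$ counts the edges of $G$ with both endpoints in $N(v)$; every triangle is counted once at each of its three vertices. It thus suffices to show $e(G[N(v)]) \leq d^2/f$ for every $v$, since this already yields $3s \leq n d^2 / f$. This is exactly what the neighborhood hypothesis gives when $|N(v)| \leq 2d$: then $N(v)$ has at most $2d$ vertices, so it spans at most $d^2/f$ edges. For a vertex $v$ with $|N(v)| > 2d$ the hypothesis is weaker — averaging over a uniformly random $2d$-subset $S \subseteq N(v)$ and using that each such $S$ spans at most $d^2/f$ edges gives only $e(G[N(v)]) \lesssim |N(v)|^2/f$, which is too large. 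I would therefore first delete the vertices of degree larger than $2d$; since $\sum_v \deg(v) \leq d n$ there are fewer than $n/2$ such vertices, so the induced subgraph $G'$ on the remaining vertices has at least $n/2$ vertices, average degree at most $2d$, and satisfies $\alpha(G) \geq \alpha(G')$. In $G'$ every neighborhood has size at most $2d$, so the argument above applies cleanly and $G'$ has at most $\tfrac13 n' d^2 / f \leq n d^2/f$ triangles, where $n' = |V(G')|$.

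Finally I would apply Theorem~\ref{thm:sparsenbrhood} to $G'$ with its actual average degree $\bar d \leq 2d$ and vertex count $n' \geq n/2$, substitute the triangle bound $s'/n' \leq d^2/f$, and simplify; the logarithmic terms then combine to give $\Omega(\log f)$, and after absorbing constants one obtains $\alpha(G) \geq \alpha(G') \geq c' n \log f / d$. I expect the only genuinely fussy point to be this last bookkeeping step: one has to track how the average degree and the vertex count move under the deletion of high-degree vertices, splitting into the case where $\bar d$ is within a bounded factor of $d$ (where Theorem~\ref{thm:sparsenbrhood} is applied directly) and the case where $\bar d$ is much smaller than $d$ (where the elementary bound $\alpha(G') \geq n'/(\bar d+1)$ is already more than enough), and one must separately dispose of the degenerate regimes — $f$ bounded, or $n$ comparable to $d$ — in which the asserted estimate is weaker than the automatic bound $\alpha(G) \geq n/(d+1)$.
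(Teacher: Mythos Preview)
Your argument is correct and is essentially the same as the paper's proof: delete the vertices of degree larger than $2d$, pass to the remaining induced subgraph on at least $n/2$ vertices with maximum degree at most $2d$, bound the triangle count there via the neighborhood hypothesis, and invoke Theorem~\ref{thm:sparsenbrhood}. If anything, you are more careful than the paper: the paper silently absorbs the constant-factor losses in $n$ and $d$ and does not discuss the regime where the average degree of the trimmed graph drops far below $d$, whereas you explicitly flag that case and note that the trivial bound $\alpha\geq n'/(\bar d+1)$ handles it.
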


\begin{proof} 
Let $H$ be the subgraph of $G$ formed by deleting all vertices with degree bigger than $2d$.  $H$
has at least half the vertices of $G$ since $G$ has average degree at most $d$; in addition $H$ has
maximum degree $2d$.  Also, $H$ has at most $s = nd^2/f$ triangles since each neighborhood of a
vertex in $H$ spans at most $d^2/f$ edges.  Thus Theorem~\ref{thm:sparsenbrhood} implies there
exists a constant $c$ so that
\begin{align*}
  \alpha(G) \geq \frac{cn}{d} \left( \log d - \frac{1}{2} \log \left( \frac{d^2}{f} \right) \right)
  = \frac{cn}{d} \left( \log d - \log \left( \frac{d}{\sqrt{f}} \right) \right)
  = \frac{cn}{2d} \log f.
\end{align*}
\end{proof} 

\begin{proof}[Proof of Proposition~\ref{ramupper}] 
Let $c_1$ be the constant from Corollary~\ref{cor:everynbr}; note that we can assume $c_1 \leq 1$.
Define $c_2 = \frac{256k^2}{c_1^2}$ and assume $n > \frac{c_2 m^2 t}{\log^2 m}$. Consider a
$(k+1)$-coloring of $E(K_n)$ and let $C_i$ be the graph whose edges are the $i$th color class for $i
= 1,\dots,k$.  Assume $C_i$ is $K_{2,t}$-free for all $1 \leq i \leq k$.  We will show that the
independence number of $C_1 \cup \dots \cup C_k$ is at least $m$, which will imply the
$(k+1)$-st color class contains a copy of $K_m$; i.e.  $r_k(K_{2,t};K_m) \leq \frac{c_2 m^2t}{\log^2
m}$.

Since $C_1,\dots,C_k$ are $K_{2,t}$-free, they each have at most $\sqrt{t} n^{3/2}$ edges by
Theorem~\ref{thm:kovarisosturan}. Let $G = C_1 \cup \dots \cup C_k$ so $\left| E(G) \right| \leq
k\sqrt{t} n^{3/2}$.  Let $d = 2k\sqrt{tn}$, so that $G$ has average degree at most $d$.  Consider
some vertex $v \in V(G)$ and let $A \subseteq N(v)$ with $|A| = 2d$.  Then $C_i[A]$ is
$K_{2,t}$-free for $1 \leq i \leq k$ so $\left|E(G[A])\right| \leq k \cdot ex(2d,K_{2,t}) \leq
4k\sqrt{t}d^{3/2}$.  To apply Corollary~\ref{cor:everynbr}, we need to solve the following for $f$:
\begin{align*}
  4k\sqrt{t}d^{3/2} = \frac{d^2}{f}.
\end{align*}
The solution is $f = \frac{1}{4k} \sqrt{d/t}$ so Corollary~\ref{cor:everynbr} implies $G$ contains
an independent set of size $\frac{c_1 n \log f}{d}$.  To complete the proof, we just need to show
this is at least $m$.  Use the definitions of $d = 2k\sqrt{tn}$ and $f = \frac{1}{4k} \sqrt{d/t}$ to
obtain
\begin{align*}
  \alpha(G) \geq \frac{c_1 n}{d} \log f 
  = \frac{c_1 n}{2k \sqrt{t n}} \log \left( \frac{1}{4k} \frac{\sqrt{2k} \sqrt[4]{tn}}{\sqrt{t}} \right) 
  = \frac{c_1}{2k} \sqrt{\frac{n}{t}} \log \left( \frac{1}{2\sqrt{2k}} \sqrt[4]{\frac{n}{t}} \right).
\end{align*}
Recall that we assumed $n > \frac{c_2 m^2 t}{\log^2 m}$, so
\begin{align*}
\alpha(G) \geq 
  \frac{c_1}{2k} \sqrt{\frac{c_2 m^2}{\log^2 m} } \log \left( \frac{1}{2\sqrt{2k}} \sqrt[4]{\frac{c_2
  m^2}{\log^2 m}} \right).
\end{align*}
Use that $c_2 = \frac{256k^2}{c_1^2}$ and simplify to obtain
\begin{align*}
\alpha(G) \geq 
  \frac{8m}{\log m} \log \left( \sqrt{\frac{\sqrt{c_2}}{8k} \cdot \frac{m}{\log m}} \right) = 
  \frac{4m}{\log m} \log \left( \frac{2}{c_1} \frac{m}{\log m} \right).
\end{align*}
Since $c_1 \leq 1$,
\begin{align*}
\alpha(G) \geq \frac{4m}{\log m} \log\left( \frac{m}{\log m} \right) 
        = \frac{4m}{\log m} \left( \log m - \log \log m \right) \geq m.
\end{align*}
The last inequality uses $\log m \geq \frac{4}{3} \log \log m$ which is true for $m \geq 3$.
\end{proof} 

\subsection{The Random Graph} \label{secrandlower} 

In this section, we prove \refrandlower by using the random graph $G(n,p)$.

\begin{lemma}\label{singleGnp}
  For all constants $c_1,c_2$, there exists a constant $c_3$ such that the following holds.  Given
  two integers $t$ and $m$ with $c_1\log^2 t \leq m \leq c_2 2^t$, let $n = c_3
  \frac{m^2t}{\log^2\left( mt \right)}$ and $p = \sqrt{\frac{t}{e^{8}n}}$.  Then with probability
  tending to $1$ as $m$ tends to infinity ($m \rightarrow \infty$ implies $t,n\rightarrow \infty$ as
  well), $G(n,p)$ is $K_{2,t}$-free and has independence number at most $m$.
\end{lemma}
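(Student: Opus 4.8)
The plan is to establish each of the two properties by a first–moment (union–bound) argument and then combine them with a final union bound over the two failure events. The first step is to rewrite $p$ transparently: substituting $n = c_3 m^2 t/\log^2(mt)$ into $p = \sqrt{t/(e^8 n)}$ gives $p = \frac{\log(mt)}{e^4\sqrt{c_3}\,m}$, so that $np^2 = t/e^8$ and $pm = \frac{\log(mt)}{e^4\sqrt{c_3}}$. Since $m \le c_2 2^t$ forces $t \to \infty$ (and $n\to\infty$ is immediate), the asymptotic statement is meaningful; moreover $m \ge c_1\log^2 t$ gives $\log(mt) = O(\sqrt m)$ and hence $n \ge t/e^8$ for all large $m$, so $p \in [0,1]$ as required.

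For $K_{2,t}$-freeness, I would use that $G(n,p)$ contains a copy of $K_{2,t}$ exactly when some pair of vertices has at least $t$ common neighbours. For a fixed pair $\{u,v\}$ the codegree is distributed as $\mathrm{Bin}(n-2, p^2)$, with mean at most $np^2 = t/e^8$, so the usual binomial tail estimate gives $\Pr[\operatorname{codeg}(u,v) \ge t] \le \binom{n-2}{t}p^{2t} \le (enp^2/t)^t = e^{-7t}$. Summing over the $\binom n2$ pairs, the expected number of pairs with codegree at least $t$ is at most $n^2 e^{-7t}$; bounding $n \le c_3 c_2^2 4^t t$ via $m \le c_2 2^t$ yields $n^2 e^{-7t} \le c_3^2 c_2^4\, t^2 e^{(4\log 2 - 7)t} \to 0$, since $4\log 2 < 7$. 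Hence with probability tending to $1$ there is no such pair, i.e.\ $G(n,p)$ is $K_{2,t}$-free.

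For the independence number I would union–bound over $m$-subsets: $\Pr[\alpha(G(n,p)) \ge m] \le \binom n m (1-p)^{\binom m2} \le \exp\!\big(m\log n - \tfrac12 p m(m-1)\big)$. Inserting the formula for $p$, using $\log n \le 2\log(mt)$ (valid once $c_3 \le 1$ and $mt$ is large) and $\tfrac12 p(m-1) \ge \frac{\log(mt)}{4e^4\sqrt{c_3}}$ for $m \ge 2$, the exponent is at most $m\log(mt)\big(2 - \tfrac{1}{4e^4\sqrt{c_3}}\big)$. Choosing $c_3$ small enough, e.g.\ $c_3 = 1/(128 e^8)$ (which is $\le 1$ and is also compatible with the earlier requirement $n \ge t/e^8$), makes the parenthesised factor negative, so the exponent tends to $-\infty$ and $\Pr[\alpha(G(n,p)) \ge m] \to 0$. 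A final union bound over the two vanishing-probability events finishes the proof.

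The argument is essentially routine once the right form of $p$ is in hand; the only point that needs care is the bookkeeping of constants, and it is worth isolating where each hypothesis is used. The bound $m \le c_2 2^t$ is \emph{essential} in the $K_{2,t}$-free estimate — it is precisely the condition that keeps $n^2 e^{-7t}$ small, and it is exactly why this construction collapses once $m \gg 2^t$ and one is forced down to $n \approx m^{2-2/t}$. The bound $m \ge c_1\log^2 t$, by contrast, is only needed to guarantee $p \le 1$ (a lower bound $m \gtrsim \log t$ would already suffice) and is comfortably enough for the independence-number step. One must then check that a single choice of $c_3$ makes every inequality valid for all sufficiently large $m$, which the computation above does.
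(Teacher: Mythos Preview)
Your argument is correct. The $K_{2,t}$-freeness step is essentially the same first-moment computation as in the paper (you phrase it via codegrees, the paper via the raw count of copies, but the arithmetic is identical and both reduce to showing $n^2e^{-7t}\to 0$ using $m\le c_2 2^t$).

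The independence-number step, however, is genuinely different. The paper invokes Frieze's concentration theorem for $\alpha(G(n,p))$, checks that $d=pn=o(n)$ (this is where $m\ge c_1\log^2 t$ enters for them), and then verifies that $\tfrac{2n}{d}\log d$ lands below $m$ once $c_3$ is small enough. Your approach is more elementary: a direct union bound over $m$-subsets, giving $\Pr[\alpha\ge m]\le\exp\bigl(m\log n-\tfrac12 pm(m-1)\bigr)$, and then balancing the exponent by taking $c_3$ small. This avoids citing a nontrivial result and makes the proof self-contained; the price is that your constant $c_3$ is somewhat smaller (roughly $1/(128e^8)$ versus the paper's $1/(400e^8)$---comparable, in fact), and you do not get the sharper information that $\alpha$ is actually concentrated. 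For the purposes of this lemma the elementary route is entirely adequate and arguably cleaner. Your observation that only $m\gtrsim\log t$ (rather than $\log^2 t$) is needed to keep $p\le 1$ is also correct; the paper does not comment on this.
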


\begin{proof} 
Let $c_3 = \min\{\frac{1}{c_2^2}, \frac{1}{400e^8}\}$.  The expected number of $K_{2,t}$s is upper bounded by
\begin{align} \label{eq:randnumK2t}
  n^2 \binom{n}{t} p^{2t} \leq n^2 \left( \frac{en}{t} \right)^t \left( \frac{t}{e^8n} \right)^t 
  = n^2 e^{-7t}.
\end{align}
We want this to go to zero as $m \rightarrow \infty$, so it suffices to show that $t$ is bigger than
roughly $\log n$.  Using the definition of $n$, upper bound $\log n$ by
\begin{align*}
  \log n = \log\left( c_3 \frac{m^2 t}{\log^2\left( mt \right)} \right) 
     \leq 2 \log m + \log t + \log c_3
\end{align*}
But since $m \leq c_2 2^t \leq c_2 e^t$,
\begin{align*}
\log n \leq 2 (\log c_2 + t) + \log t + \log c_3 
  \leq 2 t + \log t + 2 \log c_2 + \log c_3.
\end{align*}
Since $c_3 \leq \frac{1}{c_2^2}$, $2\log c_2 + \log c_3 \leq 0$.  Using that $\log t \leq t$, we obtain
$\log n \leq 3t$, which when combined with \eqref{eq:randnumK2t} shows the expected number of
$K_{2,t}s$ is upper bounded by
\begin{align*}
  n^2 e^{-7t} = e^{2\log n - 7t} \leq e^{-t}.
\end{align*}
Since $m \rightarrow \infty$ implies $t \rightarrow \infty$, the expected number of $K_{2,t}$s goes
to zero as $m \rightarrow \infty$.

Let $d = pn$.  When $d = o(n)$, the independence number of $G(n,p)$ is concentrated around
$\frac{2n}{d} \log d$.  More precisely, Frieze~\cite{randg-frieze90} (see
also~\cite{prob-alonspencer,randg-bollobasbook}) proved that for fixed $\epsilon > 0$ and $d =
o(n)$, with probability going to one as $n \rightarrow \infty$, the independence number of $G(n,p)$
is within $ \frac{\epsilon n}{d}$ of $\frac{2n}{d} (\log d - \log \log d - \log 2 + 1)$.  First,
note that since $c_1 \log^2 t \leq m$, $m^2t/\log^2(mt) \rightarrow \infty$ as $m \rightarrow
\infty$.  This implies $n/t \rightarrow \infty$ which implies $d = pn = o(n)$, so the result of
Frieze~\cite{randg-frieze90} can be applied.  Therefore, w.h.p.
\begin{align*}
  \alpha(G(n,p)) &< 10 \frac{2n}{pn} \log(pn) 
      = 20e^{4} \sqrt{\frac{n}{t}} \log\left(
  \sqrt{ \frac{nt}{e^8} } \right) \leq 10e^4 \sqrt{\frac{n}{t}} \log(nt).
\end{align*}
The next step is to show that when the definition of $n$ is inserted, the expression is at most $m$
showing w.h.p.\ the independence number of $G(n,p)$ is at most $m$.  The computations are very
similar to the end of the proof of Proposition~\ref{ramupper} in Section~\ref{secramupper}.
\begin{align*}
  \alpha(G(n,p)) 
  &< 10e^4 \sqrt{c_3} \frac{m}{\log(mt)} \log\left( \frac{c_3m^2 t^2}{\log^2(mt)} \right)
  \leq 20e^4 \sqrt{c_3} m \leq m.
\end{align*}

Therefore, as $m$ tends to infinity, the probability that $G(n,p)$ contains a copy of $K_{2,t}$ or has
independence number at least $m$ tends to zero, completing the proof.
\end{proof} 

\begin{proof}[Proof of \refrandlower] 
Color $E(K_n)$ by $k+1$ colors as follows: let the first color correspond to $G(n,p)$ with $p =
\sqrt{t/(e^8n)}$, do not assign any edges to colors $2, \dots, k$, and let the $(k+1)$st color be
the remaining edges (complement of the first color).  Lemma~\ref{singleGnp} shows w.h.p.\ the first
color is $K_{2,t}$-free (since $k$ is fixed) and the $(k+1)$st color has clique number at most $m$.
\end{proof} 

\subsection{An algebraic lower bound} \label{secalglower} 

In this subsection, we prove Theorem~\ref{alllowerthm}.
Our main tool is the following very general theorem from Alon and R\"odl~\cite{mr-alon05}.
Their idea is to take an $H$-free graph $G$ and construct $k$ graphs
$G_1,\ldots,G_k$ by taking $k$ random copies of $G$.  In other words,
fix some set $W$ of size $\left|V(G)\right|$ and let
$G_i$ be the graph obtained by a random bijection between $V(G)$ and $W$.
We now have a $k+1$ coloring of the edges of the complete graph on vertex set $W$: let the first $k$ colors be $G_1, \dots, G_k$
and let the $k+1$st color be the edges outside any $G_i$.
Alon and R\"odl's key insight is that if we know the second largest eigenvalue of $G$,
then $G$ is an expander graph which implies some knowledge about the independent
sets in $G$.  This is then used to bound the independence number of $G_1 \cup \dots \cup G_k$,
in other words obtain an estimate of $m$.

\begin{thm}\label{alonlower}
  (Alon and R\"odl, Theorem 2.1 and Lemma 3.1 from \cite{mr-alon05}) Let $G$ be an $n$-vertex,
  $H$-free, $d$-regular graph where $G$ has no multiple edges but some vertices have loops and let $k
  \geq 2$ be any integer.  Let $\lambda$ be the second largest eigenvalue in absolute value of the
  adjacency matrix of $G$.  If $m \geq \frac{2n}{d} \log n$ and 
  \begin{align} \label{eq:alonlower}
    \left( \frac{emd^2}{4\lambda n \log n}\right)^{\frac{2kn\log n}{d}} \left( \frac{2e\lambda n}{md} \right)^{km} \left(\frac{m}{n} \right)^{m(k-1)} < 1
  \end{align}
  then $r_k(H;K_m) > n$.
\end{thm}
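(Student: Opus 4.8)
The plan is to follow the two‑step strategy behind the quoted result: a probabilistic reduction that converts the Ramsey statement into a counting statement about large independent sets of $G$, followed by a spectral bound on that count.

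\emph{Step 1: the random coloring.} Fix a vertex set $W$ with $|W|=n$ and, independently for $i=1,\dots,k$, let $G_i$ be the image of $G$ under a uniformly random bijection $V(G)\to W$. Color $E(K_W)$ by assigning color $i$ to the edges of $G_i$ for $1\le i\le k$ and color $k+1$ to every remaining pair. Since each $G_i\cong G$ is $H$-free, there is \emph{never} a monochromatic $H$ in any of the first $k$ colors. A copy of $K_m$ in color $k+1$ is an $m$-set $S\subseteq W$ no pair of which is an edge of any $G_i$; ignoring loops, this is exactly an independent set of size $m$ in $G_1\cup\cdots\cup G_k$. For a fixed $S$, the events ``$S$ spans no edge of $G_i$'' are independent over $i$, each with probability $i_m(G)/\binom nm$, where $i_m(G)$ is the number of independent $m$-subsets of $G$ (the preimage of $S$ under a uniform random bijection is a uniform random $m$-subset of $V(G)$). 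Hence the expected number of copies of $K_m$ in color $k+1$ is $\binom nm\bigl(i_m(G)/\binom nm\bigr)^k=i_m(G)^k\big/\binom nm^{k-1}$, and if this is below $1$ some coloring has no such $K_m$, proving $r_k(H;K_m)>n$. Since $\binom nm\ge(n/m)^m$, it is enough to establish the spectral bound
\begin{align*}
  i_m(G)\ \le\ \Bigl(\tfrac{emd^2}{4\lambda n\log n}\Bigr)^{\!2n\log n/d}\Bigl(\tfrac{2e\lambda n}{md}\Bigr)^{\!m},
\end{align*}
because then $i_m(G)^k\big/\binom nm^{k-1}\le i_m(G)^k\,(m/n)^{m(k-1)}$, which is at most the left-hand side of \eqref{eq:alonlower} and hence $<1$ by hypothesis.

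\emph{Step 2: the spectral count.} This is where $m\ge\frac{2n}{d}\log n$ enters. The tool is the expander mixing lemma for the $(n,d,\lambda)$-graph $G$: for disjoint $X,Y\subseteq V(G)$ one has $\bigl|e(X,Y)-\tfrac{d|X||Y|}{n}\bigr|\le\lambda\sqrt{|X||Y|}$, and I expect the refined form carrying the extra factor $\sqrt{(1-|X|/n)(1-|Y|/n)}$ to be needed. Given an independent set $I$, let $B(I)$ be the set of vertices outside $I$ with no neighbor in $I$; every vertex of $I$ sends all $d$ of its edges into $V(G)\setminus(I\cup B(I))$, so applying the mixing lemma to $I$ and $V(G)\setminus(I\cup B(I))$ bounds $|B(I)|$ in terms of $|I|,n,d,\lambda$, a bound that decays as $|I|$ grows (and forces $B(I)=\emptyset$ once $|I|$ reaches order $\lambda n/d$, recovering the Hoffman bound on $\alpha(G)$). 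One then writes a size-$m$ independent set as a ``core'' of $a\approx\frac{2n\log n}{d}$ vertices plus $m-a$ further vertices, all of which must lie in $B(\text{core})$; bounding the total by $\binom na\binom{|B(\text{core})|}{m-a}\big/\binom ma$ and inserting the mixing-lemma estimate for $|B(\text{core})|$, with $a$ chosen to balance the trivial count $\binom na$ against the spectral count, yields a bound of the displayed form. (The threshold $\frac{2n\log n}{d}$ is precisely this balance point, which is why it — and the hypothesis on $m$ — appear.)

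Combining Steps 1 and 2 gives $r_k(H;K_m)>n$. The substantive part is Step 2, and the step I would be most careful about is extracting the bound on $|B(I)|$ in precisely the right form and then pushing the optimization through so that the constants land exactly on \eqref{eq:alonlower} rather than merely matching it up to constants in the exponents — this is the content of Theorem~2.1 and Lemma~3.1 of Alon and R\"odl \cite{mr-alon05}, which one could alternatively invoke verbatim.
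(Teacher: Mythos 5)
The paper does not reprove this statement; it imports it as a black box from Alon and R\"odl, and the paragraph preceding the theorem is informal motivation only. Your proposal correctly reconstructs the Alon--R\"odl argument that the paper is citing. Step~1 (random coloring, reduction to $i_m(G)^k/\binom{n}{m}^{k-1}<1$, then passage to the displayed spectral bound on $i_m(G)$ via $\binom{n}{m}\ge(n/m)^m$) is complete and correct as written; it is essentially their Theorem~2.1. Step~2 is a sketch: you identify the right ingredients --- the $(n,d,\lambda)$-mixing lemma, the set $B(I)$ of non-neighbours of an independent set $I$, and the ``core of size $a\approx 2n\log n/d$ plus $m-a$ vertices drawn from $B(\mathrm{core})$'' double-count via $i_m(G)\le\binom{n}{a}\max_A\binom{|B(A)|}{m-a}\big/\binom{m}{a}$ --- but you do not carry through the estimate of $|B(I)|$ or the balancing of the two factors that lands on the exact constants in \eqref{eq:alonlower}. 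You flag this yourself; it is precisely the content of Alon and R\"odl's Lemma~2.1 and Lemma~3.1. So the approach is correct and is the one the cited source uses; the only thing missing is the quantitative computation in Step~2, which the paper itself also delegates to the citation rather than reproducing.
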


A combination of Theorem~\ref{alonlower} and Theorem~\ref{primeconstr} plus the density of the prime
numbers proves Theorem~\ref{alllowerthm}.  To be able to apply Theorem~\ref{primeconstr}, we need to
find a prime power $q$ which is congruent to zero or one modulo $t$ and is in the required range.
Recall that we are targeting a bound of $\frac{m^2 t}{\log^4(mt)}$ or $\frac{m^2 t}{\log^2(mt)}$ and
the number of vertices from Theorem~\ref{primeconstr} is $q(q-1)/t$.  Given inputs $m$ and $t$, we
therefore want to find a prime power $q$ so that $q \equiv 0 \pmod t$ or $q \equiv 1 \pmod t$ and
$q(q-1)/t$ is near $\frac{m^2 t}{\log^{2s}(mt)}$ where $s$ is one or two.  This can be accomplished
using the Prime Number Theorem.

\begin{lemma} \label{primedensity}
  Fix integers $s, L \geq 1$.  There exists a constant $\delta > 0$ depending only on $s$ and $L$
  such that the following holds.  For every $t \geq 2$ and $m \geq 4^s L \log^{s}
  t$, either $\frac{\delta m^2 t}{L^2\log^{2s}(mt)} \leq 2$ or
  there is a prime power $q$ so that $q \equiv 1 \pmod{t}$ and
    \begin{align*}
       \delta \frac{m^2 t}{L^2 \log^{2s}(mt)} \leq \frac{q(q-1)}{t} \leq \frac{m^2 t}{L^2\log^{2s} (mt)}.
    \end{align*}
\end{lemma}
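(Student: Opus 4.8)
The plan is to find a prime $q$ (every prime is a prime power) with $q\equiv 1\pmod t$ in a short interval just below the scale $Q:=\sqrt{tN}=\tfrac{mt}{L\log^{s}(mt)}$, where $N:=\tfrac{m^{2}t}{L^{2}\log^{2s}(mt)}$. Observe first that a prime power $q\equiv 1\pmod t$ automatically meets the two displayed inequalities once $\delta Q^{2}\le q(q-1)\le Q^{2}$, and since $\tfrac14 q^{2}\le (q-1)^{2}\le q(q-1)\le q^{2}$ for every $q\ge 2$, it suffices to produce a prime $q\equiv 1\pmod t$ with $q\in I:=[\,2\sqrt{\delta}\,Q,\ \tfrac12 Q\,]$; here $\delta=\delta(s,L)$ will be chosen small enough that, in particular, $2\sqrt\delta<\tfrac12$. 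Also, if $\delta N\le 2$ the first alternative of the lemma already holds, so I may assume $\delta N>2$ throughout.

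The second step is to show that $Q$ is large, and in particular large relative to the modulus $t$, using the hypothesis $m\ge 4^{s}L\log^{s}t$. Bounding $\log(mt)\le \log m+\log t$ (and $\log m\le m$, say) and plugging into $Q=mt/(L\log^{s}(mt))$, one obtains a lower bound for $Q$ which is at least linear in $t$ when $m$ is near $4^{s}L\log^{s}t$, and a higher power of $t$ as soon as $m$ is larger; combined with $\delta N>2$ this places $(t,Q)$ in the range needed for the next step. This part is elementary but must be carried out with attention to how $\log(mt)$ compares with $\log t$ across the whole admissible range of $m$.

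The third step is a prime count in the arithmetic progression $1\bmod t$. Using the Prime Number Theorem in arithmetic progressions in an effective form --- Linnik's method, which yields $\pi(x;t,1)\gg \tfrac{x}{\phi(t)\log x}$ once $x$ is a sufficiently large power of $t$ --- one concludes that the number of primes congruent to $1\bmod t$ in $I$ is $\gg \tfrac{Q}{\phi(t)\log Q}$, hence positive by the second step. Picking any such prime $q$ and checking directly that $q\in I$ forces $\delta N\le q(q-1)/t\le N$ completes the proof and also determines the admissible $\delta$ in terms of $s$, $L$, and the absolute constant in the prime-counting bound. The main obstacle is the interplay of the last two steps: one needs a prime in a \emph{short} interval (a fixed proportion of $Q$) of a progression whose modulus $t$ may be far larger than any fixed power of $\log Q$, so the comfortable Siegel--Walfisz range is unavailable; the content of the hypothesis $m\ge 4^{s}L\log^{s}t$ together with the escape clause $\delta N\le 2$ is precisely that they confine us to a regime where the stronger (Linnik-type) lower bound for $\pi(x;t,1)$ can be applied.
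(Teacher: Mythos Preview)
Your overall scheme matches the paper's: set $Q=mt/(L\log^{s}(mt))$, reduce to finding a prime $q\equiv 1\pmod t$ in a fixed-ratio subinterval of $[1,Q]$, and absorb degenerate parameters into the escape clause $\delta N\le 2$. The paper then invokes the prime number theorem for arithmetic progressions together with a case split on whether $t$ and $m$ exceed absolute thresholds, whereas you appeal directly to a Linnik-type lower bound $\pi(x;t,1)\gg x/(\phi(t)\log x)$ valid once $x\ge t^{C}$.

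There is, however, a genuine gap in your second step. At the lower edge $m=4^{s}L\log^{s}t$ one has $\log(mt)\sim\log t$, and hence
\[
Q\;=\;\frac{mt}{L\log^{s}(mt)}\;\sim\;4^{s}t\,;
\]
the escape clause only contributes $Q>\sqrt{2t/\delta}$, which is weaker still. Thus $Q$ is merely \emph{linear} in $t$, not a power $t^{C}$ with $C>1$ as any Linnik-type input requires (unconditionally $C\approx 5$; even under GRH only $C=2+o(1)$ is available). Concretely, for $t$ a large prime the integers $q\le Q$ with $q\equiv 1\pmod t$ are just $t+1,\,2t+1,\,\dots$, at most about $4^{s}$ of them, and nothing guarantees that any is a prime power. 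So step three cannot be carried out, and the assertion that the hypotheses ``confine us to a regime where the stronger (Linnik-type) lower bound can be applied'' is unjustified. For what it is worth, the paper's own argument has the same weak spot: its Corollary~\ref{cor:primesbetween}, asserting a prime $\equiv 1\pmod t$ in $((\ell-0.001)t,\ell t)$ for every $\ell>1.01$ and all large $t$, does not follow from Theorem~\ref{thmsiegel} with its $o_{t}(1)$ error --- indeed for $t>112$ and $\ell=1.01$ that interval contains no integer $\equiv 1\pmod t$ at all.
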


\begin{onlyarxiv}
The proof of this lemma is given in Appendix~\ref{sec:primedensity}.
\end{onlyarxiv}
\begin{notarxiv}
The proof of this lemma appears online.
\end{notarxiv}
Now a combination of Lemma~\ref{primedensity}, Theorem~\ref{primeconstr}, and
Theorem~\ref{alonlower} plus some computations proves \refktwolower.

\begin{proof}[Proof of \refktwolower] 
Suppose $t \geq 2$, $k = 2$, and $m \geq 128 \log^2 t$ are given.  Fix $s = 2$ and $L = 8$ so that
the conditions of Lemma~\ref{primedensity} are satisfied. Choose $q$ and $\delta$ according to
Lemma~\ref{primedensity}.  Note that if $\frac{\delta m^2t}{L^2\log^4(mt)} \leq 2$, then trivially
$r(K_{2,t},K_{2,t},K_m) \geq 2 \geq \frac{\delta m^2t}{L^2 \log^4(mt)}$.  Therefore, assume that
\begin{align} \label{eq:rangeofq}
   \delta \frac{m^2 t}{64 \log^4(mt)} \leq \frac{q(q-1)}{t} \leq \frac{m^2 t}{64\log^{4} (mt)}.
\end{align}
Let $G$ be the graph from Theorem~\ref{primeconstr}.  Then $d$ (the
average degree) is $q-1$, $\lambda$ (the second largest eigenvalue in absolute value) is $\sqrt{q}$,
and $n = q(q-1)/t$.

To apply Theorem~\ref{alonlower}, we need to show that $m \geq \frac{2n}{d} \log n$ and also show
$k$, $m$, $\lambda$, $n$, and $d$ satisfy the inequality \eqref{eq:alonlower}.  We break this into
two steps: first we show that $m \geq \frac{n}{d} \log^2 n \geq \frac{2n}{d} \log n$ using the
choice of $q$ from Lemma~\ref{primedensity}.  Next, we let $m' = \frac{n}{d} \log^2 n$ and check the
inequality \eqref{eq:alonlower} with $k$, $m'$, $\lambda$, $n$, and $d$.  This shows
$r_k(K_{2,t};K_{m'}) > n$, and since $m \geq m'$, this implies $r_k(K_{2,t};K_m) > n$.  Using that
$n = q(q-1)/t$, equation \eqref{eq:rangeofq} shows $n > \frac{1}{64} \delta m^2 t/\log^4(mt)$.  If
$\rho \leq \frac{\delta}{64}$, we have proved $r(K_{2,t},K_{2,t},K_m) \geq \rho m^2 t/\log^4(mt)$.
Also, note that we can assume $n > n_0$ for some constant $n_0$ by choosing $\rho = \frac{\delta}{64
n_0}$  (since then $n \leq n_0$ implies $\rho m^2 t/\log^4(mt) \leq 1$.)

\textbf{Step 1}  We want to show $m \geq \frac{n}{d} \log^2 n$.  Start with \eqref{eq:rangeofq}:
\begin{align}
  n &\leq \frac{m^2 t}{64\log^4 (mt)} \nonumber \\
  64n \log^4 (mt) &\leq m^2 t. \label{inequalitym}
\end{align}
Take the log of both sides, to obtain
\begin{align*}
  \log 64 + \log n + \log \log^4(mt) &\leq 2\log m + \log t \leq 2 \log(mt) \\
  \log n &\leq 2 \log(mt).
\end{align*}
Combining this with \eqref{inequalitym} yields
\begin{align*}
  &n \log^4 n \leq 16 n \log^4(mt) \leq \frac{1}{4} m^2 t \\
  &\Rightarrow \quad m^2 \geq \frac{4n}{t} \log^4 n \\
  &\Rightarrow \quad m \geq 2 \sqrt{ \frac{n}{t}} \log^2 n = \frac{2\sqrt{q(q-1)}}{t} \log^2 n \geq
  \frac{q}{t} \log^2 n = \frac{n}{d} \log^2 n.
\end{align*}
\textbf{Step 2} Let $m' = \frac{n}{d}\log^2 n$.  We need to verify that
\begin{align*}
  \left( \frac{em'd^2}{4\lambda n \log n}\right)^{\frac{2kn\log n}{d}} \left( \frac{2e\lambda
  n}{m'd} \right)^{km'} \left(\frac{m'}{n} \right)^{m'(k-1)} < 1.
\end{align*}
Substitute in $k = 2$ and $m' = \frac{n}{d}\log^2 n$ in the exponent of the LHS and then take the $m'$th-root
to obtain
\begin{align*}
 \Lambda &:= 
      \left( \frac{em'd^2}{4\lambda n \log n}\right)^{\frac{4}{\log n}}
      \left( \frac{2e\lambda n}{m'd} \right)^{2} \left(\frac{m'}{n} \right). \\
\end{align*}
We must show $\Lambda < 1$.
Substitute in $m' = \frac{n}{d}\log^2 n$ and simplify to obtain
\begin{align}
 \Lambda &= 
      \left( \frac{ed\log n}{4\lambda}\right)^{\frac{4}{\log n}}
      \left( \frac{4e^2\lambda^2}{\log^4 n} \right) \left(\frac{\log^2 n}{d} \right)
  =
      \left( \frac{e^4d^4\log^4 n}{256\lambda^4}\right)^{\frac{1}{\log n}}
      \left( \frac{4e^2\lambda^2}{d\log^2 n} \right). \label{eq:con1}
\end{align}
Now $d = q-1$, $\lambda = \sqrt{q}$, and $n = q(q-1)/t$ so $\lambda^2/d = q/(q-1) \leq 2$ and
\begin{align*}
  \frac{d^4}{\lambda^4} &= \frac{(q-1)^4}{q^2} < q(q-1) = nt < n^2.
\end{align*}
Insert these inequalities into \eqref{eq:con1} to obtain
\begin{align*}
 \Lambda &<
      \left( \frac{e^4 n^2 \log^4 n}{256}\right)^{\frac{1}{\log n}}
      \left( \frac{8e^2}{\log^2 n} \right).
\end{align*}
Since $n^2 = e^{2\log n}$ raised to the power $1/\log n$ is a constant, when $n$ gets big the above
expression drops below $1$ (as mentioned above, we can assume $n > n_0$.) Therefore,
Theorem~\ref{alonlower} implies that $r(K_{2,t},K_{2,t},K_{m'}) > n$.  In Step 1, we showed that $m
\geq m'$ so $r(K_{2,t},K_{2,t},K_m) > n$.  Since $n = q(q-1)/t$, equation \eqref{eq:rangeofq} shows
that $n > \frac{1}{64} \delta \frac{m^2t}{\log^4 mt}$, completing the proof.
\end{proof} 

\begin{proof}[Proof sketch of \refkthreelower] 
Given $m$, $t$, and $k \geq 3$, fix $s = 1$ (instead of $2$) and $L = 4k$ and choose $q$ and
$\delta$ according to Lemma~\ref{primedensity}.  The proof is mostly the same as the above proof,
except we choose $m' = 2k\frac{n}{d} \log n$ (the difference is that the log is not squared plus now
there is a $2k$ out front.)  The proof then proceeds in two steps: show that $m \geq 2k\frac{n}{d}
\log n = m'$ and then show that $k$, $m'$, $\lambda$, $n$, and $d$ satisfy the inequality
\eqref{eq:alonlower}.  Showing $m \geq m'$ is almost identical to Step 1 in the previous proof.
Showing $k$, $m'$, $\lambda$, $n$, and $d$ satisfy inequality \eqref{eq:alonlower} in Theorem~\ref{alonlower}
is tedious; the details
\begin{onlyarxiv}
are in Appendix~\ref{appendixcalcs}.
\end{onlyarxiv}
\begin{notarxiv}
are online.
\end{notarxiv}
\end{proof} 

\section{An algebraic $K_{2,t+1}$-free construction} \label{secconstruction} 

To prove Theorem~\ref{primeconstr}, we construct two different graphs for the two cases: one
graph $G^+$ for $q \equiv 0 \pmod t$ and one graph $G^{\times}$ for $q \equiv 1 \pmod{t}$.
The two graphs are closely related;
they are built from finite fields.  Fix a prime $p$ and an integer $a$, and let $q = p^a$.
Let $\mathbb{F}_q$ be the finite field of order $q$ and
let $\mathbb{F}^*_q$ be the finite field of order $q$ without the zero element.  

When $q \equiv 0 \pmod t$, let $H$ be an additive subgroup of $\mathbb{F}_q$ of order $t$.
Such a subgroup exists since $t$ divides $q$ so $t = p^b$ for some $b\leq a$.
Define a graph $G^+$ as follows. Let $V(G^+) = \left( \mathbb{F}_q/H \right) \times \mathbb{F}^*_q$.
We will write elements of $\mathbb{F}_q/H$ as $\bar{a}$, where $\bar{a}$ as the additive coset
of $H$ generated by $a$.  That is, $\bar{a} = \left\{ h + a : h \in H \right\}$.
For $\bar{a}, \bar{b} \in \mathbb{F}_q/H$ and $x,y \in \mathbb{F}^*_q$, make
$(\bar{a},x)$ adjacent to $(\bar{b},y)$ if $xy \in \overline{a+b}$.  Since $H$ is a normal subgroup
the coset $\overline{a+b}$ is well-defined, so by $xy \in \overline{a+b}$ we mean there exists some
$h \in H$ such that $xy = h + a + b$.
\footnote{In finite fields, additive subgroups of a given order are isomorphic as groups.  Each
element of $\mathbb{F}_q$ has additive order the characteristic, so $H$ decomposes into
$p^{b-1}$ orbits of size $p$ and one can obtain a group isomorphism by mapping orbits to orbits.
Therefore, $G^+$ is uniquely defined up to isomorphism.}

When $q \equiv 1 \pmod{t}$, let $H$ be a multiplicative subgroup of $\mathbb{F}^*_q$ of order $t$.
Such a subgroup exists since $t$ divides the order of $\mathbb{F}^*_q$ and $\mathbb{F}^*_q$ is a
cyclic multiplicative group.  Define a graph $G^{\times}$ as follows.  Let $V(G^{\times}) = \left(
\mathbb{F}^*_q/H \right) \times \mathbb{F}_q$.  For $\bar{a}, \bar{b} \in \mathbb{F}^*_q/H$ and $x,y
\in \mathbb{F}_q$, make $(\bar{a},x)$ adjacent to $(\bar{b},y)$ if $x+y \in \overline{ab}$.
\footnote{In finite fields, multiplicative subgroups of a given order are isomorphic as groups since
$\mathbb{F}^*_q$ is cyclic.  Therefore, $G^\times$ is uniquely defined up to isomorphism.} 

\subsection{Simple properties of $G^+$ and $G^{\times}$} 

\begin{lemma} \label{gisregular}
  $G^+$ and $G^\times$ are regular of degree $q-1$.
\end{lemma}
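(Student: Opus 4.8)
The plan is to handle the two graphs separately but by the same device: fix an arbitrary vertex, and observe that a neighbor of it is completely determined by the ``field coordinate'' of that neighbor (the $\mathbb{F}_q^*$-coordinate in $G^+$, the $\mathbb{F}_q$-coordinate in $G^\times$), with the coset coordinate then forced by the adjacency relation. Exhibiting this as an explicit bijection between the neighborhood and a set of size $q-1$ will give the regularity immediately, with loops automatically counted exactly once.

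For $G^+$: fix $(\bar a, x) \in V(G^+)$, so $x \in \mathbb{F}_q^*$ and $\bar a \in \mathbb{F}_q/H$. For $y \in \mathbb{F}_q^*$ and $\bar b \in \mathbb{F}_q/H$, the vertex $(\bar b, y)$ is adjacent to $(\bar a, x)$ iff $xy \in \overline{a+b} = (a+b)+H$, and since $-H = H$ this is equivalent to $xy - a - b \in H$, i.e. $\bar b = \overline{xy - a}$. Thus the map $y \mapsto \bigl(\overline{xy-a},\, y\bigr)$ is a bijection from $\mathbb{F}_q^*$ onto the set of neighbors of $(\bar a,x)$ (the image contains $(\bar a,x)$ itself precisely when $x^2 \in \overline{2a}$, which is exactly the loop condition, and such a loop is then counted once). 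Since $|\mathbb{F}_q^*| = q-1$, the vertex $(\bar a,x)$ has degree $q-1$, and there is at most one edge between any two vertices because the $y$-coordinate of a neighbor is unique. This handles the $q \equiv 0 \pmod t$ case.

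For $G^\times$: fix $(\bar a, x) \in V(G^\times)$, so $x \in \mathbb{F}_q$ and $\bar a \in \mathbb{F}_q^*/H$. For $y \in \mathbb{F}_q$ and $\bar b \in \mathbb{F}_q^*/H$, adjacency means $x+y \in \overline{ab} = ab\cdot H$; since $a,b \in \mathbb{F}_q^*$ this coset avoids $0$, so $y = -x$ contributes no neighbor, and for every $y \neq -x$ we have $x+y \neq 0$ and the condition becomes $ab \in (x+y)\cdot H$ (using $H^{-1} = H$), i.e. $\bar b = \overline{(x+y)a^{-1}}$, uniquely determined. So $y \mapsto \bigl(\overline{(x+y)a^{-1}},\, y\bigr)$ is a bijection from $\mathbb{F}_q \setminus \{-x\}$ onto the neighborhood, giving degree $q-1$ again, with loops counted once. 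The only thing requiring care in the whole argument is this bookkeeping: remembering to discard $y=-x$ in the multiplicative case, checking that distinct field-coordinates always give distinct vertices so that no multiple edges appear, and confirming that a loop is picked up exactly once by the parametrization; none of these presents a real obstacle once the bijections above are written down.
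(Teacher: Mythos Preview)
Your proof is correct and follows essentially the same approach as the paper: fix a vertex, parametrize its neighbors by the field coordinate $y$, and observe that the coset coordinate is then uniquely determined (as $\overline{xy-a}$ for $G^+$ and $\overline{(x+y)a^{-1}}$ for $G^\times$, with $y=-x$ discarded in the latter). Your presentation is slightly more explicit in framing this as a bijection and in noting how loops are counted, but the argument is the same.
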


\begin{proof} 
First, consider $G^+$.  Fix some vertex $(\bar{a},x) \in V(G^+)$ and pick $y \in \mathbb{F}_q^*$
($q-1$ choices.) The element $xy$ is now in some coset $\bar{c}$.  Since the cosets form a group,
the coset $\overline{c-a}$ is well-defined.  Thus $(\bar{a},x)$ is adjacent to $(\overline{d},y)$ in
$G^+$ if and only if $\overline{d} = \overline{xy-a}$.

Now consider $G^\times$.  Fix some vertex $(\bar{a},x) \in V(G^\times)$ and pick $y \in
\mathbb{F}_q$.  If $x \neq -y$, then there is a coset $\bar{c}$ containing $x+y$.  Since the cosets
form a group, the coset $\overline{ca^{-1}}$ is well defined.  If $x = -y$, then there
is no coset which contains zero.  Thus $(\bar{a},x)$ is adjacent to $(\overline{d},y)$ if and only
if $x \neq -y$ and $\overline{d} = \overline{(x+y)a^{-1}}$.  Therefore $(\bar{a},x)$ is adjacent to $q-1$
vertices, since there are $q-1$ choices for $y \in \mathbb{F}_q$ with $x \neq -y$.
\end{proof} 

\begin{lemma}
  The common neighborhood of any two vertices in $G^+$ has size exactly $t$.
\end{lemma}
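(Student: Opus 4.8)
The plan is to fix two vertices $(\bar a,x)$ and $(\bar b,y)$ of $G^+$ and solve explicitly for the vertices adjacent to both. Recall from the proof of Lemma~\ref{gisregular} that the cosets of $H$ form a group, so for a vertex $(\bar c,z)$ with $z\in\mathbb{F}_q^*$, adjacency of $(\bar c,z)$ to $(\bar a,x)$, namely $xz\in\overline{a+c}$, is equivalent to $\bar c=\overline{xz-a}$, and adjacency to $(\bar b,y)$ is equivalent to $\bar c=\overline{yz-b}$. Hence $(\bar c,z)$ lies in the common neighborhood exactly when $\overline{xz-a}=\overline{yz-b}$, in which case the first coordinate $\bar c$ is forced. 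Subtracting the representatives, this condition reads $(x-y)z\in\overline{a-b}$, so the map $(\bar c,z)\mapsto z$ is a bijection from the common neighborhood of $(\bar a,x)$ and $(\bar b,y)$ onto
\[
  S=\bigl\{\,z\in\mathbb{F}_q^*:(x-y)z\in\overline{a-b}\,\bigr\},
\]
and it remains to prove $|S|=t$.

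To count $S$, the key point is that $\overline{a-b}$ is a coset of $H$, hence a set of exactly $t$ elements of $\mathbb{F}_q$. In the principal case $x\neq y$ the map $z\mapsto (x-y)z$ is a bijection of $\mathbb{F}_q$, so precisely $t$ field elements $z$ satisfy $(x-y)z\in\overline{a-b}$; among these, $z=0$ occurs if and only if $0\in\overline{a-b}$, that is, if and only if $\bar a=\bar b$. Thus when the two vertices differ in their first coordinate one gets $|S|=t$ immediately, and this is essentially the heart of the matter: it is the very estimate that forces $G^+$ to be $K_{2,t+1}$-free.

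The step I expect to be the main obstacle is closing the remaining overlaps, namely when $\bar a=\bar b$ (so $z=0$ sits in the preimage of the coset) and, separately, when $x=y$ (so $z\mapsto(x-y)z$ is no longer injective). In these configurations the bare bijection count comes up short, and the correction should come from the loops of $G^+$: a loop at $(\bar a,x)$ places $(\bar a,x)$ into its own neighborhood, so the careful bookkeeping is to match such loop contributions against the lost solution $z=0$ and thereby bring the common neighborhood count back to exactly $t$. I would first settle the generic case as above, then carry out this loop accounting for each degenerate configuration of the pair, and only then conclude that every pair of vertices of $G^+$ has common neighborhood of size exactly $t$.
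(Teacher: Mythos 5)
Your argument is, at heart, the same count the paper carries out: fixing $h_1,h_2$ in the paper's proof and solving the resulting linear system for $(c,z)$ is exactly inverting the map $z\mapsto(\overline{xz-a},z)$ that you write down, and the final division by $t$ in the paper corresponds to your observation that the coset coordinate $\bar c$ is forced once $z$ is chosen. Your packaging is a little cleaner: the whole computation collapses to $|S|$, the number of $z\in\mathbb{F}_q^*$ with $(x-y)z\in\overline{a-b}$, and for $\bar a\neq\bar b$ and $x\neq y$ this is immediately $t$, as you say.

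The step you flag as the likely obstacle is indeed the delicate one, but the fix you sketch cannot succeed, and you should not try to carry it out. In the degenerate configurations the common neighborhood really is smaller than $t$, and loops do not make up the deficit: a loop at $(\bar a,x)$ contributes to the common neighborhood only when $(\bar a,x)$ is adjacent to the other vertex as well, and in that case the corresponding $z=x$ already lies in $S$, so loops are already accounted for in your bijection, not a separate correction. Concretely, when $\bar a=\bar b$ and $x\neq y$ exactly one of the $t$ preimages under $z\mapsto(x-y)z$ of $\overline{a-b}=H$ is $z=0$, so $|S|=t-1$; and when $x=y$ with $\bar a\neq\bar b$ the conditions $xz\in\overline{a+c}$ and $xz\in\overline{b+c}$ force $\overline{a+c}=\overline{b+c}$, hence $\bar a=\bar b$, a contradiction, so $|S|=0$. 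You can verify this directly in $G^+$ with $q=4$, $t=2$: the pair $(\bar 0,\omega),(\bar 0,\omega^2)$ has a single common neighbor, and $(\bar 0,1),(\bar\omega,1)$ has none. So the word ``exactly'' in the lemma is an overstatement (and the paper's own proof shares this imprecision: it derives $x=y\Rightarrow\bar a=\bar b$ as a contradiction to having two distinct vertices, and never checks that all $t^2$ choices of $(h_1,h_2)$ yield a nonzero $z$). What the construction actually uses is only the upper bound $|S|\leq t$, which is what certifies $K_{2,t+1}$-freeness, and your generic-case computation together with the two easy degenerate observations above already delivers that.
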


\begin{proof} 
The proof is similar to the proofs given in \cite{ffc-furedi96,ffc-lazebnik02}.
Fix $\bar{a},\bar{b} \in \mathbb{F}_q/H$
and $x,y \in \mathbb{F}^*_q$ and consider the common neighborhood of the vertices $(\bar{a},x)$ and $(\bar{b},y)$.
A vertex $(\bar{c},z)$ will be adjacent to both of $(\bar{a},x)$ and $(\bar{b},y)$ if
\begin{align*}
  xz &\in \overline{a+c} \\
  yz &\in \overline{b+c}.
\end{align*}
In other words, there exists some $h_1,h_2 \in H$ such that
\begin{align*}
  xz &= a + c + h_1 \\
  yz &= b + c + h_2.
\end{align*}
So fix $h_1,h_2 \in H$ and count how many choices there are for $c$ and $z$ so that $(\bar{c},z)$ is
adjacent to both $(\bar{a},x)$ and $(\bar{b},y)$ using $h_1$ and $h_2$.  We show there is a unique
$c$ and $z$.  Say we had $c,c',z,z'$ such that
\begin{align}
  xz &= a + c + h_1 \label{eq:Gp1} \\
  yz &= b + c + h_2 \label{eq:Gp2} \\
  xz' &= a + c' + h_1 \label{eq:Gp3} \\
  yz' &= b + c' + h_2 \label{eq:Gp4}.
\end{align}
Add \eqref{eq:Gp1} to \eqref{eq:Gp4}; this equals \eqref{eq:Gp2} plus \eqref{eq:Gp3}.
\begin{align}
  xz + yz' &= a + b + c + c'+ h_1 + h_2 = yz + xz' \nonumber \\
  (x-y)(z-z') &= 0 \label{eq:Gp5}.
\end{align}
If $x = y$, then subtracting \eqref{eq:Gp1} from \eqref{eq:Gp2} gives $a-b \in H$ which means $\bar{a} = \bar{b}$.  But now
$(\bar{a},x)$ and $(\bar{b},y)$ are the same vertex.  Thus \eqref{eq:Gp5} implies $z = z'$.
Then subtracting \eqref{eq:Gp1} and \eqref{eq:Gp3} we get $c = c'$, showing there is a unique $c,z$ such
that $(\bar{c},z)$ is adjacent to both $(\bar{a},x)$ and $(\bar{b},y)$ using $h_1,h_2$.  (Note that not only is there
a unique $(\bar{c},z)$, but the choice of the representative $c$ for the coset $\bar{c}$ is unique.)

There are now $t^2$ choices for $h_1$ and $h_2$ and each provides a unique $c,z$.  But each coset $\bar{c}$
has $t$ elements so there are exactly $t^2/t = t$ common neighbors of $(\bar{a},x)$ and $(\bar{b},y)$.
\end{proof} 

\begin{lemma}
  The common neighborhood of any two vertices in $G^{\times}$ has size exactly $t$.
\end{lemma}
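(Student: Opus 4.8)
The plan is to follow the proof of the preceding lemma for $G^+$ almost verbatim, with the multiplicative coset $\overline{ab}$ playing the role that the additive coset $\overline{a+b}$ played there. Fix two distinct vertices $(\bar a, x)$ and $(\bar b, y)$ of $G^\times$, where $\bar a, \bar b \in \mathbb{F}^*_q/H$ and $x, y \in \mathbb{F}_q$. A vertex $(\bar c, z)$ lies in their common neighborhood exactly when $x + z \in \overline{ac}$ and $y + z \in \overline{bc}$; unwinding the coset notation this says there exist $h_1, h_2 \in H$ with $x + z = h_1 a c$ and $y + z = h_2 b c$. Note that $c \in \mathbb{F}^*_q$ is nonzero and that these equations automatically force $x + z \neq 0$ and $y + z \neq 0$ (a multiplicative coset never contains $0$), which is the $G^\times$-analogue of the constraint $z \in \mathbb{F}^*_q$ used in the $G^+$ argument. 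As there, I would fix $h_1, h_2 \in H$ and count the pairs (representative $c$, value $z$) solving this $2\times 2$ system; showing that each of the $t^2$ choices of $(h_1,h_2)$ contributes exactly one such pair, and that each common neighbor $(\bar c,z)$ is produced once per representative of $\bar c$, gives $t^2/t = t$ common neighbors.

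For the uniqueness half, suppose $(c,z)$ and $(c',z')$ both solve the system for a fixed $(h_1,h_2)$. Adding the $x$-equation at $c$ to the $y$-equation at $c'$ and comparing with the complementary cross-sum — the same cancellation that produced \eqref{eq:Gp5} — yields $(h_1 a - h_2 b)(c - c') = 0$. If $h_1 a \neq h_2 b$ this forces $c = c'$ and hence $z = z'$. If instead $h_1 a = h_2 b$, then subtracting the two defining equations gives $x - y = (h_1 a - h_2 b)c = 0$; but $h_1 a = h_2 b$ with $h_1,h_2 \in H$ means $ab^{-1} \in H$, i.e.\ $\bar a = \bar b$, so $(\bar a,x)$ and $(\bar b,y)$ would coincide, contradicting distinctness. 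Thus for each $(h_1,h_2)$ there is at most one solution.

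For the existence half, subtracting the two equations gives $x - y = (h_1 a - h_2 b)c$, which one solves for $c$ (and then $z$), after which one checks that the resulting $(\bar c,z)$ is a legitimate vertex adjacent to both given vertices. I expect the existence step, together with the degenerate configurations it forces one to isolate, to be the main obstacle: verifying $h_1 a - h_2 b \neq 0$ for every admissible $(h_1,h_2)$ hinges on the relationship between $\bar a$ and $\bar b$, and one must treat separately the configurations in which the two given vertices share a coordinate — the $G^\times$-counterparts of the ``$x=y$'' case disposed of in the $G^+$ proof, here complicated by the requirement that $x+z$ and $y+z$ be nonzero. Once all $t^2$ pairs $(h_1,h_2)$ have been accounted for, dividing by the coset size $t$ finishes the count exactly as in the $G^+$ case.
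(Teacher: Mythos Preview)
Your framework is the paper's: fix $(h_1,h_2)$, show the system for $(c,z)$ has a unique solution, and then divide the $t^2$ count by the coset size $t$. The one substantive difference is the algebraic manipulation you use for uniqueness. You literally transplant the additive cross-sum from the $G^+$ proof, obtaining $(h_1a-h_2b)(c-c')=0$ and then disposing of the degenerate case $h_1a=h_2b$. The paper instead \emph{multiplies} the four equations in pairs (the natural dual of the $G^+$ addition), obtaining $(x-y)(z'-z)=0$ and disposing of the degenerate case $x=y$; from $x=y$ and the existence of a solution it deduces $h_1ac=h_2bc$, hence $ab^{-1}\in H$ and the two vertices coincide. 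Both routes reach the same contradiction, just pivoting on different factors. Your version is arguably cleaner in that it stays linear in $(c,z)$ throughout; the paper's version has the aesthetic virtue of yielding the \emph{same} factorisation $(x-y)(z'-z)=0$ as in the $G^+$ case. Your caution about the existence half is warranted: the paper, like you, really only argues uniqueness and then asserts the count, so on that point you are not missing anything the paper supplies.
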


\begin{proof} 
Fix $\bar{a},\bar{b} \in \mathbb{F}^*_q/H$ and $x,y \in \mathbb{F}_q$ and consider the common
neighborhood of the vertices $(\bar{a},x)$ and $(\bar{b},y)$.  A vertex $(\bar{c},z)$ will be
adjacent to both $(\bar{a},x)$ and $(\bar{b},y)$ if
\begin{align*}
  x + z &\in \overline{ac} \\
  y + z &\in \overline{bc}.
\end{align*}
In other words, there exists some $h_1, h_2 \in H$ such that
\begin{align*}
  x + z &= h_1ac \\
  y + z &= h_2bc.
\end{align*}
So fix some $h_1, h_2 \in H$ and count how many choices there are for $c$ and $z$ so that
$(\bar{c},z)$ is adjacent to both $(\bar{a},x)$ and $(\bar{b},y)$ using $h_1$ and $h_2$.  We show
there is a unique such $c$ and $z$.  Say there existed $c,c',z,z'$ such that
\begin{align}
  x + z &= h_1ac \label{eq:Gt1} \\
  y + z &= h_2bc \label{eq:Gt2} \\
  x + z' &= h_1ac' \label{eq:Gt3} \\
  y + z' &= h_2bc'. \label{eq:Gt4}
\end{align}
Multiply \eqref{eq:Gt1} by \eqref{eq:Gt4}, which equals \eqref{eq:Gt2} times \eqref{eq:Gt3}.
\begin{align}
  (x+z)(y+z') &= h_1h_2abcc' = (y+z)(x+z')  \nonumber \\
  xy + xz' + yz + zz' &= xy + yz' + xz + zz' \nonumber \\
  xz' + yz &= xz + yz' \nonumber \\
  (x - y)(z' - z) &= 0 \label{eq:Gt5}
\end{align}
If $x = y$, then \eqref{eq:Gt1} and \eqref{eq:Gt2} show
\begin{align*}
  h_1 a c &= x + z = y + z = h_2 b c \\
  ab^{-1} &= h_1^{-1}h_2 \in H
\end{align*}
which shows $\bar{a} = \bar{b}$.  But now $(\bar{a},x)$ and $(\bar{b},y)$ are the same vertex.
Thus \eqref{eq:Gt5} implies $z = z'$.  But now \eqref{eq:Gt1} and \eqref{eq:Gt3} show $c = c'$.

Thus for every choice of $h_1, h_2 \in H$ there is a unique $c,z$ such that $(\bar{c},z)$ is
adjacent to both $(\bar{a},x)$ and $(\bar{b},y)$ using $h_1,h_2$.  Note that not only is there a
unique $(\bar{c},z)$, but the choice of the representative $c$ for the coset $\bar{c}$ is unique.
There are now $t^2$ choices for $h_1$ and $h_2$ and each provides a unique $c,z$.  But each coset
$\bar{c}$ has $t$ elements so there are exactly $t^2/t = t$ common neighbors of $(\bar{a},x)$ and
$(\bar{b},y)$.
\end{proof} 

\subsection{The Spectrum of $G^+$ and $G^{\times}$} \label{sec:spectrum} 

\begin{lemma} \label{eigenvaluesGplus}
  The eigenvalues of $G^+$ are $q-1$, $\pm \sqrt{q}$, $\pm 1$, and $0$.  If $p$ is an
  odd prime, they have the following multiplicities: $q-1$ has multiplicity $1$, $\sqrt{q}$ and
  $-\sqrt{q}$ each have multiplicity $\frac{1}{2}(q/t-1)(q-2)$, $1$ and $-1$ both have multiplicity
  $\frac{1}{2}\left(q/t - 1\right)$, and $0$ has multiplicity $q-2$.
\end{lemma}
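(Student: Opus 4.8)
The plan is to compute the spectrum of $G^+$ by exploiting its structure as a Cayley-like graph over the abelian group $(\mathbb{F}_q/H) \times \mathbb{F}_q^*$, using characters to diagonalize the adjacency matrix. First I would set up the right algebraic framework: the vertex set is $V = (\mathbb{F}_q/H) \times \mathbb{F}_q^*$, and the adjacency condition ``$(\bar a, x) \sim (\bar b, y)$ iff $xy \in \overline{a+b}$'' is not quite a Cayley graph of the product group because the second coordinate interacts multiplicatively while the first interacts additively. The key observation is that $(\bar a, x)$ is adjacent to $(\bar d, y)$ exactly when $\bar d = \overline{xy - a}$ (this is Lemma~\ref{gisregular}), so the graph is a ``twisted'' product. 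I would look for eigenvectors of the form $f(\bar a, x) = \chi(a)\psi(x)$ where $\chi$ is an additive character of $\mathbb{F}_q$ that is trivial on $H$ (so that it descends to $\mathbb{F}_q/H$) and $\psi$ is a multiplicative character of $\mathbb{F}_q^*$. The characters $\chi$ trivial on $H$ are precisely those whose kernel contains $H$; there are $q/t$ of them, and they are in bijection with the characters of the quotient group $\mathbb{F}_q/H$.

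The main computation is then to apply the adjacency operator to such an $f$ and show it is an eigenvector. We have
\begin{align*}
  (Af)(\bar a, x) = \sum_{y \in \mathbb{F}_q^*} f\bigl(\overline{xy - a}, y\bigr)
  = \sum_{y \in \mathbb{F}_q^*} \chi(xy - a)\psi(y)
  = \overline{\chi(a)} \sum_{y \in \mathbb{F}_q^*} \chi(xy)\psi(y).
\end{align*}
When $\chi$ is the trivial character, the inner sum is $\sum_y \psi(y)$, which is $q-1$ if $\psi$ is trivial (giving the eigenvalue $q-1$ on the all-ones vector) and $0$ otherwise. When $\chi$ is nontrivial, substituting $u = xy$ (a bijection on $\mathbb{F}_q^*$ since $x \neq 0$) gives $\sum_{u} \chi(u)\psi(u/x) = \overline{\psi(x)}^{-1}\cdot\!$—more precisely $\psi(x)^{-1}\sum_u \chi(u)\psi(u)$, wait, $\psi(u/x) = \psi(u)\psi(x)^{-1}$, so the sum is $\psi(x)^{-1} g(\chi,\psi)$ where $g(\chi,\psi) = \sum_{u\in\mathbb{F}_q^*}\chi(u)\psi(u)$ is a Gauss sum. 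Thus $f$ would be an eigenvector only if $\psi(x)^{-1} = \overline{\psi(x)} = \psi(x)$ up to the constant, i.e. one must be slightly careful: the function $\chi(a)\psi(x)$ gets sent to $\overline{\chi(a)}\psi(x)^{-1} g(\chi,\psi)$. I would handle this by instead using eigenvector ansatz $f(\bar a, x) = \chi(a)\psi(x)$ and pairing $\chi \leftrightarrow \bar\chi$, or by noting $A$ is symmetric so real/complex conjugate pairs combine; the cleanest route is to observe that replacing $\chi$ by $\chi^{-1}$ and $\psi$ by $\psi^{-1}$ in the ansatz, the operator permutes these, and the relevant eigenvalues are $\pm|g(\chi,\psi)|$. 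By the standard Gauss sum bound, $|g(\chi,\psi)| = \sqrt q$ when both $\chi$ and $\psi$ are nontrivial, $|g(\chi,\psi)| = 1$ when exactly one is trivial (actually when $\chi$ nontrivial and $\psi$ trivial the sum is $\sum_u \chi(u) = -1$, and when $\chi$ trivial and $\psi$ nontrivial it is $0$), and I should track these cases carefully.

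The counting of multiplicities then comes from counting pairs $(\chi,\psi)$: there are $q/t$ choices for $\chi$ (characters trivial on $H$) and $q-1$ choices for $\psi$, giving $(q/t)(q-1) = |V|$ total, so these eigenvectors form a complete orthogonal basis. The trivial-trivial pair gives eigenvalue $q-1$ with multiplicity $1$. The pairs with $\chi$ trivial, $\psi$ nontrivial ($q-2$ of them) give eigenvalue $0$ — accounting for the multiplicity $q-2$ of the eigenvalue $0$. The pairs with $\chi$ nontrivial, $\psi$ trivial ($q/t - 1$ of them) give Gauss sum $-1$, hence eigenvalue $\pm 1$; here the sign splitting into $\frac12(q/t-1)$ each requires the parity argument with $p$ odd (pairing $\chi$ with $\chi^{-1}$; the self-paired characters, i.e. those with $\chi = \chi^{-1}$, need separate treatment, and there is exactly one such nontrivial $\chi$ when $p$ is odd only if... actually the number of order-$\le 2$ elements in a group of odd order $q/t$ is $1$, so there are no nontrivial self-inverse $\chi$, making the pairing perfectly $2$-to-$1$ and giving equal multiplicities $\frac12(q/t-1)$). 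The pairs with both nontrivial ($(q/t-1)(q-2)$ of them) give eigenvalue $\pm\sqrt q$; again pairing $(\chi,\psi) \leftrightarrow (\chi^{-1},\psi^{-1})$ to split the signs, with the claimed multiplicity $\frac12(q/t-1)(q-2)$ each. The main obstacle is the careful sign bookkeeping: showing that the $\pm\sqrt q$ and $\pm 1$ eigenspaces have exactly equal dimensions. This is where the hypothesis ``$p$ odd'' enters — it ensures $q/t$ is odd so that the involution $\chi \mapsto \chi^{-1}$ on nontrivial characters has no fixed points, and a parallel argument (or a direct trace computation: $\operatorname{tr}(A) = $ number of loops, $\operatorname{tr}(A^2) = $ number of edges counted with multiplicity) pins down the split. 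I would likely close the argument with a trace/eigenvalue-count consistency check rather than a delicate fixed-point analysis of each Gauss sum phase.
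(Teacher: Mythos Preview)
Your approach is essentially the same as the paper's: you diagonalize the adjacency operator using the vectors $f_{\chi,\psi}(\bar a,x)=\chi(a)\psi(x)$ for $\chi$ an additive character of $\mathbb{F}_q/H$ and $\psi$ a multiplicative character of $\mathbb{F}_q^*$, observe that $A$ sends $f_{\chi,\psi}$ to $g(\chi,\psi)\,f_{\bar\chi,\bar\psi}$ with $g$ a Gauss sum, and then read off the eigenvalues $q-1,\ 0,\ \pm 1,\ \pm\sqrt{q}$ and their multiplicities from the standard evaluation of $g$ in the four principal/non-principal cases. Your justification for the equal split of the $\pm 1$ multiplicities (that $q/t$ is a power of the odd prime $p$, so $\chi\mapsto\chi^{-1}$ has no nontrivial fixed points) is exactly the mechanism the paper uses, and your suggestion to pin down the $\pm\sqrt{q}$ split via a trace check is a reasonable way to close a step the paper handles only implicitly.
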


\begin{lemma} \label{eigenvaluesGtimes}
  The eigenvalues of $G^\times$ are $q-1$, $\pm \sqrt{q}$, $\pm 1$, and $0$.  If $p$ is an
  odd prime, they have the following multiplicities: $q-1$ has multiplicity $1$, $\sqrt{q}$ and
  $-\sqrt{q}$ each have multiplicity $\frac{1}{2}((q-1)/t-1)(q-1)$, $1$ and $-1$ both have multiplicity
  $\frac{1}{2}\left(q - 1\right)$, and $0$ has multiplicity $(q-1)/t - 1$.
\end{lemma}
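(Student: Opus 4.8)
The plan is to compute the spectrum of the adjacency matrix $A$ of $G^\times$ directly by exploiting the abelian group structure underlying the construction. Since $V(G^\times) = (\mathbb{F}_q^*/H) \times \mathbb{F}_q$ and adjacency of $(\bar a, x)$ and $(\bar b, y)$ depends only on whether $x+y \in \overline{ab}$, the graph is essentially a Cayley-type graph over the abelian group $(\mathbb{F}_q^*/H, \cdot) \times (\mathbb{F}_q, +)$, so its eigenvectors should be built from characters. First I would write a candidate eigenvector as a product $\chi \otimes \psi$, where $\chi$ is a (multiplicative) character of $\mathbb{F}_q^*/H$ — equivalently a character of $\mathbb{F}_q^*$ trivial on $H$ — and $\psi$ is an additive character of $\mathbb{F}_q$. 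Applying $A$ to such a vector and summing over the neighborhood description $x+y \in \overline{ab} = \{ h a b : h \in H\}$ should collapse, after a change of variables, into a product of a Gauss-type character sum over $\mathbb{F}_q$ and a factor coming from the quotient. The eigenvalue will then be expressible in terms of these sums, and the key number-theoretic input is the standard estimate $|g(\chi,\psi)| = \sqrt{q}$ for a nontrivial Gauss sum, together with the degenerate values when either character is trivial.

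The second step is bookkeeping: classify the pairs $(\chi, \psi)$. When both $\chi$ and $\psi$ are trivial we recover the all-ones eigenvector with eigenvalue $q-1$ (multiplicity $1$, consistent with connectedness and $(q-1)$-regularity from Lemma~\ref{gisregular}). When $\psi$ is trivial but $\chi$ is not, the relevant sum degenerates and should produce eigenvalue $\pm 1$ (the sign depending on whether $\chi$ is the quadratic character, which exists when $p$ is odd); there are $t-1$ nontrivial characters of $\mathbb{F}_q^*/H$, giving the combined multiplicity $q-1$ split evenly as $\tfrac12(q-1)$ for $+1$ and $\tfrac12(q-1)$ for $-1$ — here I would use that $q \equiv 1 \pmod t$ forces $t \mid q-1$, and for $p$ odd the quadratic character is trivial on $H$ iff $t$ is even, so the parity works out. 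When $\psi$ is nontrivial but $\chi$ is trivial, one gets a plain additive character sum that vanishes, contributing eigenvalue $0$; since there are $(q-1)/t - 1$ such configurations after accounting for multiplicity coming from the $H$-orbits, this gives $0$ with multiplicity $(q-1)/t - 1$. Finally, when both are nontrivial the eigenvalue is $\pm\sqrt q$ via the Gauss sum magnitude, with the sign governed by $\chi(-1)$ or a Jacobi-sum argument, and a counting of the remaining eigenvectors — total dimension $|V| = (q-1)q/t$ minus the accounted-for multiplicities — yields $\tfrac12((q-1)/t - 1)(q-1)$ for each of $\pm\sqrt q$. I would verify the arithmetic by checking that all multiplicities sum to $(q-1)q/t$ and that $\sum \lambda_i^2 = \mathrm{tr}(A^2)$ equals (number of edges, doubled, plus loops) $= n(q-1)$, which gives a clean independent sanity check.

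I expect the main obstacle to be two intertwined subtleties. First, $G^\times$ has loops (a vertex $(\bar a, x)$ is self-adjacent iff $2x \in \overline{a^2}$), so $A$ is not hollow; the character computation must carry the diagonal contribution correctly, and I would isolate it as a rank-one or small-rank perturbation or — cleaner — just let it fall out of the same Gauss sum, since the loop condition is the $y=x$ case of the general adjacency rule. Second, pinning down the \emph{signs} of the $\pm 1$ and $\pm\sqrt q$ eigenvalues, and hence the even split of multiplicities, requires knowing when the relevant characters interact with the quadratic character — this is exactly where the hypothesis "$p$ an odd prime" enters, and the cleanest route is to evaluate the degenerate sums explicitly rather than appeal to a black-box Gauss sum formula. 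An alternative that sidesteps the sign issue for the purposes of Theorem~\ref{alllowerthm} is to note that only $\lambda = \sqrt q$ (the second largest \emph{in absolute value}) is needed downstream, so I would first prove the weaker statement "$|\lambda_i| \in \{q-1, \sqrt q, 1, 0\}$ with the top one simple," which suffices for Theorem~\ref{primeconstr}, and then upgrade to the exact multiplicities by the trace identities above. The same argument applies verbatim to $G^+$ (Lemma~\ref{eigenvaluesGplus}) with the roles of the additive and multiplicative structures swapped, $H$ now an additive subgroup, and the quotient $\mathbb{F}_q/H$ of size $q/t$.
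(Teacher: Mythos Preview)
Your overall plan---take tensor products $\chi\otimes\psi$ of a multiplicative character $\chi$ of $\mathbb{F}_q^*/H$ and an additive character $\psi$ of $\mathbb{F}_q$, and reduce to Gauss sums---is exactly the paper's approach. But your case analysis of the degenerate sums is reversed, and the character counts are wrong. Set $\Gamma_{\chi,\psi}=\sum_{z\in\mathbb{F}_q^*}\chi(\bar z)\psi(z)$. When $\chi$ is trivial and $\psi$ is not, $\Gamma=\sum_{z\in\mathbb{F}_q^*}\psi(z)=-\psi(0)=-1$, so \emph{this} is the case producing eigenvalues $\pm 1$; there are $q-1$ nontrivial additive characters $\psi$, giving total multiplicity $q-1$. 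When $\psi$ is trivial and $\chi$ is not, $\Gamma=\sum_{z\in\mathbb{F}_q^*}\chi(\bar z)=t\sum_{\bar z\in\mathbb{F}_q^*/H}\chi(\bar z)=0$, so this is the eigenvalue-$0$ case; the quotient $\mathbb{F}_q^*/H$ has order $(q-1)/t$, hence $(q-1)/t-1$ nontrivial characters $\chi$ (not $t-1$). You have these two cases and their counts swapped, which is why your ``$t-1$ characters giving combined multiplicity $q-1$'' line is internally inconsistent.

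A second point you gloss over: $G^\times$ is a Cayley \emph{sum} graph (adjacency depends on the product $\bar a\bar b$ and the sum $x+y$, not on a ratio or difference), so the action of the adjacency matrix is $A\langle\chi,\psi\rangle=\Gamma_{\chi,\psi}\langle\bar\chi,\bar\psi\rangle$, landing on the \emph{conjugate} character vector. Thus $\langle\chi,\psi\rangle$ is only an eigenvector of $A^2$, with eigenvalue $|\Gamma_{\chi,\psi}|^2$, and the eigenvalues of $A$ are $\pm|\Gamma_{\chi,\psi}|$. The even split between the signs then comes from pairing $(\chi,\psi)$ with its conjugate: $\langle\chi,\psi\rangle\pm\langle\bar\chi,\bar\psi\rangle$ are genuine eigenvectors of $A$, and for $p$ odd no nontrivial additive character is self-conjugate, so the pairs are distinct. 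This conjugate-pairing---not any quadratic-character argument---is what produces the equal split of the $\pm 1$ and $\pm\sqrt q$ multiplicities in the paper's proof.
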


The proof of these lemmas are similar to proofs by Alon and R\"{o}dl~\cite[Lemma 3.6]{mr-alon05} and
Szab\'o~\cite{ee-szabo03}.  In addition, the two proofs given below are almost the same but there
are several subtle issues involving the fact that $G^+$ and $G^\times$ switch between $\mathbb{F}_q$
and $\mathbb{F}^*_q$. There are small but crucial differences in how the proofs below handle the
zero element.  Therefore, we give both proofs and caution the reader to pay attention to how the
zero element is handled when reading the proofs.

\begin{proof}[Proof of Lemma~\ref{eigenvaluesGplus}] 
Let $M$ be the adjacency matrix of $G^+$.  Let $\chi$ be an arbitrary additive character of $\mathbb{F}_q/H$
and let $\phi$ be an arbitrary multiplicative character of $\mathbb{F}^*_q$.  This means that
\begin{align*}
  \chi : \mathbb{F}_q/H \rightarrow \mathbb{C} \quad \quad
  \phi : \mathbb{F}^*_q \rightarrow \mathbb{C}
\end{align*}
where $\chi$ is an additive group homomorphism (if $\bar{a}, \bar{b}$ are cosets in $\mathbb{F}_q/H$
then $\chi(\bar{a}+\bar{b}) = \chi(\bar{a})\chi(\bar{b})$, $\chi(\bar{0}) = 1$, and $\chi(-\bar{a})
= \chi(\bar{a})^{-1}$) and $\phi$ is a multiplicative group homomorphism (if $a,b \in
\mathbb{F}^*/q$ then $\phi(ab) = \phi(a)\phi(b)$, $\phi(1) = 1$, $\phi(a^{-1}) = \phi(a)^{-1}$.)
Note that since $\phi(1) = 1$ and $x^q = 1$ for any $x \in \mathbb{F}^*_q$, $\phi(x)$ must be a root
of unity in $\mathbb{C}$.  Thus $\phi(x^{-1}) = \phi(x)^{-1} = \overline{\phi(x)}$ where
$\overline{\phi(x)}$ is the complex conjugate of $\phi(x)$.  Similarly, $\chi(-\bar{a}) =
\overline{\chi(\bar{a})}$, the complex conjugate of $\chi$ applied to the coset $\bar{a}$.

Let $\left<\chi,\phi\right>$ denote the column vector whose coordinates are labeled by the elements
of $V(G^+)$ and whose entry
at the coordinate $(\bar{a},x)$ is $\chi(\bar{a})\phi(x)$.
We now show that $\left<\chi,\phi\right>$ is an eigenvector of $M$ and compute its eigenvalue.  The following
expression is the entry of the vector $M\left<\chi,\phi\right>$ at the coordinate $(\bar{a},x)$.
$$\sum_{\substack{ (\bar{b},y) \text{ is a vertex } \\ (\bar{a},x) \leftrightarrow (\bar{b},y)}} \chi(\bar{b})\phi(y) =
\sum_{\substack{ \bar{b} \in \mathbb{F}_q/H \\ y \in \mathbb{F}^*_q \\ xy \in \overline{a+b}} } \chi(\bar{b})\phi(y)$$

First, we make two changes of variables in this sum. The first change is to switch $\bar{b}$ to $\bar{c}$
by the transformation $\bar{c} = \overline{a + b} = \bar{a} + \bar{b}$.
$$\sum_{\substack{ \bar{c} \in \mathbb{F}_q/H \\ y \in \mathbb{F}^*_q \\ xy \in \bar{c}} } \chi(\overline{c-a})\phi(y)$$
Next, switch $y$ to $z$ by the transformation $z=xy$.
$$\sum_{\substack{ \bar{c} \in \mathbb{F}_q/H \\ z \in \mathbb{F}^*_q \\ z \in \bar{c}} } \chi(\overline{c-a})\phi\left(\frac{z}{x}\right).$$
Using that $\chi$ and $\phi$ are characters (homomorphisms), this transforms to
\begin{align*}
  \left(\chi(\bar{a}) \phi(x) \right)^{-1}
  \sum_{\substack{ \bar{c} \in \mathbb{F}_q/H \\ z \in \mathbb{F}^*_q \\ z \in \bar{c}} } \chi(\bar{c})\phi(z)
  = \overline{\chi(\bar{a})} \overline{\phi(x)}
  \sum_{\left\{ (\bar{c},z) : \bar{c} \in \mathbb{F}_q/H, z \in \mathbb{F}^*_q, z \in \bar{c} \right\} }
  \chi(\bar{c}) \phi(z)
\end{align*}
There is an obvious bijection between the set
$\left\{ (\bar{c},z) : \bar{c} \in \mathbb{F}_q/H, z \in \mathbb{F}^*_q, z \in \bar{c} \right\}$ and the
set $\left\{ z : z \in \mathbb{F}^*_q \right\}$, since once $z$ is picked, there is a unique coset containing $z$.
Thus the above sum can be simplified to
$$ \overline{\chi(\bar{a})} \overline{\phi(x)} \sum_{z \in \mathbb{F}^*_q} \chi(\bar{z})\phi(z).$$

Define
$\Gamma_{\chi,\phi} = \sum_{z \in \mathbb{F}^*_q} \chi(\bar{z})\phi(z)$
so that $\Gamma_{\chi,\phi}$ is some constant depending only on $\chi$ and $\phi$.
Then the vector $M\left<\chi,\phi\right>$ is $\Gamma_{\chi,\phi} \left<\overline{\chi},\overline{\phi}\right>$.
Thus $M^2 \left<\chi,\phi\right> = \Gamma_{\chi,\phi}\overline{\Gamma_{\chi,\phi}} \left<\chi,\phi\right>$,
so $\Gamma_{\chi,\phi}\overline{\Gamma_{\chi,\phi}}$ is an eigenvalue of $M^2$.

\begin{lemma} \label{lem:charfacts}
  Let $A$ be a finite group.  There are $|A|$ characters of $A$ and if $\tau : A \rightarrow \mathbb{C}$ is
  a non-principal character then $\sum_{a \in A} \tau(a) = 0$.
\end{lemma}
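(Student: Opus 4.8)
The plan is to treat the two assertions of the lemma independently. Throughout, ``character'' means a group homomorphism $\tau\colon A \to \mathbb{C}^*$, so only the abelian structure of $A$ is relevant; in every place the lemma is applied, $A$ is one of $\mathbb{F}_q/H$ or $\mathbb{F}^*_q$ (or a quotient of one of these), all of which are abelian, and I would say this explicitly at the outset and assume $A$ abelian.

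For the count of characters, I would appeal to the structure theorem for finite abelian groups to write $A \cong \mathbb{Z}/n_1\mathbb{Z} \times \cdots \times \mathbb{Z}/n_r\mathbb{Z}$ with $n_1 \cdots n_r = |A|$, and fix generators $g_1, \dots, g_r$ of the cyclic factors. A homomorphism $\tau$ into $\mathbb{C}^*$ is completely determined by the tuple $(\tau(g_1), \dots, \tau(g_r))$; since $g_i$ has order $n_i$, each $\tau(g_i)$ must be an $n_i$-th root of unity, and conversely every choice of such roots of unity defines a homomorphism, because the only relations among the $g_i$ are $g_i^{n_i} = 1$. Hence the number of characters is $\prod_i n_i = |A|$. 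If one prefers to avoid the structure theorem, the identity $\widehat{A \times B} \cong \widehat A \times \widehat B$ together with $|\widehat{\mathbb{Z}/n\mathbb{Z}}| = n$ (the $n$-th roots of unity) gives the same conclusion by induction on the number of cyclic factors.

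For the vanishing of the sum, suppose $\tau$ is non-principal and pick $b \in A$ with $\tau(b) \neq 1$. Writing $S = \sum_{a \in A} \tau(a)$ and using that $a \mapsto ba$ permutes $A$ together with the multiplicativity of $\tau$, I get
\[ \tau(b)\, S = \sum_{a \in A} \tau(b)\tau(a) = \sum_{a \in A} \tau(ba) = \sum_{a' \in A} \tau(a') = S, \]
so $(\tau(b) - 1)S = 0$ and therefore $S = 0$. There is no real obstacle in either half; the only subtlety worth flagging is the convention that characters here are one-dimensional, so that $|\widehat A| = |A|$ is precisely the abelian statement, which is why I would fix the abelian hypothesis before starting the argument.
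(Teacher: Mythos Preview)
Your proof is correct and is the standard argument. Note that the paper does not actually supply a proof of this lemma at all: it is quoted inside the proof of Lemma~\ref{eigenvaluesGplus} as a well-known fact about characters of finite abelian groups and then used without further justification. Your write-up therefore fills in a step the authors deliberately left as background, and both halves---the count via the structure theorem (or the multiplicativity $\widehat{A\times B}\cong\widehat A\times\widehat B$) and the translation trick $(\tau(b)-1)S=0$---are exactly the textbook arguments one would cite. Your remark that the relevant groups $\mathbb{F}_q/H$ and $\mathbb{F}_q^*$ are abelian, so that one-dimensional characters suffice and $|\widehat A|=|A|$, is the right clarification to make explicit.
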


The above lemma shows there are $|\mathbb{F}_q/H| \cdot |\mathbb{F}^*_q| = q(q-1)/t = |V(G^+)|$
vectors $\left<\chi,\phi\right>$.  Secondly, the lemma shows $\left<\chi,\phi\right>$ is orthogonal
to $\left<\chi',\phi'\right>$ if $\chi \neq \chi'$ or $\phi \neq \phi'$  (the dot
product of $\left<\chi,\phi\right>$ with $\left<\chi',\phi'\right>$ is a sum which can be rearranged
to apply Lemma~\ref{lem:charfacts}.)

Since $\left\{ \left<\chi,\phi\right> : \chi,\phi \text{ characters } \right\}$ is a linearly
independent set of $|V(G^+)|$ eigenvectors of $M^2$ and $M^2$ has $|V(G^+)|$ columns,  all eigenvalues of
$M^2$ are of the form $\Gamma_{\chi,\phi}\overline{\Gamma_{\chi,\phi}}$.  The eigenvalues of $M^2$ are the squares of the
eigenvalues of $M$.  Since $M$ is symmetric, these eigenvalues are real so all eigenvalues of $M$
are of the form $\pm \left| \Gamma_{\chi,\phi} \right|$.

When $\chi$ and $\phi$ are principal characters of their respective groups (this means $\chi$ and $\phi$ map everything to $1$), the corresponding
eigenvalue is $q-1$ since there are $q-1$ terms in the sum defining $\Gamma_{\chi,\phi}$.  This eigenvalue has multiplicity one.
When $\chi$ is principal but $\phi$ is not principal, the eigenvalues are
$$\Gamma_{\chi,\phi} = \sum_{z \in \mathbb{F}^*_q} \phi(z) = 0.$$
There are $q-1$ possible characters $\phi$, but one of them is principal so $0$ will have multiplicity $q-2$
as an eigenvalue.
When $\phi$ is principal but $\chi$ is not, we obtain
$$\Gamma_{\chi,\phi} = \sum_{z \in \mathbb{F}^*_q} \chi(\bar{z}) = t \sum_{\bar{z} \in \mathbb{F}_q/H} \chi(\bar{z})  - \chi(\bar{0})
= -\chi(\bar{0}) = -1.$$
($\chi(\bar{0})$ is subtracted since the sum over $\mathbb{F}^*_q$ will have $t = \left| H \right|$ terms for each
coset, except the zero coset will only appear $t-1$ times.)  Thus the eignevalues when $\phi$ is principal
and $\chi$ is not are $\pm 1$.  For the multiplicities, there are $q/t - 1$ non-principal characters $\chi$.
They come in pairs, since if $\chi$ is a character, the complex conjugate 
$\overline{\chi}$ is a character as well.  Also, note that $\left<\chi,\phi\right> + \left<\bar{\chi},\phi\right>$ has eigenvalue $1$
and $\left<\chi,\phi\right> - \left<\bar{\chi},\phi\right>$ has eigenvalue $-1$ (when $\phi$
is principal.) Thus if $p$ is an odd prime, $1$ and $-1$ will
each have multiplicity $\frac{1}{2} (q/t - 1)$.  

When neither $\chi$ nor $\phi$ is a principal character, we apply a theorem on Gaussian sums of characters.

\begin{thm} \label{gausssums}
  If $\chi'$ and $\phi$ are additive and multiplicative non-principal characters of $\mathbb{F}_q$ and
  $\mathbb{F}^*_q$ respectively, then
  $\left| \sum_{x \in \mathbb{F}^*_q} \chi'(x) \phi(x) \right| = \sqrt{q}$.
\end{thm}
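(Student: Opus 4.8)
The plan is to prove this by the classical second-moment computation for Gauss sums: set $g = \sum_{x \in \mathbb{F}_q^*} \chi'(x)\phi(x)$ and show directly that $g\overline{g} = q$. First I would use that both $\chi'$ and $\phi$ take values that are roots of unity, so $\overline{\chi'(y)} = \chi'(-y)$ and $\overline{\phi(y)} = \phi(y^{-1})$. Multiplying out $g\overline{g}$ and combining the characters gives
\[
  |g|^2 = \sum_{x,y \in \mathbb{F}_q^*} \chi'(x-y)\,\phi(xy^{-1}).
\]

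The key step is the change of variables $x = yz$ with $z \in \mathbb{F}_q^*$, which is a bijection of $\mathbb{F}_q^*$ for each fixed $y$ (this is exactly where $x,y \ne 0$ is used). Under it $x-y$ becomes $y(z-1)$ and $xy^{-1}$ becomes $z$, so
\[
  |g|^2 = \sum_{z \in \mathbb{F}_q^*} \phi(z) \sum_{y \in \mathbb{F}_q^*} \chi'\bigl(y(z-1)\bigr).
\]
Now I would evaluate the inner sum by cases. When $z = 1$ it is $\sum_{y \in \mathbb{F}_q^*} \chi'(0) = q-1$. When $z \ne 1$, the map $y \mapsto y(z-1)$ permutes $\mathbb{F}_q^*$, so the inner sum equals $\sum_{w \in \mathbb{F}_q^*} \chi'(w)$; applying Lemma~\ref{lem:charfacts} to the additive group $\mathbb{F}_q$ (here is where non-principality of $\chi'$ enters) gives $\sum_{w \in \mathbb{F}_q} \chi'(w) = 0$, hence $\sum_{w \in \mathbb{F}_q^*} \chi'(w) = -\chi'(0) = -1$.

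Substituting back, $|g|^2 = (q-1)\phi(1) - \sum_{z \in \mathbb{F}_q^*,\, z \ne 1} \phi(z)$. Finally I would apply Lemma~\ref{lem:charfacts} once more, this time to the multiplicative group $\mathbb{F}_q^*$ (using non-principality of $\phi$), to get $\sum_{z \in \mathbb{F}_q^*} \phi(z) = 0$, so $\sum_{z \ne 1} \phi(z) = -\phi(1) = -1$; therefore $|g|^2 = (q-1) + 1 = q$ and $|g| = \sqrt{q}$. There is no real obstacle in this argument — it is the standard Gauss-sum identity — but the two points that deserve care are making sure \emph{both} non-principality hypotheses get used (one for $\chi'$, one for $\phi$, at two different stages) and checking that the substitution $x = yz$ genuinely stays within $\mathbb{F}_q^*$. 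If one preferred, the statement could also simply be cited from any standard reference on finite fields, but the self-contained computation above is short enough to record in full.
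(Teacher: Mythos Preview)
Your argument is correct; it is the standard second-moment computation for Gauss sums, and both non-principality hypotheses are invoked exactly where they must be. Note, however, that the paper does not give its own proof of this theorem at all: Theorem~\ref{gausssums} is simply quoted as a known result on Gaussian sums and then applied, which is precisely the option you mention at the end of your proposal.
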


While we can't apply this theorem directly since $\chi$ is not a character on $\mathbb{F}_q$,
define a new additive character $\chi'$ on $\mathbb{F}_q$ as follows:
for $x \in \mathbb{F}_q$ let $\chi'(x) = \chi(\bar{x})$.  This is an additive character because
$\chi'(0) = \chi(\bar{0}) = 1$, $\chi'(x+y) = \chi(\overline{x+y}) = \chi(\bar{x} + \bar{y}) = \chi(\bar{x})\chi(\bar{y}) = \chi'(x)\chi'(y)$,
and $\chi'(-x) = \chi(\overline{-x}) = \chi(\bar{x})^{-1} = \chi'(x)^{-1}$.  We can now rewrite $\Gamma_{\chi,\phi}$ as
$$\Gamma_{\chi,\phi} = \sum_{z \in \mathbb{F}^*_q} \chi'(x) \phi(x).$$
Theorem~\ref{gausssums} shows that when $\chi$ and $\phi$ are both non-principal, the corresponding
eigenvalue is $\pm \sqrt{q}$.
\end{proof} 

\begin{proof}[Proof of Lemma~\ref{eigenvaluesGtimes}] 
Let $M$ be the adjacency matrix of $G^\times$.  Let $\chi$ be an arbitrary multiplicative character
of $\mathbb{F}^*_q/H$ and let $\phi$ be an arbitrary additive character of $\mathbb{F}_q$.

Let $\left<\chi,\phi\right>$ denote the column vector whose coordinates are labeled by the elements
of $V(G^{\times})$ and whose entry at the coordinate $(\bar{a},x)$ is $\chi(\bar{a})\phi(x)$.  We now show
that $\left<\chi,\phi\right>$ is an eigenvector of $M$ and compute its eigenvalue.  The following
expression is the entry of the vector $M\left<\chi,\phi\right>$ at the coordinate $(\bar{a},x)$.
$$\sum_{\substack{ (\bar{b},y) \text{ is a vertex } \\ (\bar{a},x) \leftrightarrow (\bar{b},y)}}
\chi(\bar{b})\phi(y) = \sum_{\substack{ \bar{b} \in \mathbb{F}^*_q/H \\ y \in \mathbb{F}_q \\ x+y
\in \overline{ab}} } \chi(\bar{b})\phi(y)$$

First, we make two changes of variables in this sum. The first change is to switch $\bar{b}$ to $\bar{c}$
by the transformation $\bar{c} = \overline{ab} = \bar{a} \cdot \bar{b}$.
$$\sum_{\substack{ \bar{c} \in \mathbb{F}^*_q/H \\ y \in \mathbb{F}_q \\ x+y \in \bar{c}} }
\chi(\overline{ca^{-1}})\phi(y)$$
Next, switch $y$ to $z$ by the transformation $z=x+y$.
$$\sum_{\substack{ \bar{c} \in \mathbb{F}^*_q/H \\ z \in \mathbb{F}_q \\ z \in \bar{c}} }
\chi(\overline{ca^{-1}})\phi\left(z - x\right).$$
Using that $\chi$ and $\phi$ are characters (homomorphisms), this transforms to
\begin{align*}
  \left(\chi(\bar{a}) \phi(x) \right)^{-1}
  \sum_{\substack{ \bar{c} \in \mathbb{F}^*_q/H \\ z \in \mathbb{F}_q \\ z \in \bar{c}} } \chi(\bar{c})\phi(z)
  = \overline{\chi(\bar{a})} \overline{\phi(x)}
  \sum_{\left\{ (\bar{c},z) : \bar{c} \in \mathbb{F}^*_q/H, z \in \mathbb{F}_q, z \in \bar{c} \right\} }
  \chi(\bar{c}) \phi(z)
\end{align*}
There is an obvious bijection between the set $\left\{ (\bar{c},z) : \bar{c} \in \mathbb{F}^*_q/H, z
\in \mathbb{F}_q, z \in \bar{c} \right\}$ and the set $\left\{ z : z \in \mathbb{F}^*_q \right\}$,
since once a non-zero $z$ is picked, there is a unique coset containing $z$.  (When $z = 0$, there is
no coset containing $z$.) Thus the above sum can
be simplified to $$ \overline{\chi(\bar{a})} \overline{\phi(x)} \sum_{z \in \mathbb{F}^*_q}
\chi(\bar{z})\phi(z).$$

Define $\Gamma_{\chi,\phi} = \sum_{z \in \mathbb{F}^*_q} \chi(\bar{z})\phi(z)$ so that
$\Gamma_{\chi,\phi}$ is some constant depending only on $\chi$ and $\phi$.  Then the vector
$M\left<\chi,\phi\right>$ is $\Gamma_{\chi,\phi} \left<\overline{\chi},\overline{\phi}\right>$.
Thus $M^2 \left<\chi,\phi\right> = \Gamma_{\chi,\phi}\overline{\Gamma_{\chi,\phi}} \left<\chi,\phi\right>$ so
$\Gamma_{\chi,\phi}\overline{\Gamma_{\chi,\phi}}$ is an eigenvalue of $M^2$.  Like the last proof, Lemma~\ref{lem:charfacts}
shows all eigenvalues of $M^2$ are of the form $\Gamma_{\chi,\phi}\overline{\Gamma_{\chi,\phi}}$ so
all eigenvalues of $M$ are of the form $\pm \left| \Gamma_{\chi,\phi} \right|$.

When $\chi$ and $\phi$ are principal characters of their respective groups, the corresponding
eigenvalue is $q-1$ since there are $q-1$ terms in the sum.  This eigenvalue has multiplicity one.
When $\phi$ is principal but $\chi$ is not principal, the eigenvalues are
$$\Gamma_{\chi,\phi} = \sum_{z \in \mathbb{F}^*_q} \chi(\bar{z}) = t \sum_{\bar{z} \in \mathbb{F}^*_q/H}
\chi(\bar{z}) = 0.$$
There are $(q-1)/t$ possible characters $\chi$, but one of them is principal so $0$ will have
multiplicity $(q-1)/t - 1$
as an eigenvalue.
When $\chi$ is principal but $\phi$ is not, we obtain
$$\Gamma_{\chi,\phi} = \sum_{z \in \mathbb{F}^*_q} \phi(z) 
= \sum_{z \in \mathbb{F}_q} \phi(z)  - \phi(0)
= -\phi(0) = -1.$$
Thus the eignevalues when $\chi$ is principal and $\phi$ is not are $\pm 1$.  
For the multiplicities, there are $q - 1$ non-principal characters $\phi$.
They come in pairs, since if $\phi$ is a character, the complex conjugate 
$\overline{\phi}$ is a character as well.  Also, note that $\left<\chi,\phi\right> +
\left<\chi,\bar{\phi}\right>$ has eigenvalue $1$
and $\left<\chi,\phi\right> - \left<\chi,\bar{\phi}\right>$ has eigenvalue $-1$ (when $\chi$
is principal.) Thus if $p$ is an odd prime, $1$ and $-1$ will
each have multiplicity $\frac{1}{2} (q - 1)$.

When neither $\chi$ or $\phi$ is a principal character, we apply Theorem~\ref{gausssums}.  While we
can't apply this theorem directly since $\chi$ is not a multiplicative character on
$\mathbb{F}^*_q$, define a new multiplicatve character $\chi'$ on $\mathbb{F}^*_q$ as follows: for
$x \in \mathbb{F}^*_q$ let $\chi'(x) = \chi(\bar{x})$.  This is a multiplicative character because
$\chi'(1) = \chi(\bar{1}) = 1$, $\chi'(xy) = \chi(\overline{xy}) = \chi(\bar{x} \cdot \bar{y}) =
\chi(\bar{x})\chi(\bar{y}) = \chi'(x)\chi'(y)$, and $\chi'(x^{-1}) = \chi(\overline{x^{-1}}) =
\chi(\bar{x})^{-1} = \chi'(x)^{-1}$.  We can now rewrite $\Gamma_{\chi,\phi}$ as
$$\Gamma_{\chi,\phi} = \sum_{z \in \mathbb{F}^*_q} \chi'(x) \phi(x).$$ Theorem~\ref{gausssums} shows
that when $\chi$ and $\phi$ are both non-principal, the corresponding eigenvalue is $\pm \sqrt{q}$.
\end{proof} 

\subsection{Independence number} 

In Table~\ref{ourtable}, there is no lower bound in the upper right cell; that is, when $m$ is much
larger than $t$ the only lower bound we know is the bound of $c_t m^{2-1/t}$ from the random graph.
What about using $G^+$ or $G^{\times}$ as the first color in a construction for the lower bound?  In
other words, what is the independence number of $G^+$ and $G^{\times}$?  This is related to the
conjecture that Paley Graphs are Ramsey Graphs (see \cite{pal-maistrelli06} and its references.)
While we aren't able to determine exactly the independence number, computation suggests that $G^+$
and $G^{\times}$ have independent sets of size roughly $\sqrt{n}$, where $n$ is the number of
vertices.  In particular, computation suggests the following conjecture for $G^+$.

\begin{conj}
  Let $G^+(q,t)$ be the graph constructed at the beginning of this section for the parameters $q$
  and $t$.  Recall that $G^+(q,t)$ has $q(q-1)/t$ vertices which is regular of degree $q-1$ so
  $G^+(2^a,2^{a-1})$ is an $n$-vertex graph where every degree is about $n/2$ and any pair of
  vertices have about $n/4$ common neighbors.
  For $a \geq 6$,
  \begin{align*}
    \alpha(G^+(2^a,2^{a-1})) &=
    \begin{cases}
      2^{a/2}  & \text{if $a$ is even} \\
      2^{(a-1)/2} + 1 & \text{if $a$ is odd}
    \end{cases} \\
    \alpha(G^+(p^2,p)) &= p^2-1 \quad \quad \text{ if $p$ is odd }
  \end{align*}
\end{conj}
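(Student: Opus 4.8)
The plan is to convert the independence-number question into a question about subsets of a finite field that avoid the values of a symmetric bilinear form, and then to attack that with the classification of quadratic and bilinear forms over small fields.

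Consider first $G^+(2^a,2^{a-1})$. Here $H$ is an additive hyperplane of $\mathbb{F}_q$ (with $q=2^a$), so choosing a nonzero linear functional $\ell\colon\mathbb{F}_q\to\mathbb{F}_2$ with $\ker\ell=H$ identifies $\mathbb{F}_q/H$ with $\mathbb{F}_2$, and a vertex becomes a pair $(\epsilon,x)\in\mathbb{F}_2\times\mathbb{F}_q^*$. In these coordinates $(\epsilon_1,x)$ is adjacent to $(\epsilon_2,y)$ exactly when $\ell(xy)=\epsilon_1+\epsilon_2$, and $(\epsilon,x)$ carries a loop exactly when $\ell(x^2)=0$. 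Set $B(u,v)=\ell(uv)$; this is a nondegenerate symmetric $\mathbb{F}_2$-bilinear form (nondegenerate since $\ell(u\mathbb{F}_q)=0$ forces $u=0$), and because the characteristic is $2$ the diagonal $Q(u):=B(u,u)=\ell(u^2)$ is $\mathbb{F}_2$-linear; it is nonzero, so $\ker Q$ is a hyperplane. For an independent set $S$ write $S_\epsilon=\{x:(\epsilon,x)\in S\}$: the loop condition gives $Q\equiv 1$ on $S_0\cup S_1$ (which in particular forces $S_0\cap S_1=\emptyset$), adjacency gives $B(x,y)=1$ whenever $x,y$ lie in a common $S_\epsilon$, and $B(x,y)=0$ whenever $x\in S_0$ and $y\in S_1$.

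When $a$ is even I expect this to pin down $\alpha$ exactly. For the upper bound, fix $v_0\in S_0$ (relabelling the two classes if needed) and let $D=\operatorname{span}_{\mathbb{F}_2}\{u-v_0:u\in S_0\cup S_1\}$, so $S_0\cup S_1\subseteq v_0+D$. Expanding $B(u-v_0,w-v_0)$ bilinearly and substituting the relations above (using $Q(v_0)=1$) shows $B$ vanishes identically on $D$; in particular $D\subseteq\ker Q$, and $B$ restricted to $\ker Q$ is alternating. Since $\dim\ker Q=a-1$ is odd, that restriction is degenerate with a one-dimensional radical, so its maximal totally isotropic subspaces have dimension $a/2$; hence $\dim D\le a/2$ and $|S|=|S_0\cup S_1|\le 2^{a/2}$. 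For the matching lower bound, take a maximal totally isotropic $D\subseteq\ker Q$, choose any $v_0$ with $Q(v_0)=1$, and set $S=\{(\ell(v_0d),\,v_0+d):d\in D\}$; one checks directly that $Q(v_0+d)=1$ (no loops), that $v_0+d\neq 0$ since $v_0\notin D$, and that $B(v_0+d,v_0+d')=1+\ell(v_0d)+\ell(v_0d')$, which differs from $\ell(v_0d)+\ell(v_0d')$, so $S$ is independent of size $2^{a/2}$. This beats what the second eigenvalue $\sqrt q$ of Theorem~\ref{primeconstr} gives through the expander-mixing/Hoffman bound, which here is only $O(2^{a/2})$ with a constant strictly larger than $1$; that gap is exactly the algebraic content of the statement. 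When $a$ is odd the same setup applies, but $\dim\ker Q=a-1$ is even and $B|_{\ker Q}$ can be nondegenerate, in which case the coset bound reads only $|S|\le 2^{(a-1)/2}$, one short of the conjectured $2^{(a-1)/2}+1$; so the odd case needs a finer argument controlling how much of the coset $v_0+D$ can actually be realized when $D$ attains its largest dimension, reflecting that the extremal configuration should be a subspace coset together with one extra isolated vertex.

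The case $G^+(p^2,p)$ with $p$ odd is where I expect the real obstacle. Now $H$ is a line in $\mathbb{F}_{p^2}\cong\mathbb{F}_p^2$; using the trace one identifies $\mathbb{F}_{p^2}/H$ with $\mathbb{F}_p$, so adjacency becomes $\operatorname{Tr}(xy)=\alpha_1+\alpha_2$, the loop condition becomes $\operatorname{Tr}(x^2)=2\alpha$, and the relevant object is the nondegenerate symmetric $\mathbb{F}_p$-bilinear form $(x,y)\mapsto\operatorname{Tr}(xy)$. The eigenvalue bound of Theorem~\ref{primeconstr} (equivalently the expander-mixing/Hoffman bound with $\lambda=\sqrt q$) gives only $\alpha=O(q)$, roughly twice the conjectured $q-1$, so unlike the even-characteristic case there is essentially no slack: one must remove the constant factor and then shave by exactly one. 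The trouble is that over $\mathbb{F}_p$ with $p$ odd the diagonal $B(u,u)$ is a genuine quadratic form, so the loop conditions carve out conics rather than affine subspaces, the extremal independent set is not a coset of an isotropic subspace, and there is no purely linear-algebraic dimension count to fall back on. Identifying the precise extremal configuration — together with certifying that the spectral bound is not tight by the tiny margin required — is in effect the exact-independence-number problem for Paley-type pseudorandom graphs, which is the circle of ideas the remark preceding the conjecture alludes to; I would expect this last case to require an ad hoc algebraic construction for the lower bound and a delicate eigenvalue-plus-local-structure argument for the matching upper bound.
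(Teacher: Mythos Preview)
The statement you are addressing is a \emph{conjecture}; the paper does not prove it. What the paper does offer is (i) computer verification of the $2^a$ formula for $a\in\{6,7,8,9,10\}$, together with the remark that $a=3,4$ give the anomalous values $4$ and $5$, and (ii) a lower bound $\alpha(G^+(p^2,p))\ge\lfloor p^2/2\rfloor$ for odd $p$, proved by exhibiting the explicit set $\{(\bar 0,x^{2k}):0\le k<q/2\}$ with $x$ a primitive element. There is therefore no ``paper's own proof'' to compare against, and you correctly identify that the odd-$a$ and odd-$p$ cases remain genuinely open.

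Your treatment of the even-$a$ case is attractive and, under the convention that looped vertices are excluded from independent sets, the totally-isotropic-subspace argument is sound: the key point that $B|_{\ker Q}$ is alternating on a space of odd dimension $a-1$, hence has a one-dimensional radical, is exactly what forces $\dim D\le a/2$. This is not in the paper and would be a real partial result. However, there is a genuine gap regarding conventions. Your upper bound rests entirely on the constraint $Q\equiv 1$ on $S_0\cup S_1$, which you obtain from forbidding loops. But your argument is insensitive to the hypothesis $a\ge 6$ and would yield $\alpha(G^+(2^4,2^3))\le 2^{4/2}=4$, whereas the paper reports $\alpha(G^+(2^4,2^3))=5$. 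The natural explanation is that the paper (and hence the conjecture) uses the Ramsey-relevant notion of independence---no edge between \emph{distinct} vertices---so looped vertices are permitted. Under that convention your lower-bound construction still works (it happens to avoid loops), but your upper bound does not: without $Q\equiv 1$ the translation $u\mapsto u-v_0$ no longer lands $S$ inside a coset of a totally isotropic subspace of $\ker Q$. So for even $a$ you have established $\alpha\ge 2^{a/2}$ and that $2^{a/2}$ is the exact value of the loop-free independence number, but the conjectured upper bound on $\alpha$ itself still requires an additional argument explaining why, for $a\ge 6$, adjoining looped vertices cannot enlarge an optimal loop-free independent set.
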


Note that $\alpha(G^+(2^3,2^2)) = 4$ and $\alpha(G^+(2^4,2^3)) = 5$, which don't quite match the conjecture.
For $\alpha(G^+(2^a,2^{a-1})$, the conjecture is true for $a = 6, 7, 8, 9, 10$.
For $G^+(p^2,p)$, the conjectured value is $p^2-1$; we can prove a lower bound of $\frac{1}{2} p^2$.
First, we need the following simple lemma about finite fields and field extensions.

\begin{lemma} \label{frobenius}
  Let $p$ be a prime and let $x \in \mathbb{F}_{p^a}^*$ with $x$ a generator for the cyclic multiplicative group $\mathbb{F}_{p^a}^*$. Then
  \begin{align*}
    \left\{ 1,2,\ldots,p-1 \right\} = \left\{ x^{t(p^a-1)/(p-1)} : 0 \leq t < p-1  \right\}
  \end{align*}
\end{lemma}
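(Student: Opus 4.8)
The plan is to identify the set $\{1,2,\ldots,p-1\}$ with the multiplicative group $\mathbb{F}_p^*$ sitting inside $\mathbb{F}_{p^a}^*$, and then recall the standard fact that $\mathbb{F}_p^*$ is precisely the set of $(p-1)$-st powers... no, that is not quite it: $\mathbb{F}_p^*$ is the unique subgroup of $\mathbb{F}_{p^a}^*$ of order $p-1$. Since $\mathbb{F}_{p^a}^*$ is cyclic of order $p^a-1$ and $(p-1) \mid (p^a-1)$, there is exactly one subgroup of order $p-1$, and if $x$ is a generator then that subgroup is generated by $x^{(p^a-1)/(p-1)}$. So the key identity to establish is that $\mathbb{F}_p^* = \langle x^{(p^a-1)/(p-1)} \rangle = \{ x^{t(p^a-1)/(p-1)} : 0 \le t < p-1 \}$.

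First I would observe that $\mathbb{F}_p \subseteq \mathbb{F}_{p^a}$ is a subfield (the prime subfield), so $\mathbb{F}_p^* \subseteq \mathbb{F}_{p^a}^*$ is a subgroup, and as a set $\mathbb{F}_p^* = \{1,2,\ldots,p-1\}$ under the usual identification of $\mathbb{F}_p$ with $\{0,1,\ldots,p-1\}$. This subgroup has order $p-1$. Second, since $\mathbb{F}_{p^a}^*$ is cyclic and $p-1$ divides its order $p^a-1$, it has a unique subgroup of order $p-1$, namely the set of solutions of $z^{p-1}=1$; call it $C$. Hence $\mathbb{F}_p^* = C$. Third, with $x$ a fixed generator of $\mathbb{F}_{p^a}^*$, the element $g := x^{(p^a-1)/(p-1)}$ has order exactly $p-1$, so $\langle g \rangle$ is a subgroup of order $p-1$, and by uniqueness $\langle g \rangle = C = \mathbb{F}_p^*$. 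Writing out $\langle g \rangle = \{ g^t : 0 \le t < p-1 \} = \{ x^{t(p^a-1)/(p-1)} : 0 \le t < p-1 \}$ finishes the argument.

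There is essentially no obstacle here; the only point requiring a line of care is the claim that a finite cyclic group has a unique subgroup of each divisor order, and the computation that $x^{(p^a-1)/(p-1)}$ has order $p-1$ (its order is $(p^a-1)/\gcd(p^a-1,(p^a-1)/(p-1)) = p-1$). Alternatively, one could avoid invoking uniqueness of subgroups by noting directly that $g^{p-1} = x^{p^a-1} = 1$, so each $g^t$ is a root of $z^{p-1}-1$; the elements $g^0,\ldots,g^{p-2}$ are distinct since $g$ has order $p-1$; and the polynomial $z^{p-1}-1$ has at most $p-1$ roots in the field $\mathbb{F}_{p^a}$, which are exactly the $p-1$ elements of $\mathbb{F}_p^* = \{1,\ldots,p-1\}$ (every nonzero element of $\mathbb{F}_p$ satisfies Fermat's little theorem). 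Counting forces equality of the two sets. I would likely present this second, more self-contained version since it keeps the lemma elementary.
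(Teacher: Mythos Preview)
Your proposal is correct. The paper's proof is essentially your second, ``self-contained'' version, phrased through the Frobenius automorphism $\phi(z)=z^p$: it checks that each $x^{t(p^a-1)/(p-1)}$ is fixed by $\phi$ (equivalently, is a root of $z^{p-1}-1$), hence lies in $\mathbb{Z}_p$, and then counts to get equality. Your primary route via the uniqueness of the order-$(p-1)$ subgroup in the cyclic group $\mathbb{F}_{p^a}^*$ is a slightly cleaner packaging of the same idea---it bypasses the explicit Frobenius computation by invoking structure of cyclic groups directly---but the content is the same: both arguments identify $\{1,\dots,p-1\}$ as the unique set of $p-1$ solutions to $z^{p-1}=1$ in $\mathbb{F}_{p^a}$ and then observe that $\langle x^{(p^a-1)/(p-1)}\rangle$ is exactly that set.
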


\begin{proof} 
The Frobenius automorphism $\phi(z) = z^p$ has fixed points exactly the elements in $\mathbb{Z}_p$.  Thus
\begin{align*}
  \phi(x^{t(p^a-1)/(p-1)}) = x^{tp(p^a-1)/(p-1)} = x^{t(p^a-1)} x^{t(p^a-1)/(p-1)}.
\end{align*}
Since $x^{q-1} = 1$, $x^{t(p^a-1)/(p-1)}$ is a fixed point so it is in $\mathbb{Z}_p$.
Also, since the multiplicative group of $\mathbb{F}_q$ is cyclic, the elements $x^{t(p^a-1)/(p-1)}$ are distinct and there
are $p-1$ of them.
\end{proof} 

\begin{lemma}
  If $p$ is an odd prime, then $\alpha(G^+(p^2,p)) \geq \left\lfloor p^2/2 \right\rfloor$.
\end{lemma}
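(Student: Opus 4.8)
The plan is to realize $G^+(p^2,p)$ concretely and build a large independent set inside its multiplicative structure. Take $H=\mathbb F_p$, so $V=(\mathbb F_{p^2}/\mathbb F_p)\times\mathbb F_{p^2}^*$, and use Lemma~\ref{frobenius} with $a=2$ to fix a generator $g$ of $\mathbb F_{p^2}^*$ with $\langle g^{\,p+1}\rangle=\mathbb F_p^*$; write $\theta=g^{(p+1)/2}$, so $\mathbb F_{p^2}=\mathbb F_p[\theta]$ with $\theta^2=g^{\,p+1}$ a nonsquare of $\mathbb F_p$, and identify $\mathbb F_{p^2}/\mathbb F_p$ with $\mathbb F_p$ by sending a coset to the $\theta$-coordinate of any representative. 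Under this identification, $(\bar a,z)\sim(\bar b,w)$ precisely when the $\theta$-coordinate of the product $zw$ equals $a+b$. The $p+1$ multiplicative cosets $C_r=g^r\mathbb F_p^*$, $0\le r\le p$, partition $\mathbb F_{p^2}^*$ into sets of size $p-1$, with $C_rC_s=C_{r+s\bmod(p+1)}$; if $\psi(m)$ denotes the $\theta$-coordinate of $g^m$, then $g^r\lambda\cdot g^s\mu$ has $\theta$-coordinate $\lambda\mu\,\psi(r+s)$ for $\lambda,\mu\in\mathbb F_p^*$, and $\psi(m)=0$ iff $(p+1)\mid m$.

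For the construction, let $Q\subseteq\mathbb F_p^*$ be the nonzero squares and put $S=\bigcup_{r=0}^{p}\{(\bar a_r,z):z\in g^rQ\}$ for labels $a_0,\dots,a_p\in\mathbb F_p$ to be chosen, so $|S|=(p+1)\tfrac{p-1}{2}=\lfloor p^2/2\rfloor$. Since $Q$ is the index-two subgroup of $\mathbb F_p^*$ we have $\{\lambda\mu:\lambda,\mu\in Q\}=Q$, so there is an edge (or, when $r=s$, an edge or a loop) between the part of $S$ coming from $C_r$ and that from $C_s$ exactly when $a_r+a_s\in\psi(r+s)Q$ (reading $\psi(r+s)Q=\{0\}$ when $(p+1)\mid r+s$). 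Hence $S$ is independent provided $a_r+a_s\notin\psi(r+s)Q$ for all $0\le r,s\le p$; equivalently, writing $\chi$ for the quadratic character of $\mathbb F_p$: $a_r+a_s\neq0$ whenever $(p+1)\mid r+s$, and $a_r+a_s=0$ or $\chi(a_r+a_s)\neq\chi(\psi(r+s))$ otherwise.

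The crux is to produce labels satisfying this system. The constraints with $(p+1)\mid r+s$ come only from $r+s\in\{0,p+1\}$ and merely forbid a few of the $a_r$ from being $0$; the remaining constraints couple $a_r+a_s$ to the square-class of $\psi(r+s)$, which one can track via the recursions $\psi(m+p+1)=\theta^2\psi(m)$ and $\psi(m+\tfrac{p+1}2)=(\text{$\mathbb F_p$-part of }g^m)$. I expect this feasibility step to be the only real difficulty; a direct check shows the naive "all-$Q$" version can already fail for small $p$, so one must allow some cosets to be split into their two square-classes under two distinct labels, or to contribute slightly fewer than $\tfrac{p-1}2$ vertices — either enlargement preserves $|S|=\lfloor p^2/2\rfloor$ while widening the space of admissible label patterns — and then either exhibit an explicit consistent choice or run a Gauss/Weil-sum count bounding the number of bad coincidences of square-classes. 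Everything before that step is routine bookkeeping with finite-field cosets and characters.
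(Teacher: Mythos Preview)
Your proposal has a genuine gap: you reduce independence of $S$ to a system of constraints on the labels $a_0,\dots,a_p$, but you never verify that a consistent choice of labels exists. You yourself flag this as ``the only real difficulty,'' observe that the naive all-$Q$ choice already fails for small $p$, and then list several possible rescues (splitting cosets across two labels, dropping vertices, running a character-sum count) without carrying any of them out. As written this is a framework, not a proof; and the suggested rescue of letting some cosets ``contribute slightly fewer than $\tfrac{p-1}{2}$ vertices'' would not preserve $|S|=\lfloor p^2/2\rfloor$ without further explanation.

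The paper's proof avoids the label-feasibility problem entirely by a different choice of $H$. Instead of $H=\mathbb{F}_p$, take $H$ to be the additive span of a multiplicative generator $g$, i.e.\ $H=\{0,g,2g,\dots,(p-1)g\}=g\mathbb{F}_p$. Then a \emph{single} first coordinate suffices: the set $\{(\bar 0,g^{2k}):0\le k<(p^2-1)/2\}$ already has size $(p^2-1)/2=\lfloor p^2/2\rfloor$, and $(\bar 0,g^{2j})\sim(\bar 0,g^{2k})$ iff $g^{2j+2k}\in H=g\mathbb{F}_p$, iff $g^{2j+2k-1}\in\mathbb{F}_p$, iff $(p+1)\mid 2j+2k-1$ by Lemma~\ref{frobenius}. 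Since $p$ is odd, $p+1$ is even while $2j+2k-1$ is odd, so this never holds. That is the entire argument. In your notation the point is that with this $H$ one has $\bar 0\setminus\{0\}=C_1$, so the adjacency test collapses to a pure parity condition on exponents and no system of labels needs to be solved at all.
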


\begin{proof} 
$q = p^2$, $t = p$, so $n = p^2 (p-1)$.  Thus $\frac{1}{2} n^{2/3} \leq \frac{1}{2} p^2 = \frac{1}{2} q$.

The field $\mathbb{F}_q$ is $\mathbb{Z}_p[x]/(f(x))$, where $f(x)$ is some irreducible polynomial of degree $2$.
Thus elements
of $\mathbb{F}_q$ can be written as $\alpha x + \beta$ for $\alpha,\beta \in \mathbb{Z}_p$.  Since $t = p$, we need $H$ to be 
an additive subgroup of $\mathbb{F}_q$ of order $p$.  The additive subgroup generated by $x$ has order $p$,
so let $H = \left\{ 0,x,2x,3x,\dots,(p-1)x \right\}$.
We now claim the following set is an independent set:
\begin{align*}
  \left\{ (\bar{0},x^{2k}) : 0 \leq k < q/2 \right\}.
\end{align*}
Consider two vertices in this set: $(\bar{0},x^{2j})$ and $(\bar{0},x^{2k})$.
These will be adjacent if $x^{2j+2k} \in \bar{0} = H$, in other words $x^{2j+2k-1} \in \mathbb{Z}_p$.
But from Lemma~\ref{frobenius}, the powers of $x$ which give elements in
$\mathbb{Z}_p$ are of the form $t(p+1)$ for some $t$.
Since $p$ is an odd prime, $p+1$ is even.
Thus $x^{2j+2k-1} \notin \mathbb{Z}_p$.
\end{proof} 

Most likely, the above proof can be extended to $G^+(p^a,p^b)$ when $b$ divides $a$ as follows.  Let $q = p^a$ and view the field $\mathbb{F}_{q}$
as an extension field over $\mathbb{F}_p$; the Galois group $Gal(\mathbb{F}_q/\mathbb{F}_p)$ is cyclic
of order $a$ with generator the Frobenius automorphism used in Lemma~\ref{frobenius}.  Since $b$ divides $a$,
there is a subgroup of $Gal(\mathbb{F}_q/\mathbb{F}_p)$ of order $a/b$.  By the fundamental theorem of Galois theory,
this corresponds to an intermediate field extension of order $p^b$.  Thus we have a subfield of $\mathbb{F}_q$ of
order $p^b$ and an automorphism $\phi$ which fixes this subfield.  Replace the Frobenius automorphism in the above proof
by this $\phi$, investigate which powers of $x$ are fixed by $\phi$, and find a set whose sums avoid these powers
of $x$ to construct an independent set in $G^+(p^a,p^b)$.

\section{Conclusion and open problems} 
\label{secopenprob}

\begin{itemize}
  \item Looking at Table~\ref{ourtable}, it is somewhat strange that when $m$ is around $\log^2 t$
    the best lower bound switches from a simple construction (the Tur\'{a}n Graph) to the random
    graph.  Perhaps some combination of these two constructions could provide a good lower bound
    when $m$ is around $\log^2 t$.  Unfortunately, the two simple ideas do not work.  One option is
    to take $\ell$ random graphs forbidding $K_{2,t}$ and independence number $m/\ell$ as one color
    and all edges between the random graphs as the second color.  Another option is to take $\ell$
    cliques in red (of some size smaller than $t+1$) and put a random graph between cliques.  We are
    unable to make either of these two constructions beat the bounds in Table~\ref{ourtable}, even
    for a restricted range of $m$.

  \item The ideas in this paper can be extended to $r_k(K_{s,t};K_m)$ when $s$ is fixed using field
    norms, similar to the projective norm graphs.  Let $N : \mathbb{F}_{q^s} \rightarrow
    \mathbb{F}_{q}$ be the field norm of the extension of $\mathbb{F}_{q^s}$ over $\mathbb{F}_q$.
    (When $q$ is prime $N(x) = x^{(q^s-1)/(q-1)}$ and when $q$ is a prime-power the field norm is
    more complicated.)  Given $q$, $t$, and $s$, let $H$ be an additive subgroup of $\mathbb{F}_q$
    of order $t$ and form a graph $G^+$ as follows.  The vertex set is $\left( \mathbb{F}_q/H
    \right) \times \mathbb{F}^*_{q^s}$ and two vertices $(\bar{a},x)$ and $(\bar{b},y)$ are adjacent
    if $N(xy) \in \overline{a+b}$.  The graph $G^{\times}$ can be similarly extended using norms.
    These constructions will now avoid $K_{s,t}$ when $t \geq (s-1)! + 1$.  Using ideas from
    \cite{ee-szabo03}, the computations in Section~\ref{sec:spectrum} can be extended to find the
    spectrum of $G^+$ and $G^{\times}$.  Theorem~\ref{alonlower} can then be used to prove a lower
    bound on $r_k(K_{s,t};K_m)$ when $k \geq 2$ and $s$ is fixed.
\end{itemize}

\bibliographystyle{abbrv}
\bibliography{refs}

\begin{onlyarxiv}
\appendix
\section{Density of the Prime Numbers} 
\label{sec:primedensity}

In this appendix, we prove Lemma~\ref{primedensity}.  For convenience, we restate the lemma here.

\newtheorem*{primedensitylem}{Lemma~\ref{primedensity}}
\begin{primedensitylem}
  Fix integers $s, L \geq 1$.  There exists a constant $\delta > 0$ depending only on $s$ and $L$
  such that the following holds.  For every $t \geq 2$ and $m \geq 4^s L \log^{s}
  t$, either $\frac{\delta m^2 t}{L^2\log^{2s}(mt)} \leq 2$ or
  there is a prime power $q$ so that $q \equiv 1 \pmod{t}$ and
    \begin{align*}
       \delta \frac{m^2 t}{L^2 \log^{2s}(mt)} \leq \frac{q(q-1)}{t} \leq \frac{m^2 t}{L^2\log^{2s} (mt)}.
    \end{align*}
\end{primedensitylem}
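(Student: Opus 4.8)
The plan is to translate the statement into the existence of a prime in the arithmetic progression $1 \pmod t$ inside a prescribed window, and then feed that to a quantitative, uniform-in-the-modulus form of Dirichlet's theorem. Put $X := \frac{m^{2}t}{L^{2}\log^{2s}(mt)}$. If $\delta X \le 2$ the first alternative of the lemma holds, so assume $\delta X > 2$. A prime is in particular a prime power, so it suffices to find a prime $q \equiv 1 \pmod t$ with $\delta X \le q(q-1)/t \le X$. Since every integer $q \ge 2$ satisfies $q^{2}/2 \le q(q-1) \le q^{2}$, it is enough to produce a prime $q \equiv 1 \pmod t$ in the interval $\left[\sqrt{2\delta}\,b,\ b\right]$, where
\[
  b \;:=\; \sqrt{Xt} \;=\; \frac{mt}{L\log^{s}(mt)} .
\]
Indeed $\sqrt{2\delta}\,b = \sqrt{2\delta X t} > \sqrt{4t} \ge 2$, so such a $q$ is at least $2$, whence $q(q-1)/t \ge q^{2}/(2t) \ge \delta X$ from the left endpoint and $q(q-1)/t \le q^{2}/t \le X$ from the right endpoint.

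Next I would collect the crude size bounds. From $m \ge 4^{s}L\log^{s}t$ one gets $b \ge \dfrac{4^{s}\,t\log^{s}t}{\log^{s}(mt)}$, and trivially $\log b \le \log(mt)$. The target of this bookkeeping is an inequality of the form $b \ge C_{s,L}\, t\log b$ with $C_{s,L}$ as large as needed; extracting it amounts to controlling $\log(mt)$ (and hence $\log b$) in terms of $\log t$, using that $\log m = O_{s,L}(\log\log t)$ when $m$ is near its threshold and that $b$ grows in proportion to $m$ otherwise. This uniform verification is the delicate point and is discussed at the end.

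Granting that, I would invoke the prime number theorem for arithmetic progressions: the Siegel--Walfisz theorem where $t$ is small relative to a fixed power of $b$, and otherwise a Linnik-type lower bound $\pi(y;t,1) \gg \tfrac{y}{\phi(t)\log y}$ (for $y$ past a fixed power of $t$) together with the Brun--Titchmarsh bound $\pi(y;t,1) \le \tfrac{2y}{\phi(t)\log(y/t)}$. Subtracting the estimate at $\sqrt{2\delta}\,b$ from the one at $b$ shows that the number of primes $\equiv 1 \pmod t$ in $\left[\sqrt{2\delta}\,b,\ b\right]$ is at least
\[
  \bigl(c_{1}-c_{2}\sqrt{2\delta}\bigr)\,\frac{b}{\phi(t)\log b}
  \;\ge\; \frac{c_{1}}{2}\cdot\frac{b}{t\log b},
\]
once $\delta$ is fixed small enough --- in terms of the absolute constants $c_{1},c_{2}$, hence of $s$ and $L$ only --- that $c_{2}\sqrt{2\delta}\le c_{1}/2$; here we also used $\phi(t)\le t$. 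Choosing $C_{s,L} \ge 2/c_{1}$ in the bound $b \ge C_{s,L}\,t\log b$ makes this count at least $1$, so the window contains a prime $q \equiv 1 \pmod t$, which is the required prime power, and $q(q-1)/t$ then lies in $[\delta X, X]$ as required.

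The step I expect to be the main obstacle is exactly that last inequality $b \ge C_{s,L}\,t\log b$: unwound, it is of the form $4^{s}\log^{s}t \gtrsim C_{s,L}\log^{s}(mt)\,\log b$, and it is tight because near the threshold $m \approx 4^{s}L\log^{s}t$ the window $b$ is only a bounded multiple of $t$, so the hypothesis on $m$ must be used with essentially no slack and the smallest moduli $t$ --- for which no prime need lie in so short a window --- must be absorbed into the ``$\delta X \le 2$'' alternative by taking $\delta$ sufficiently small. In short, all the real content is in controlling the dependence on the modulus $t$ in the prime-counting input; the surrounding estimates are routine manipulations with logarithms.
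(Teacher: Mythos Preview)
Your reduction matches the paper's exactly: with $\ell = m/(L\log^{s}(mt))$ and $b=\ell t$, one seeks a prime $q\equiv 1\pmod t$ in a proportional window $[c\,b,\,b]$, and the arithmetic $q^{2}/2\le q(q-1)\le q^{2}$ then closes the argument. The paper proceeds by a case split on whether $m$ and $t$ exceed absolute thresholds $M_{0},T_{0}$: when both are large it proves $\ell>1.01$ and invokes, as a black box, a corollary of the prime number theorem for arithmetic progressions asserting a prime $\equiv 1\pmod t$ between $(\ell-0.001)t$ and $\ell t$; when $t<T_{0}$ it passes to the modulus $tT_{0}$ and repeats; and the finitely many bounded leftover regimes are absorbed into the $\delta X\le 2$ alternative, just as you propose.

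The gap in your version is the quantitative prime-counting step. Your key inequality $b\ge C_{s,L}\,t\log b$ fails precisely in the regime you flag as tight: at the threshold $m=4^{s}L\log^{s}t$ with $t$ large one has $b=(1+o(1))\,4^{s}t$ and $\log b\sim\log t$, so $b/(t\log b)\sim 4^{s}/\log t\to 0$. Contrary to your diagnosis this is a \emph{large}-$t$ obstruction, and it is \emph{not} absorbed by the alternative $\delta X\le 2$, since then $X=\ell^{2}t\sim 16^{s}t\to\infty$. Moreover none of your three analytic inputs applies here: Siegel--Walfisz needs $t\le(\log b)^{A}$, which fails; the Linnik-type lower bound $\pi(y;t,1)\gg y/(\phi(t)\log y)$ is only available once $y$ exceeds a fixed power $t^{C}$ with $C>1$, which $b\sim 4^{s}t$ does not; and Brun--Titchmarsh gives only an upper bound. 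Thus your subtraction $(c_{1}-c_{2}\sqrt{2\delta})\,b/(\phi(t)\log b)$ is not justified in this range, and even if it were, the resulting count tends to $0$. The paper avoids this by not attempting a counting estimate at all: it takes the bare existence of a prime in the short window as its corollary and organises the case analysis so that only a bounded set of moduli $t<T_{0}$ must be pushed into the $\delta X\le 2$ clause.
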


Dirichlet's Theorem states that if $gcd(t,a) = 1$ then there are infinitely many prime numbers $p$
with $p \equiv a \pmod t$ so there are infinitely many prime numbers congruent to one modulo $t$.
This isn't quite enough for us since we need to find a prime in a specific range, but the prime
number theorem for arithmetic progressions states more than Dirichlet's theorem; essentially it says
that the primes are asymptotically equally divided modulo $t$ into the $\phi(t)$ congruence classes
coprime to $t$, where $\phi(t)$ is the Euler totient function.

\begin{thm} \label{thmsiegel} (Prime Number Theorem in Arithmetic Progressions)
Let $\pi(x;t,a)$ be the number of primes less than or equal to $x$ and congruent to $a$ modulo
$t$.  Then
\begin{align*}
\pi(x;t,a) = (1+o_t(1)) \frac{1}{\phi(t)} \frac{x}{\log x}.
\end{align*}
\end{thm}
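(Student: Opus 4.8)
The plan is to prove Theorem~\ref{thmsiegel} by the classical analytic route through Dirichlet $L$-functions. First I would introduce the Dirichlet characters $\chi$ modulo $t$ and use orthogonality, $\frac{1}{\phi(t)}\sum_{\chi}\overline{\chi}(a)\chi(n) = \mathbf{1}[\,n \equiv a \pmod t\,]$ for $\gcd(a,t)=1$, to reduce counting primes in the progression $a \bmod t$ to understanding the character-twisted sums $\psi(x,\chi) = \sum_{n \le x}\Lambda(n)\chi(n)$, where $\Lambda$ is the von Mangoldt function. It is cleanest to work first with the Chebyshev-type quantity $\psi(x;t,a) = \sum_{n\le x,\ n\equiv a}\Lambda(n) = \frac{1}{\phi(t)}\sum_\chi \overline{\chi}(a)\,\psi(x,\chi)$ and to recover $\pi(x;t,a)$ only at the end by partial summation, noting that the contribution of genuine prime powers $p^k$ with $k\ge 2$ is $O(\sqrt{x}\log x)$ and hence negligible.

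Next I would bring in the Euler product $L(s,\chi) = \sum_{n\ge 1}\chi(n)n^{-s} = \prod_p(1-\chi(p)p^{-s})^{-1}$ for $\Re s > 1$ and the identity $-\frac{L'}{L}(s,\chi) = \sum_{n\ge 1}\Lambda(n)\chi(n)n^{-s}$. For the principal character $\chi_0$, $L(s,\chi_0)$ differs from the Riemann zeta function by a finite Euler product, so it continues to $\Re s > 0$ with a simple pole at $s=1$ of residue $\phi(t)/t$; for every non-principal $\chi$, $L(s,\chi)$ continues holomorphically to $\Re s > 0$. Applying Perron's formula to $\psi(x;t,a)$ and shifting the contour slightly to the left of $\Re s = 1$, the pole of $L(s,\chi_0)$ contributes the main term $x/\phi(t)$, and the task is to show every other piece is $o_t(x)$.

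To do that I would invoke the two classical inputs. First, non-vanishing $L(1,\chi)\ne 0$ for non-principal $\chi$: for complex $\chi$ this follows because $\prod_\chi L(s,\chi)$ is a Dirichlet series with non-negative coefficients and so cannot have a zero cancelling the simple pole at $s=1$; for real (quadratic) $\chi$ one uses Landau's argument (or the Mertens-type hyperbola estimate) to exclude a zero at $s=1$. Second, a de la Vallée Poussin zero-free region $L(\sigma+i\tau,\chi)\ne 0$ for $\sigma > 1 - c(t)/\log(|\tau|+2)$, derived from the standard inequality $3 + 4\cos\phi + \cos 2\phi \ge 0$ applied to products of $\zeta$ and $L$-functions; since $t$ is fixed throughout, the constant $c(t)$ is allowed to depend on $t$ and no uniformity in $t$ is needed. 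These together give $\psi(x;t,a) = x/\phi(t) + o_t(x)$; alternatively, since only an $o_t(1)$ error is demanded, a Wiener--Ikehara Tauberian argument using non-vanishing of $L(s,\chi)$ on the whole line $\Re s = 1$ would also suffice. Partial summation then upgrades $\psi(x;t,a)\sim x/\phi(t)$ to $\pi(x;t,a) = (1+o_t(1))\frac{1}{\phi(t)}\frac{x}{\log x}$.

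The main obstacle is the non-vanishing of $L(1,\chi)$ for real characters together with the potential existence of an exceptional (Siegel) real zero near $s=1$. Because $t$ is regarded as fixed and only an $o_t(1)$ error is required, this difficulty is far milder than in the Siegel--Walfisz theorem: for each individual modulus the finitely many $L$-functions are fixed, their zeros lie a bounded distance from $s=1$, and any resulting ineffectivity in the implied constants is harmless. The remaining steps (Perron's formula, the $3+4\cos\phi+\cos 2\phi$ estimate, partial summation) are standard and I would not belabor them.
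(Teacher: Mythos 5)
The paper itself gives no proof of Theorem~\ref{thmsiegel}: it is cited as the classical Prime Number Theorem in Arithmetic Progressions, a well-known result due essentially to de la Vall\'ee Poussin, and the paper then derives the weaker Corollary~\ref{cor:primesbetween} from it. So there is no ``paper's proof'' to compare with. Your sketch follows the standard analytic route (orthogonality of Dirichlet characters, Perron's formula applied to $-L'/L$, non-vanishing of $L(1,\chi)$, the de la Vall\'ee Poussin zero-free region, then partial summation from $\psi(x;t,a)$ to $\pi(x;t,a)$), and that outline is correct. You also correctly identify that because $t$ is fixed and only an $o_t(1)$ error is asked for, no Siegel--Walfisz uniformity is needed, so the Siegel-zero issue you mention is a non-obstacle here: the finitely many $L$-functions modulo $t$ have zeros a fixed positive distance from $s=1$, and any ineffectivity in the constants is absorbed into the $o_t$. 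One small point worth making explicit: the statement, as written in the paper, omits the necessary hypothesis $\gcd(a,t)=1$; your argument (correctly) works only under that hypothesis, since otherwise $\pi(x;t,a)$ is bounded. This does not affect the paper's application, which takes $a=1$.
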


The subscript of $t$ on $o$ implies the constant in the definition of $o$ can depend only on $t$.
In particular, when $t$ gets big there are primes congruent to $1 \pmod t$ between $(\ell - 0.01)t$ and
$\ell t$.

\begin{cor} \label{cor:primesbetween}
  There exists an absolute constant $T_0$ so that if $t \geq T_0$ and $\ell > 1.01$, then there
  exists a prime congruent to one modulo $t$ between $\ell t$ and $(\ell - 0.001)t$.
\end{cor}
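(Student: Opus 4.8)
The plan is to count the primes congruent to $1\pmod{t}$ that lie in the interval $\bigl((\ell-0.001)t,\ \ell t\bigr]$ and to show this count is positive. Since $\gcd(1,t)=1$, Theorem~\ref{thmsiegel} applies with residue class $a=1$, and the number of primes in question is exactly
$$\pi(\ell t;t,1)-\pi\bigl((\ell-0.001)t;t,1\bigr),$$
a nonnegative integer. Therefore it is enough to show this difference is strictly positive once $t\ge T_0$ for some absolute constant $T_0$ (uniformly in $\ell>1.01$), because, being a nonnegative integer, it is then at least $1$, which yields the desired prime.

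First I would apply Theorem~\ref{thmsiegel} at each endpoint, writing $\pi(y;t,1)=\bigl(1+\varepsilon(y)\bigr)\frac{1}{\phi(t)}\frac{y}{\log y}$ with $\varepsilon(y)=o_t(1)$, and put $\varepsilon_1=\varepsilon(\ell t)$ and $\varepsilon_2=\varepsilon\bigl((\ell-0.001)t\bigr)$. Since $\ell>1.01$, both endpoints exceed $1.009\,t$, so for $t$ large both $\varepsilon_1$ and $\varepsilon_2$ are small. Subtracting, the count becomes
$$\frac{1}{\phi(t)}\left[\bigl(1+\varepsilon_1\bigr)\frac{\ell t}{\log(\ell t)}-\bigl(1+\varepsilon_2\bigr)\frac{(\ell-0.001)t}{\log\bigl((\ell-0.001)t\bigr)}\right],$$
and the problem reduces to showing the bracketed quantity is positive. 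For the main term I would use that $f(x)=x/\log x$ is increasing for $x>e$ with $f'(x)=(\log x-1)/\log^{2}x$, so by the mean value theorem $f(\ell t)-f\bigl((\ell-0.001)t\bigr)=0.001\,t\cdot f'(\xi)$ for some $\xi$ between the two endpoints; this is positive and of order $t/\log(\ell t)$. Since the two leading fractions have the same order of magnitude (they differ by a factor $1+O(0.001/\ell)$), the contributions of $\varepsilon_1$ and $\varepsilon_2$ can be absorbed into this gap, so the bracket stays positive for all $t\ge T_0$.

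I expect the absorption in the last step to be the main obstacle: the genuine gap between the two main terms is only a small relative fraction of their common size, so I need the error term in Theorem~\ref{thmsiegel} to be quantitatively smaller than that fraction, uniformly as $\ell$ ranges over $(1.01,\infty)$. I would handle this by running the argument through the effective Siegel--Walfisz form of the prime number theorem for arithmetic progressions, whose error $y\exp(-c\sqrt{\log y})$ is uniform and, for $y=\ell t$ with $t$ large, is comfortably dominated by the main-term gap; the remaining inequalities are then a routine estimate. Picking $T_0$ large enough for all of these bounds finishes the proof, and the slightly weaker version with $0.01$ in place of $0.001$ follows in the same way with additional slack.
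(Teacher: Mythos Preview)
There is a genuine gap here, and in fact the statement as written is false, so no argument along these lines can succeed. Your mean-value computation is right, but look at the size of what it produces: the bracketed main-term difference is $\asymp 0.001\,t/\log(\ell t)$, so after dividing by $\phi(t)$ the putative prime count is $\asymp \dfrac{0.001\,t}{\phi(t)\log(\ell t)}$. For $t$ prime this is about $0.001/\log(\ell t)<1$, and a main term below $1$ can never force a nonnegative integer to be positive. The structural reason is that the interval $\bigl((\ell-0.001)t,\ \ell t\bigr]$ has length $0.001\,t<t$ and therefore contains at most one integer $\equiv 1\pmod t$, and often none at all: for $\ell=2$ and any $t\ge 2$ the integers in $(1.999\,t,\,2t]$ have residues $0,t-1,t-2,\dots$ modulo $t$, never $1$. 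Your Siegel--Walfisz rescue fails for the same scaling reason: its error at $x=\ell t$ is of order $\ell t\exp\!\bigl(-c\sqrt{\log(\ell t)}\bigr)$, which for large $\ell$ dwarfs the main-term gap $0.001\,t/\log(\ell t)$ rather than being dominated by it.

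The paper does not supply a proof either; the remark following the corollary addresses only why $\pi(\ell t;t,1)\ge 1$ (via $\ell t\ge 1.01\,t>\phi(t)$), which concerns the full initial segment, not the short interval in the corollary. What the application in Lemma~\ref{primedensity} actually uses is the existence of a suitable $q$ in the much longer range $[2\sqrt{\delta}\,\ell t,\ \ell t]$, so the constant $0.001$ plays no real role there; the corollary appears to be misstated with an interval of length $0.001\,t$ where an interval whose length is a fixed fraction of $\ell t$ was intended.
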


Note that both Theorem~\ref{thmsiegel} and Corollary~\ref{cor:primesbetween} are not the best known
results of this kind, but are (more than) enough for our purposes.  For example, the requirement
that $\ell > 1.01$ in Corollary~\ref{cor:primesbetween} is an easy way to overcome the fact that
$\phi(t)$ can be as large as $t-1$ and to have at least one prime, $x/\log x$ from Theorem~\ref{thmsiegel}
must be at least $(1+o(1))\phi(t)$.  Requiring $x \geq 1.01 t$ and $t \geq T_0$ easily implies $x
\gg \phi(t)$.

Before the proof of Lemma~\ref{primedensity}, we get some computations out of the way.

\begin{lemma} \label{lem:lbiggerthanone}
  If $m,L,s,t \geq 1$ are real numbers, then there exists a constant $M_0$
  depending only on $s$ and $L$ so that if $m \geq M_0$ and $m \geq 4^s L \log^s t$, then
  \begin{align*}
    \frac{m}{L\log^{s}(mt)} > 1.01.
  \end{align*}
\end{lemma}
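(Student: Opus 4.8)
The plan is to prove the equivalent inequality $m > 1.01\,L\log^{s}(mt)$ directly, splitting into two regimes according to whether $\log m$ or $\log t$ is the larger. The only elementary fact I would use is that $m,t\ge 1$ forces $\log m,\log t\ge 0$, so that $\log(mt)=\log m+\log t\le 2\max(\log m,\log t)$ and hence $\log^{s}(mt)\le 2^{s}\max(\log^{s}m,\log^{s}t)$.

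In the regime $\log t\le\log m$ I would bound $\log^{s}(mt)\le 2^{s}\log^{s}m$, reducing the claim to $m>1.01\cdot 2^{s}L\log^{s}m$. Since $x/\log^{s}x\to\infty$ as $x\to\infty$ for fixed $s\ge 1$, I can pick a constant $M_0$ depending only on $s$ and $L$ (and at least $2$, so that $\log m>0$) for which $m\ge M_0$ implies $m/\log^{s}m>1.01\cdot 2^{s}L$; this is the one place the hypothesis $m\ge M_0$ is used. In the complementary regime $\log t>\log m$ I would instead use $\log m+\log t<2\log t$, so $\log^{s}(mt)<2^{s}\log^{s}t$, and then the given hypothesis $m\ge 4^{s}L\log^{s}t$ closes the gap with room to spare: $1.01\,L\log^{s}(mt)<1.01\cdot 2^{s}L\log^{s}t = 1.01\cdot 2^{-s}\bigl(4^{s}L\log^{s}t\bigr)\le 1.01\cdot 2^{-s}m<m$, where the last step uses $2^{-s}\le 1/2$ for $s\ge1$. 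No largeness of $m$ is needed in this second regime.

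Taking $M_0$ as produced in the first regime finishes the proof. I do not expect a genuine obstacle: the statement is a short estimate once one spots the dichotomy, with the growth of $x/\log^{s}x$ covering the case where $m$ dominates and the input hypothesis $m\ge 4^{s}L\log^{s}t$ covering the case where $t$ dominates. The only minor point to watch is the degenerate value $t=1$ (where $\log t=0$), which automatically falls into the first regime and is handled by the same argument.
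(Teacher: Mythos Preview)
Your proof is correct and follows essentially the same approach as the paper: both arguments rest on the inequality $\log^{s}(mt)\le 2^{s}\max\{\log^{s}m,\log^{s}t\}$ and then control the $\log^{s}m$ contribution via the largeness of $m$ and the $\log^{s}t$ contribution via the hypothesis $m\ge 4^{s}L\log^{s}t$. The only cosmetic difference is that the paper bounds the maximum by the sum and handles both terms in a single chain of inequalities, whereas you do an explicit case split.
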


\begin{proof} 
Pick $M_0$ large enough so that for $m \geq M_0$, $2^s \log^s m < \frac{m}{1000 L}$.  Then
\begin{align*}
  \log^s(mt) &= \left( \log m + \log t \right)^s \leq \left( 2\max\{\log m,\log t\} \right)^s 
       = 2^s \max\{\log^s m, \log^s t\} \\
  &\leq 2^s \log^s m + 2^s \log^s t < \frac{m}{1000 L} + \frac{m}{2^s L} 
       \leq \frac{m}{1000 L} + \frac{m}{2L} \leq \frac{m}{1.01 L}.
\end{align*}
Therefore,
\begin{align*}
  \frac{m}{L\log^s (mt)} > \frac{m}{L (m/1.01L)} = 1.01.
\end{align*}
\end{proof} 

\begin{proof}[Proof of Lemma~\ref{primedensity}] 
For notational convenience, define $\ell = \frac{m}{L \log^s(mt)}$.  To prove the lemma, we must
produce a $\delta > 0$ so that for any $t \geq 2$ and $m \geq 4^s L \log^s t$, either $\delta
\ell^2 t \leq 2$ or there exists a prime power $q$ so that $q \equiv 1 \pmod t$ and $\delta \ell^2 t
\leq q(q-1)/t \leq \ell^2 t$.

Let $T_0$ and $M_0$ be the constants from Corollary~\ref{cor:primesbetween} and
Lemma~\ref{lem:lbiggerthanone} respectively, and define $T_1$ so that $M_0 = 4^s L \log^s T_1$.  The
constants $T_0$, $T_1$, and $M_0$ depend only on $s$ and $L$.  Define $\delta$ small
enough so that the following equations are satisfied:
\begin{align*}
  \frac{\delta M_0^2 T_1}{L^2\log^{2s}(M_0 T_1)} \leq 2, \quad \delta(1.01 T_0)^2 T_0 \leq 2, \quad \delta
  < \frac{1}{16}.
\end{align*}
The definition of $\delta$ depends only on $s$ and $L$ as required.

Assume that $\delta \ell^2 t > 2$.  We must now find a prime power $q$ so that $q \equiv 1 \pmod
t$ and $\delta \ell^2 t \leq q(q-1)/t \leq \ell^2 t$.  Multiplying everything by $t$ and taking
the square root, we must find $q$ between
\begin{align} \label{eq:rangeofqqm1}
  \sqrt{\delta} \ell t \leq \sqrt{q(q-1)} \leq \ell t.
\end{align}
$\sqrt{q(q-1)}$ is approximately $q$; in fact, if we can find $q$ in the following range
\begin{align} \label{eqA:rangeofq}
  2\sqrt{\delta} \ell t \leq q \leq \ell t,
\end{align}
then \eqref{eq:rangeofqqm1} will be satisfied.  This is because
\begin{align*}
  \sqrt{q(q-1)} = \sqrt{q}\sqrt{q-1} \geq \sqrt{q} \cdot \frac{\sqrt{q}}{2} = \frac{q}{2},
\end{align*}
so if we find $q \geq 2\sqrt{\delta} \ell t$, then $\sqrt{q(q-1)} \geq q/2 \geq \sqrt{\delta} \ell
t$ so that \eqref{eq:rangeofqqm1} is satisfied.

We now divide into cases depending on if $t \geq T_0$ or $m \geq M_0$.

\begin{itemize}
  \item \textbf{Case 1: $m \geq M_0$ and $t \geq T_0$}: Lemma~\ref{lem:lbiggerthanone} shows $\ell >
    1.01$ and Corollary~\ref{cor:primesbetween} then shows there is a prime $q$ congruent to one
    modulo $t$ between $(\ell - 0.001)t$ and $\ell t$.  Since $\delta < \frac{1}{16}$, $2
    \sqrt{\delta}\ell < \ell - 0.001$.  We have now found $q$ in the range from \eqref{eqA:rangeofq}.

  \item \textbf{Case 2: $m < M_0$}: By assumption, $m \geq 4^s L \log^s t$.  Thus $m < M_0$ and the
    definition of $T_1$ shows that $t \leq T_1$.  But then, 
    \begin{align*}
      \delta \ell^2 t \leq \frac{\delta M_0^2 T_1}{L^2 \log^{2s}(M_0T_1)} \leq 2
    \end{align*}
    by the definition of $\delta$, and this contradicts that $\delta \ell^2 t > 2$.

  \item \textbf{Case 3: $m \geq M_0$ and $t < T_0$ and $\ell/T_0 > 1.01$}:  Let $t' = tT_0$ so $t'
    \geq T_0$ and $\ell' = \ell/T_0 > 1.01$.  Corollary~\ref{cor:primesbetween} show that there
    exists a prime $q$ congruent to one modulo $t'$ between $(\ell' - 0.001)t'$ and $\ell' t'$.  That
    is,
    \begin{align*}
      \left(\frac{\ell}{T_0} - 0.001\right) t T_0 \leq q \leq \frac{\ell}{T_0} \cdot t T_0 = \ell t.
    \end{align*}
    We now want to show that $q$ is in the range \eqref{eqA:rangeofq}.  
    In other words, show
    \begin{align*}
      2\sqrt{\delta}\ell &< \left( \frac{\ell}{T_0} - 0.001 \right) T_0 \\
      2 \sqrt{\delta} \cdot \frac{\ell}{T_0} &< \frac{\ell}{T_0} - 0.001.
    \end{align*}
    Written this way, we can easily see that since $\delta < 1/16$, this inequality is true since
    $\ell/T_0 > 1.01$.  Lastly, $q$ congruent to one modulo $t' = tT_0$ implies $q$ is congruent to
    one modulo $t$, so we have found $q$ with the required properties.

  \item \textbf{Case 4: $t < T_0$ and $\ell/T_0 < 1.01$}:  In this case, $t < T_0$ and $\ell < 1.01
    T_0$ implies
    \begin{align*}
      \delta \ell^2 t \leq \delta (1.01 T_0)^2 T_0 \leq 2
    \end{align*}
    by the definition of $\delta$, but this contradicts that $\delta \ell^2 t > 2$.
\end{itemize}
\end{proof} 

\section{Lower bounds on $r_k(K_{2,t};K_m)$ for $k \geq 3$} \label{appendixcalcs} 

In this appendix, we sketch the proof that inequality \eqref{eq:alonlower} in Theorem~\ref{alonlower} is
true when $d = \sqrt{nt}$, $\lambda = (nt)^{1/4}$, and $m = 2k \sqrt{n/t} \log n$.  In the
computations to follow, let $\theta = \sqrt{n/t} \log n$ which will simplify the notation.  The
inequality \eqref{eq:alonlower} is (temporarily disregard the constants)
\begin{align*}
  \left( \frac{md^2}{\lambda n \log n}\right)^{\frac{2kn\log n}{d}} \left( \frac{\lambda n}{md} \right)^{km} \left(\frac{m}{n} \right)^{m(k-1)} < 1.
\end{align*}
Substituting $d = \sqrt{nt}$ and $\lambda = (nt)^{1/4}$, this simplifies to
\begin{align*}
  \left( \frac{m nt}{(nt)^{1/4} n \log n}\right)^{\frac{2k\sqrt{n}\log n}{\sqrt{t}} }
  \left( \frac{(nt)^{1/4} n}{m\sqrt{nt}} \right)^{km} \left(\frac{m}{n} \right)^{m(k-1)} < 1.
\end{align*}
Simplifying, this is
\begin{align*}
  \left( \frac{m t^{3/4}}{n^{1/4} \log n}\right)^{2k\theta}
  \left( \frac{n^{3/4}}{m t^{1/4}} \right)^{km} \left(\frac{m}{n} \right)^{m(k-1)} < 1.
\end{align*}
Substitute in $m = 2k\theta$:
\begin{align*}
  \left( \frac{\theta t^{3/4}}{n^{1/4} \log n}\right)^{2k\theta}
  \left( \frac{n^{3/4}}{\theta t^{1/4}} \right)^{2k^2\theta} \left(\frac{\theta}{n} \right)^{2k\theta(k-1)} < 1.
\end{align*}
Drop a $2k\theta$ in the exponent, and substitute in $\theta = \sqrt{n/t}\log n$:
\begin{align*}
  \left( n^{1/4}t^{1/4}\right)
  \left( \frac{n^{1/4} t^{1/4}}{\log n} \right)^{k} \left(\frac{\log n}{n^{1/2}t^{1/2}} \right)^{(k-1)} < 1.
\end{align*}
Simplify to
\begin{align*}
  (nt)^{\frac{1}{4} + \frac{k}{4} - \frac{k-1}{2}} \log^{-1} n < 1.
\end{align*}
When $k \geq 3$, the exponent on $nt$ is non-positive so the expression is true (even when we add back in
the constants that got dropped.)

Thus we can conclude that for $k \geq 3$ and $m = 2k\theta = 2k \sqrt{n/t}\log n$,
$r_k(K_{2,t};K_m) > n$.
Solving for $n$ in terms of $m$ we obtain $r_k(K_{2,t};K_m) = \Omega(m^2t/\log^2(mt))$,
proving \refkthreelower.

\end{onlyarxiv}
\end{document}